\newcommand \nc{\newcommand}
\newtheorem{theorem}{Theorem}[section]
\newtheorem{lemma}[theorem]{Lemma}
\newtheorem{proposition}[theorem]{Proposition}
\newtheorem{corollary}[theorem]{Corollary}
\newtheorem{remark}[theorem]{Remark}
\nc{\ba}{\begin{array}}\nc{\ea}{\end{array}}
\nc{\be}{\begin{eqnarray}}\nc{\ee}{\end{eqnarray}}
\nc{\beq}{\begin{equation}}\nc{\eeq}{\end{equation}}
\nc{\bex}{\begin{eqnarray*}}\nc{\eex}{\end{eqnarray*}}
\nc{\btm}{\begin{theorem}} \nc{\etm}{\end{theorem}}
\nc{\blm}{\begin{lemma}} \nc{\elm}{\end{lemma}}
\newcommand{\sihao}{\fontsize{14pt}{\baselineskip}\selectfont}      
\nc{\R}{\mathbb{R}} \nc{\va}{\varepsilon} \nc{\ls}{\limits}
\def\pf{\noindent{\bf Proof.\quad}}
\begin{document}
\title{\sihao Optimal time-decay estimates for an Oldroyd-B model with zero viscosity}

\author[a]{Jinrui Huang}
\author[b]{Yinghui Wang}
\author[b]{Huanyao Wen\thanks{Corresponding author.\\ $\quad\quad\quad$ E-mail addresses: huangjinrui1@163.com (J. Huang), yhwangmath@163.com (Y. Wang), mahywen@scut.edu.cn (H. Wen), rzz@mail.ccnu.edu.cn (R. Zi).}}
\author[c]{Ruizhao Zi}
\affil[a]{School of Mathematics and Computational Science, Wuyi University, Jiangmen 529020, China. }
\affil[b]{School of Mathematics, South China University of Technology, Guangzhou 510641, China.}
\affil[c]{School of Mathematics and Statistics $\&$ Hubei Key Laboratory of Mathematical  Sciences,  Central China Normal University, Wuhan 430079, China.}

\date{}
\maketitle

\begin{abstract}
In this work, we consider the Cauchy problem for a diffusive Oldroyd-B model in three dimensions. Some optimal time-decay rates of the solutions are derived via analysis of upper and lower time-decay estimates provided that the initial data are small and that the absolute value of Fourier transform of the initial velocity is bounded below away from zero in a low-frequency region. It is worth noticing that the optimal rates are independent of the fluid viscosity or the diffusive coefficient, which is a different phenomenon from that for incompressible Navier-Stokes equations. 

\end{abstract}

{\bf\noindent keywords.} Oldroyd-B model, Cauchy problem, optimal time-decay estimates, zero viscosity.

{\noindent{\it\bf AMS Subject Classification (2010)}: 35Q35, 74H40, 76A10.}

\tableofcontents

\setcounter{section}{0} \setcounter{equation}{0}
\section{Introduction}

In this paper, we consider the Cauchy problem for a diffusive Oldroyd-B system describing the motion of viscoelastic fluids:
\begin{eqnarray} \label{system}
\begin{cases}
     \partial_tu+u\cdot\nabla u+\nabla p-\epsilon\Delta u=\kappa {\rm div}\tau,\\
     \partial_t\tau+u\cdot\nabla\tau-\mu\Delta\tau+\beta\tau=Q(\nabla u,\tau)+\alpha\mathbb{D}u,\\
     {\rm div}u=0,\\
     (u,\tau)(x,0)=(u_0,\tau_0),
\end{cases}
\end{eqnarray} on $\mathbb{R}^3\times (0,\infty)$. Here we denote by $u=(u_1,u_2,u_3)^\top:\mathbb{R}^3\rightarrow\mathbb{R}^3$ the velocity field of fluid, $\tau\in \mathbb{S}_3(\mathbb{R})$ is the purely elastic (the polymer) part of the stress tensor, $p\in\mathbb{R}$ is the pressure function of the fluid, $\mathbb{D}u=\frac12\left(\nabla u+\left(\nabla u\right)^\top\right)$ is the deformation tensor, and $$Q(\nabla u,\tau)=\Omega\tau-\tau\Omega+b(\mathbb{D}u \tau+\tau\mathbb{D}u)$$ admits the usual bilinear form  with the skew-symmetric part of velocity gradient $\Omega=\frac12\left(\nabla u-\left(\nabla u\right)^\top\right)$ and some $b\in [-1,1]$. The parameters $\epsilon$, $\kappa$, $\mu$, $\beta$ and $\alpha$ satisfy that $\epsilon, \mu\geq0, \kappa,\beta,\alpha>0$. 

Note that the diffusive term $\mu\Delta\tau$ corresponds to a spatial diffusion of the polymeric stresses where $\mu$ is usually called the center-of-mass diffusion coefficient. It is known that this coefficient is significantly smaller than other effects \cite{Bhave1}, and thus this term is usually ignored in the Oldroyd-B model (non-diffusive model). However, the center-of-mass diffusion effects can be physically justified 
{ 
 to model the shear and vorticity banding phenomena (\cite{Bhave2,Cates_2006,Dhont_2008,El-Kareh, Liu,Malek_etal_2018}). As showed by M\'{a}lek et al. \cite{Malek_etal_2018}, the stress diffusion term can be interpreted either as a consequence of a nonlocal energy storage mechanism or as a consequence of a nonlocal entropy production mechanism.} 
Thus the diffusive Oldroyd-B system has attracted much attention and been studied extensively, see \cite{Barrett-Boyava-M3-11, Barrett-Lu-Suli, Chupin, Constantin-Kliegl, Constantin-3, Elgindi-Liu, Elgindi-Rousset, La, Lu-Zhang, Wang-Wen}.

Our main goal here is to investigate the optimal time-decay estimates including the lower and upper estimates, and more importantly to study dependence of the optimal rates on the fluid viscosity $\epsilon$ and the center-of-mass diffusion coefficient $\mu$, which is divided into two cases, namely,
 \begin{itemize}
\item Case I: $\mu>0$ and $\epsilon\ge0$;

\item Case II: $\epsilon>0$ and $\mu\ge0$.
\end{itemize}
Without loss of generality, we assume that $\mu, \epsilon\le1$ throughout the rest of the paper. In what follows, we introduce some related works.

\subsection*{Non-diffusive Oldroyd-B model}
 The non-diffusive incompressible Oldroyd-B model (i.e., (\ref{system}) with $\mu=0$) was first proposed by Oldroyd in 1958 (\cite{Oldroyd}), which obeys an invariant constitutive law describing the general motion of an isotropic, incompressible visco-elastic liquid with significant non-Newtonion effects such as the Weissenberg climbing effect. An early existence result for
the model was obtained by Guillop${\rm \acute{e}}$ and Saut (\cite{Guillop-1}). They
obtained the global existence and uniqueness of strong solution with small initial data and a small coupling constant (like $\alpha$ in (\ref{system}) with $\mu=0$) in the Sobolev space $H^s(\Omega)$ where $\Omega$ is a bounded domain of $\mathbb{R}^3$. Fern\'andez-Cara, Guillen and Ortega (\cite{Fernandez-Guillpen-Ortega}) extended this results to the $L^p$-setting. Later on, the smallness restriction on the
coupling constant was removed by Molinet and Talhouk (\cite{Molinet-Talhouk}). In \cite{Hieber-Naito-Shibata}, Hieber, Naito and Shibata obtained a
global existence and uniqueness of the solution in exterior domains provided the initial data and the coupling constant are sufficiently
small. Fang, Hieber and Zi (\cite{Fang-Hieber-Zi}) extended the work \cite{Hieber-Naito-Shibata} to the case without any smallness assumption on the coupling constant. Note that the global existence theory for the non-diffusive model with arbitrarily large initial data is still open. However, for a special case that fluid flows do not stretch polymers but allow rotation, namely, $Q(\nabla u,\tau)=\Omega\tau-\tau\Omega$ in the non-diffusive model, Lions and Masmoudi (\cite{Lions-Masmoudi}) obtained the global existence of weak solution with arbitrarily large initial data. In this case, the model is usually called the corotational model which enjoys better {\it a priori} estimates due to the simplification of $Q(\nabla u,\tau)$.
{ 
For the upper-convected case (i.e.  $b = 1, ~Q(\nabla u,\tau) = \nabla u\tau + \tau (\nabla u^T)$), Hu and Leli\`{e}vre (\cite{Hu_Lelievre})
derived some new entropy estemates for the Oldroyd-B model and some related models. Concretely, they study the following nondimensional system, 
\begin{equation}\label{Oldroyd_B_nondim}
  \begin{cases}
    u_t + (u\cdot\nabla) u + \frac{1}{\mathbf{Re}} \nabla p - \frac{1-\omega}{\mathbf{Re}}\Delta u = \frac{1}{\mathbf{Re}}\mathrm{div} \tau,\\
    \tau_t + (u\cdot\nabla)\tau -\nabla u\tau -\tau (\nabla u)^{T} +\frac{1}{\mathbf{We}}\tau = \frac{\omega}{\mathbf{We}}(\nabla u + (\nabla u)^T),
  \end{cases}
\end{equation}
where $\omega\in (0,1)$ is a coupling constant,~$\mathbf{Re}$ is Reynold number and $\mathbf{We}$ is Weissenberg number.
Note that if we set $\epsilon = \frac{1-\omega}{\mathbf{Re}}, \kappa = \frac{1}{\mathbf{Re}}, \beta = \frac{1}{\mathbf{We}},\alpha = \frac{2\omega}{\mathbf{We}}$ and $\mu = 0,$ system \eqref{system} becomes \eqref{Oldroyd_B_nondim}.
By introducing the so-called conformation tensor, $A = \frac{We}{\omega}\tau + I$, the authors transform \eqref{Oldroyd_B_nondim} into the following system,
\begin{equation}\label{Oldroyd_B_nondim_A}
  \begin{cases}
    u_t + (u\cdot\nabla) u + \frac{1}{\mathbf{Re}} \nabla p - \frac{1-\omega}{\mathbf{Re}}\Delta u = \frac{\omega}{\mathbf{ReWe}}\mathrm{div}  A ,\\
    A_t + (u\cdot\nabla)A -\nabla uA -A (\nabla u)^{T}+\frac{1}{\mathbf{We}} (A-I)=0.
  \end{cases}
\end{equation}
We remark that \eqref{Oldroyd_B_nondim_A} can also be deduced from the dumbbell model
of polymeric fluids (refer to E, Li and Zhang (\cite{E_Li_Zhang_2004}) for instence).
Hu and Leli\`{e}vre (\cite{Hu_Lelievre}) obtained the following entropy estimate, for the Dirichlet problem in bounded domains,
\begin{equation}
  \begin{split}
  &\frac{\mathrm{d}}{\mathrm{d}t} F(u,A)+ (1-\omega)\int_D|\nabla u|^2 \mathrm{d}x+ \frac{\omega}{2 \mathbf{We}^2}\int_D \mathrm{tr}(A + A^{-1} -2 I)\mathrm{d}x = 0,
  \end{split}
\end{equation}
where 
\begin{equation*}F(u,A) = \frac{\mathbf{Re}}{2}\int_D |u|^2\mathrm{d}x + \frac{\omega}{2\mathbf{We}}\int_D (\mathrm{tr}A -\log(\mathrm{det}A) - 3)\mathrm{d}x
\end{equation*} is  the sum of the kinetic energy and the Helmholtz free energy (see equation $(3.13)$ in \cite{Hu_Lelievre}).
If one assume that the initial data $A_0$ is symmetric positive definite, it is easy to show that $A(t)$ is symmetric positive definite and $\mathrm{tr}A -\log(\mathrm{det}A) - 3\geq 0$. Using the fact that $\mathrm{tr}A -\log(\mathrm{det}A) - 3 = \mathrm{tr}(A -\log( A) - I) \leq \mathrm{tr}(A + A^{-1} -2 I)$ (see Lemma 1.1 and Lemma 2.1 in \cite{Boyaval_etal_2009} for detials), one can deduce the exponential time decay estimates for the free energy $F(u,A)$. Boyaval,  Leli\`evre and Mangoubi (\cite{Boyaval_etal_2009}) analyzed the  stability of some numerical schemes for system \eqref{Oldroyd_B_nondim_A}.
For the long-time behavior of the solution of system \eqref{system} with $\epsilon>0$ and $\mu = 0$ in $\mathbb{R}^3$, Hieber, Wen and Zi (\cite{Hieber-Wen-Zi}) obtained some upper time-decay estimates, and the decay rates are the same as those for the heat equation.
}

For the case without damping mechanism and the scaling invariant approach, see the works \cite{Zhu} and \cite{Chen, Fang-Zi, Fang-Zhang-Zi}, respectively. An energetic variational approach was first introduced by Lin, Liu, and Zhang (\cite{Lin-Liu-Zhang}) to understand the physical structure for the related systems (see for instance \cite{Cai, Hu-Lin, Hu-Wu, Lai, Lei1, Lei2, Lin, Lin-Zhang, Zhang-Fang} for more progress).

\subsection*{Diffusive Oldroyd-B model}
As mentioned in the beginning, the physical consideration for the diffusive Oldroyd-B model can be referred to \cite{Bhave1, Bhave2, El-Kareh, Liu, Malek_etal_2018}.
{ 
Inspired by the work of Rajagopal, Srinivasa (\cite{Rajagopal_2000}) and Ziegler, Wehrli (\cite{Ziegler_1987}), M\'{a}lek et al. (\cite{Malek_etal_2018}) derived variants of Oldroyd-B models with stress diffusion for viscoelastic rate-type fluids, using the specific Helmholtz free energy, the entropy production and the concept of evolving natural configuration, in both incompressible and compressible framework. We remark that the diffusive Oldroyd-B model can also be  derived  as a macroscopic closure of the Fokker–Planck–Navier–Stokes systems (refer to \cite{Barrett_Suli_2018} and \cite{La} for details).
}In the case $\epsilon>0$ and $\mu>0$, The existence of global-in-time weak solutions in two or three dimensions was obtained by Barrett, Boyaval, and S${\rm \ddot{u}}$li (\cite{Barrett-Boyava-M3-11, Barrett-Suli-M3-11}). For the regular solutions, Constantin and Kliegl (\cite{Constantin-Kliegl}) obtained the global existence and uniqueness of strong solutions to the two-dimensional model with varied polymer number density. La (\cite{La}) gave a rigorous derivation of the model in two dimensions as a macroscopic closure of a micro-macro model. 
{ 
By using a similiar free energy estimates in \cite{Hu_Lelievre}, Bathory, Bul\'{i}\v{c}ek and M\'{a}lek (\cite{Bathory_etal}) proved the existence of weak solutions for a generalized rate-type viscoelastic fluids with stress difusion in three-dimensional bounded domains.
Dostal\'{i}k, Pr\r{u}\v{s}a and Stein (\cite{Dostalik_etal_2020}) studied the large time behaviors for the related viscoelastic models with the free energy estimates in three-dimensional vessel.
}
For compressible case, please refer to \cite{Barrett-Lu-Suli, Lu-Zhang, Wang-Wen} for global existence of weak solutions, weak-strong uniqueness and conditional regularity, and optimal time-decay estimates, respectively.

In the case $\epsilon=0$ and $\mu>0$, i.e., the fluid viscosity vanishes, the global existence and uniqueness of regular solutions was proved by Elgindi and Rousset (\cite{Elgindi-Rousset}) in two dimensions provided that the initial data are arbitrary when $Q=Q(\nabla u,\tau)$ is ignored and that the initial data are small when $Q\not=0$. Later on, Elgindi and Liu (\cite{Elgindi-Liu}) extended the result in \cite{Elgindi-Rousset} to the three-dimensional case with small initial data in $H^3(\mathbb{R}^3)$-norm. Note that in this case the velocity of fluid obeys Euler system without damping whose global existence theory is unknown. \cite{Elgindi-Liu} and \cite{Elgindi-Rousset} demonstrate that the coupling effect of (\ref{system}) for $\epsilon=0$ and $\mu>0$ can make the solutions exist globally in time.

\bigskip

In summary, the previous works demonstrate what follows.
\begin{itemize}
\item For the non-diffusive Oldroyd-B model (i.e., $\mu=0$ and $\epsilon>0$), some global well-posedness results and upper time-decay rates have been obtained where the decay estimate is given by
    \be\label{HWZ}\|(\nabla^ku,\nabla^k\tau)(t)\|_{L^2(\mathbb{R}^3)}\le C(1+t)^{-\frac{3}{4}-\frac{k}{2}}\ee for $k=0,1,2$ and some known constant $C$ independent of $t$. However, the lower time-decay estimates are unknown.

\item For the diffusive Oldroyd-B model with zero viscosity (i.e., $\mu>0$ and $\epsilon=0$), there have been global well-posedness results and however time-decay estimates are unknown.
\end{itemize}


\subsection{Main results}
Denote by $\hat{f}(\xi)$ the Fourier transform of $f(x)$ throughout the paper. Now we are in a position to state our main results.

\begin{theorem}(Global existence)  \label{theorem} Assume that $(u_0,\tau_0)\in H^3(\mathbb{R}^3)$. For any given $\epsilon$ and $\mu$ satisfying the Case I or Case II, then there exists a sufficiently small constant $\varepsilon_0>0$  such that the Cauchy problem (\ref{system}) admits a unique global solution $(u^{\epsilon,\mu},\tau^{\epsilon,\mu})\in C([0,+\infty);H^3(\mathbb{R}^3))$ satisfying
\begin{equation}\label{1.2}
\begin{split}
     &\|u^{\epsilon,\mu}(t)\|_{H^3(\mathbb{R}^3)}^2+\int_0^t\left(\epsilon\|\nabla u^{\epsilon,\mu}(s)\|_{H^3(\mathbb{R}^3)}^2+\|\nabla u^{\epsilon,\mu}(s)\|_{H^2(\mathbb{R}^3)}^2\right)ds\leq C_1\varepsilon_0^2,
     \\&
     \|\tau^{\epsilon,\mu}(t)\|_{H^3(\mathbb{R}^3)}^2
     +\int_0^t\left(\mu\|\nabla\tau^{\epsilon,\mu}(s)\|_{H^3(\mathbb{R}^3)}^2+\|\tau^{\epsilon,\mu}(s)\|_{H^3(\mathbb{R}^3)}^2\right)ds\leq C_1\varepsilon_0^2
\end{split}
\end{equation} for $t\geq0$, provided that \begin{eqnarray*}
    \|u_0\|_{H^3(\mathbb{R}^3)}+\|\tau_0\|_{H^3(\mathbb{R}^3)}\leq \varepsilon_0,
\end{eqnarray*} where $\varepsilon_0$ is a constant depending on $\mu$ (or $\epsilon$) and some other known constants but independent of $\epsilon$ (or $\mu$) and $t$ for Case I (or for Case II), and the positive constant $C_1$ may depend on $\mu$ (or $\epsilon$) but independent of $\epsilon$ (or $\mu$) and $t$ for Case I (or for Case II).
\end{theorem}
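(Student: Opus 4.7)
The strategy is the classical local well-posedness plus continuity argument. Local existence of a solution in $C([0,T];H^3(\mathbb{R}^3))$ follows by a Friedrichs-type approximation standard for this hyperbolic-parabolic system. The heart of the matter is to derive an a priori bound matching (\ref{1.2}) whose dissipative integral is independent of $\epsilon$ in Case I (and of $\mu$ in Case II). Once that is in hand, the smallness hypothesis $\|(u_0,\tau_0)\|_{H^3}\le\varepsilon_0$ allows a bootstrap argument to extend the solution to $[0,\infty)$.

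For the baseline energy estimate I apply $\nabla^k$ $(0\le k\le 3)$ to the system and pair $\nabla^k$ of the $u$-equation against $\nabla^k u$ and $\nabla^k$ of the $\tau$-equation against $\tfrac{\kappa}{\alpha}\nabla^k\tau$. Integration by parts cancels the linear coupling between $\kappa\,{\rm div}\,\tau$ and $\alpha\mathbb{D}u$ (using $\mathbb{D}u:\tau=\nabla u:\tau$ for symmetric $\tau$), and the rotational part $\Omega\tau-\tau\Omega$ of $Q$ vanishes against the symmetric $\tau$. This yields
\begin{equation*}
\tfrac{1}{2}\tfrac{d}{dt}\Bigl(\|u\|_{H^3}^2+\tfrac{\kappa}{\alpha}\|\tau\|_{H^3}^2\Bigr)+\epsilon\|\nabla u\|_{H^3}^2+\tfrac{\kappa\mu}{\alpha}\|\nabla\tau\|_{H^3}^2+\tfrac{\kappa\beta}{\alpha}\|\tau\|_{H^3}^2\le C\mathcal{N},
\end{equation*}
with $\mathcal{N}$ collecting transport commutators and the $b$-part of $Q$, all bounded by Sobolev embedding as $\|(u,\tau)\|_{H^3}$ times dissipative norms. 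In Case II the term $\epsilon\|\nabla u\|_{H^3}^2$ already supplies velocity dissipation and the bootstrap closes directly for small data.

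The real difficulty lies in Case I with $\epsilon$ possibly zero: no dissipation is visible for $\nabla u$. To recover it I introduce the cross-functional
\begin{equation*}
\mathcal{I}_k(t)=\int_{\mathbb{R}^3}\nabla^k u\cdot \nabla^k\,{\rm div}\,\tau\,dx,\qquad k=0,1,2,
\end{equation*}
where the pressure is eliminated by first applying the Leray projection to the $u$-equation. Substituting the equations, using the identity ${\rm div}\,\mathbb{D}u=\tfrac{1}{2}\Delta u$ valid for divergence-free $u$, and integrating by parts yields
\begin{equation*}
\tfrac{d}{dt}\mathcal{I}_k+\tfrac{\alpha}{2}\|\nabla^{k+1}u\|_{L^2}^2\le C\bigl(\|\nabla^{k}\tau\|_{H^1}^2+\mu^2\|\nabla^{k+2}\tau\|_{L^2}^2\bigr)+\mathcal{N}_k.
\end{equation*}
Since $\mu\le 1$, the offending term obeys $\mu^2\|\nabla^{k+2}\tau\|_{L^2}^2\le\mu\|\nabla\tau\|_{H^3}^2$ for $k\le 2$, so the entire right-hand side is dominated by the $\tau$-dissipation already present (this is the step where the positivity of $\mu$ in Case I is essential). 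Adding $\eta\sum_{k=0}^{2}\mathcal{I}_k$ to the baseline energy with $\eta$ small produces a Lyapunov functional equivalent to $\|(u,\tau)\|_{H^3}^2$ whose time derivative controls $\|\nabla u\|_{H^2}^2+\epsilon\|\nabla u\|_{H^3}^2+\mu\|\nabla\tau\|_{H^3}^2+\|\tau\|_{H^3}^2$, matching (\ref{1.2}) uniformly in $\epsilon$.

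The closure is routine: the nonlinearities $u\cdot\nabla u$, $u\cdot\nabla\tau$ and $Q(\nabla u,\tau)$ are estimated by H\"older, the embedding $H^2(\mathbb{R}^3)\hookrightarrow L^\infty$ and Gagliardo-Nirenberg, producing factors of $\|(u,\tau)\|_{H^3}$ times the dissipative norms just displayed. For $\varepsilon_0$ sufficiently small the nonlinear contribution is strictly sub-dominant, the a priori estimate closes, and a standard continuation argument extends the local solution globally. The principal obstacle is precisely the loss of viscous dissipation in Case I with $\epsilon=0$; it is resolved by the cross-term functional $\mathcal{I}_k$, whose only debt (the top $\tau$-derivative $\mu^2\|\nabla^{k+2}\tau\|^2$) is exactly what the hypothesis $\mu>0$ of Case I makes affordable.
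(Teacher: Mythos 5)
Your strategy is essentially the paper's own: a weighted $H^3$ energy identity in which the linear coupling cancels, plus a cross functional pairing $u$ with $\mathrm{div}\,\tau$ to extract $\|\nabla u\|_{H^2}^2$ dissipation from the term $\alpha\mathbb{D}u$ via $\mathrm{div}\,\mathbb{D}u=\tfrac12\Delta u$. Your $\mathcal{I}_k=\langle\nabla^k u,\nabla^k\mathrm{div}\,\tau\rangle$ ($k=0,1,2$) is, up to commuting $\Lambda$ and the Leray projector, the same object as the paper's $\sum_{k=1}^{3}\langle\Lambda^{k-1}\sigma,\Lambda^k u\rangle$ with $\sigma=\Lambda^{-1}\mathbb{P}\mathrm{div}\,\tau$, and your absorption of the debt $\mu^2\|\nabla^{k+2}\tau\|_{L^2}^2\le\mu\|\nabla\tau\|_{H^3}^2$ is exactly how the paper treats the corresponding term $J_4$.

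Two points are glossed over, one of which is a genuine gap in the write-up. First, the minor one: your displayed inequality for $\tfrac{d}{dt}\mathcal{I}_k$ omits the linear contribution $\epsilon\langle\nabla^k\Delta u,\nabla^k\mathrm{div}\,\tau\rangle$, which is present in Case I whenever $\epsilon>0$ and is not dominated by $C\bigl(\|\nabla^k\tau\|_{H^1}^2+\mu^2\|\nabla^{k+2}\tau\|_{L^2}^2\bigr)$; it must be split as $\epsilon_1\|\tau\|_{H^3}^2+C\epsilon_1^{-1}\epsilon\cdot\epsilon\|\nabla u\|_{H^3}^2$ and absorbed by the damping and by the viscous dissipation $\alpha\epsilon\|\nabla u\|_{H^3}^2$ from the baseline energy (the paper's treatment of $J_2$). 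Second, and more seriously: in the baseline energy at top order, $\langle\nabla^3 Q(\nabla u,\tau),\nabla^3\tau\rangle$ contains terms of the type $\|\tau\|_{L^\infty}\|\nabla^4 u\|_{L^2}\|\nabla^3\tau\|_{L^2}$, and when $\epsilon=0$ (the case you rightly identify as the hard one) the quantity $\|\nabla^4 u\|_{L^2}$ is \emph{not} a dissipative norm, so your blanket claim that $\mathcal{N}$ is ``bounded by Sobolev embedding as $\|(u,\tau)\|_{H^3}$ times dissipative norms'' fails as stated. One must first integrate by parts, writing $\langle\nabla^{k-1}Q,\nabla^{k-1}\Delta\tau\rangle$ so that the extra derivative lands on $\tau$ and is paid for by $\mu\|\nabla\tau\|_{H^3}^2$ — again the hypothesis $\mu>0$ of Case I is what makes this affordable, exactly as in the paper's estimate of $\mathcal{N}_3$. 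The observation that the corotational part $\Omega\tau-\tau\Omega$ pairs to zero against symmetric $\tau$ helps only at leading order and does not remove the $b$-part $b(\mathbb{D}u\,\tau+\tau\mathbb{D}u)$, which carries the same $\nabla^4 u$ difficulty. With these two repairs the argument closes as you describe.
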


\begin{theorem}(Optimal time-decay rates)\label{theorem_decay}
	For any given $\epsilon$ and $\mu$ satisfying the Case I or Case II, letting $(u^{\epsilon,\mu},\tau^{\epsilon,\mu})$ be the solution as in Theorem \ref{theorem}, then the following optimal time-decay estimates hold.

	\medskip

	\noindent {\em (i)} Assume that $ { (\hat{u}_0,\hat{\tau}_0)\in L^\infty(\mathbb{R}^3)}$. Then we have upper time-decay estimates of the solution as below:
	\begin{eqnarray}\label{opti1}
	\ \|\nabla^ku^{\epsilon,\mu}(t)\|_{L^2}\leq C_2(1+t)^{-\frac34-\frac{k}{2}},\ k=0,1,2,
	\end{eqnarray}
	and
	\begin{eqnarray}\label{opti2}
	\ \|\nabla^{k_1}\tau^{\epsilon,\mu}(t)\|_{L^2}\leq C_2(1+t)^{-\frac54-\frac{k_1}{2}},\ k_1=0,1
	\end{eqnarray}
	for any $t\geq0$ and generic positive constant $C_2$ which depends only on $\|(u_0,\tau_0)\|_{H^3}$, ${ \|(\hat{u}_0,\hat{\tau}_0)\|_{L^\infty_\xi}}$ and $C_1$.
	
\medskip

	\noindent {\em(ii)} Assume that $(\hat{u}_0,\hat{\tau}_0)\in L^\infty(\mathbb{R}^3) $ and in addition that $\inf_{0\leq |\xi|\leq R}|\hat{u}_0|\geq c_{0}>0$, for some $R>0$. Then there exist a positive time $t_0=t_0(\alpha,\kappa,\beta,\|(\hat{u}_0,\hat{\tau}_0)\|_{L^\infty_\xi})$ and a positive generic constant $c=c(\alpha,\kappa,\beta,c_0,C_2)$, such that
	\begin{eqnarray}\label{opti3}
	\|\nabla^ku^{\epsilon,\mu}(t)\|_{L^2}\geq c(1+t)^{-\frac34-\frac{k}{2}},\ k=0,1,2,
	\end{eqnarray}
	and
	\begin{eqnarray}\label{opti4}
	\|\nabla^{k_1}\tau^{\epsilon,\mu}(t)\|_{L^2}\geq c(1+t)^{-\frac54-\frac{k_1}{2}},\ k_1=0,1
	\end{eqnarray}
	for any $t\geq t_0$.

\end{theorem}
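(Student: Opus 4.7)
The plan is to analyze \eqref{system} in Fourier variables as a perturbation of its linearization at zero, combining an almost-diagonalization of the linear semigroup with Duhamel's formula and the energy bounds of Theorem~\ref{theorem}. After applying the Leray projector $\mathbb{P}$ to the momentum equation and Fourier-transforming in $x$, the linearized $(\hat{u},\hat{\tau})$-system is driven by a symbol matrix $A(\xi)$ whose low-frequency spectrum is controlled by $\alpha,\beta,\kappa$ rather than by $\epsilon,\mu$: since $\beta>0$ dominates the $\tau$-equation near $\xi=0$, a quasi-stationary reduction gives $\hat{\tau}\approx\frac{i\alpha}{2\beta}(\xi\otimes\hat{u}+\hat{u}\otimes\xi)$, and substituting back yields an effective diffusion of strength $\sim\alpha\kappa/(2\beta)$ in the $u$-equation. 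Making this precise by an almost-diagonalization of $A(\xi)$ on $|\xi|\le R_0(\alpha,\beta,\kappa)$ and using the damping $\beta$ together with whichever of $\epsilon,\mu$ is positive in the complementary region, one obtains, uniformly in $(\epsilon,\mu)\in[0,1]^2$, the pointwise bounds
\[
|\hat{u}_L(\xi,t)|\le Ce^{-c\min(|\xi|^2,1)t}\bigl(|\hat{u}_0|+|\hat{\tau}_0|\bigr),\qquad |\hat{\tau}_L(\xi,t)|\le Ce^{-c\min(|\xi|^2,1)t}\bigl(|\xi||\hat{u}_0|+|\hat{\tau}_0|\bigr).
\]
Hausdorff--Young then gives the linear decay $\|\nabla^k u_L(t)\|_{L^2}\les(1+t)^{-\frac34-\frac{k}{2}}\|U_0\|_{L^1\cap H^k}$ and the analogous bound for $\tau_L$ with an extra $(1+t)^{-\frac12}$, the $|\xi|$ factor in the quasi-stationary relation being responsible for the faster decay of $\tau$.

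For the upper bounds of part~(i), write $U(t)=e^{tL}U_0+\int_0^t e^{(t-s)L}\mathcal{N}(U)(s)\,ds$ with $U=(u,\tau)$ and $\mathcal{N}=\bigl(-\mathbb{P}(u\cdot\nabla u),\,-u\cdot\nabla\tau+Q(\nabla u,\tau)\bigr)$. The global energy bound $\|U\|_{H^3}\les\varepsilon_0$ from Theorem~\ref{theorem} together with Sobolev embedding gives $\|\mathcal{N}(U)\|_{L^1}\les\|U\|_{L^2}\|\nabla U\|_{L^2}$ and $\|\mathcal{N}(U)\|_{L^2}\les\varepsilon_0\|\nabla U\|_{L^2}$. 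Setting
\[
\mathcal{M}(t)=\sup_{0\le s\le t}\Bigl(\sum_{k=0}^{2}(1+s)^{\frac34+\frac{k}{2}}\|\nabla^k u(s)\|_{L^2}+\sum_{k_1=0}^{1}(1+s)^{\frac54+\frac{k_1}{2}}\|\nabla^{k_1}\tau(s)\|_{L^2}\Bigr),
\]
a standard bootstrap using the linear decay on $e^{tL}$ and the splitting $\int_0^t=\int_0^{t/2}+\int_{t/2}^t$ closes at $\mathcal{M}(t)\les\|U_0\|_{L^1\cap H^3}+\varepsilon_0\mathcal{M}(t)+\mathcal{M}(t)^2$, yielding \eqref{opti1}--\eqref{opti2} once $\varepsilon_0$ is small.

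For the lower bounds of part~(ii), the hypothesis $\inf_{|\xi|\le R}|\hat{u}_0(\xi)|\ge c_0$ combined with the low-frequency asymptotics of $A(\xi)$ gives
\[
\|\nabla^k u_L(t)\|_{L^2}^2\ge c\int_{|\xi|\le\min(R,R_0)}|\xi|^{2k}e^{-C|\xi|^2 t}|\hat{u}_0(\xi)|^2\,d\xi\ge c\,c_0^2\,(1+t)^{-\frac32-k},
\]
and the quasi-stationary relation $\hat{\tau}_L\approx\frac{i\alpha}{2\beta}(\xi\otimes\hat{u}_0+\hat{u}_0\otimes\xi)e^{-c|\xi|^2 t}$ yields $\|\nabla^{k_1}\tau_L(t)\|_{L^2}\ge c(1+t)^{-\frac54-\frac{k_1}{2}}$. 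It then suffices to show the Duhamel nonlinear contribution is $o$ of the linear lower bound: plugging the upper bounds of part~(i) into $\|\int_0^t e^{(t-s)L}\mathcal{N}(U)(s)\,ds\|_{L^2}$, using the linear decay together with a $\int_0^{t/2}+\int_{t/2}^t$ split, produces an $O((1+t)^{-\frac34-\frac{k}{2}-\delta})$ bound for some $\delta>0$, so for $t\ge t_0$ large the linear part dominates and the reverse triangle inequality yields \eqref{opti3}--\eqref{opti4}.

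The main obstacle is the uniform-in-$(\epsilon,\mu)$ linear analysis when one of the dissipative parameters vanishes. In Case~I with $\epsilon=0$ the linearized momentum equation is Euler-type, so the parabolic decay of $u$ must be produced entirely by the coupling with the $\beta\tau$ damping; the symmetric situation arises in Case~II with $\mu=0$. One therefore cannot work with $u$ and $\tau$ separately but must carry out the almost-diagonalization of the full symbol matrix $A(\xi)$ on all of $\R^3_\xi$, with error bounds uniform in $(\epsilon,\mu)\in[0,1]^2$. A related subtlety in the nonlinear stage is that the absence of viscous smoothing on one variable forces one to exploit the extra $(1+t)^{-1/2}$ decay of $\tau$ when controlling quadratic terms involving $\nabla u$, which is ultimately why the lower bounds only hold for $t\ge t_0$ with $t_0$ depending on $\alpha,\beta,\kappa$ and $\|(u_0,\tau_0)\|_{L^1}$.
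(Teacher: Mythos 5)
Your overall architecture (uniform-in-$(\epsilon,\mu)$ low-frequency spectral analysis, Duhamel plus bootstrap for the upper bounds, linear lower bound plus showing the nonlinear remainder is smaller) parallels the paper, but the quantitative ingredient that makes the $\tau$-rates work is missing. Your stated pointwise bound $|\hat{\tau}_L(\xi,t)|\le Ce^{-c\min(|\xi|^2,1)t}\bigl(|\xi||\hat{u}_0|+|\hat{\tau}_0|\bigr)$ puts the gain factor $|\xi|$ only on the $\hat u_0$ part; the $\hat\tau_0$ part as written decays only like $e^{-c|\xi|^2t}$, which yields merely $(1+t)^{-\frac34-\frac{k_1}{2}}$, so neither the claimed extra $(1+t)^{-\frac12}$ for $\tau_L$ nor, more seriously, the lower bound \eqref{opti4} follows: in the reverse triangle inequality the $\hat\tau_0$-driven contribution would dominate the $(1+t)^{-\frac54-\frac{k_1}{2}}$ main term coming from $\hat u_0$. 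What is needed (and is the paper's key observation, Lemma \ref{lemma_lower}, estimate \eqref{lemma_lower_eq2}) is that the $\tau_0\to\tau$ (equivalently $\mathcal{F}_2\to\sigma$) propagator gains either a factor $|\xi|^2$ or exponential decay, $|\mathcal{G}_2(\xi,t)|\lesssim |\xi|^2e^{-\theta|\xi|^2t}+e^{-\beta t/2}$ for $|\xi|\le R$; your quasi-stationary ``$\approx$'' asserts this morally but never quantifies the error, and the displayed bound you actually state is too weak to run the argument.

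The same omission breaks the nonlinear step of part (ii) for $\tau$. Your claim that the Duhamel contribution is $O\bigl((1+t)^{-\frac34-\frac{k}{2}-\delta}\bigr)$ suffices for $u$ (there $\delta=\frac12$ as in \eqref{nonlinear_lower_3}), but for $\tau$ you need it to be $o\bigl((1+t)^{-\frac54-\frac{k_1}{2}}\bigr)$, and with only the crude bound $Ke^{-\theta|\xi|^2(t-s)}$ on the propagator the piece $\mathcal{G}_2\hat{\mathcal{F}}_2$ — where $\mathcal{F}_2$ contains $Q(\nabla u,\tau)$ and decays no better than $(1+s)^{-\frac52}$ in the relevant norms — only gives $O\bigl((1+t)^{-\frac34-\frac{k_1}{2}}\bigr)$, which swamps the linear term; this is exactly the difficulty the paper flags (``$Q(\nabla u,\tau)$ does not decay fast enough'') and resolves via \eqref{lemma_lower_eq2} in \eqref{nonlinear_lower_4}. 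A secondary remark on part (i): the paper does not run a pure Duhamel bootstrap on weighted norms of $U$; it uses an energy/Fourier-splitting hybrid in which the damping $\beta\tau$ and the cross term $\sum_k\langle\Lambda^{k-1}\sigma,\Lambda^k u\rangle$ supply the high-frequency dissipation (important since in Case I the semigroup does not smooth $u$, so closing your bootstrap at the $\nabla^2u$ level from Duhamel alone is delicate), and it then upgrades $\tau$ to the rates \eqref{opti2} by a separate damped energy estimate on the $\tau$-equation (Lemma \ref{lem-upper_bound3}); your proposal contains neither this device nor a substitute for it.
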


{ 
\begin{remark}
  Let $A = \tau + I$ be the conformation tensor, define the free energy entropic term by $ \int \mathrm{tr}(A - \log(A)- I)\mathrm{d}x$ (refer to \cite{Hu_Lelievre,Boyaval_etal_2009} for example).
  Note that for $\|\tau\|_{H^3}\ll 1,$
  \begin{equation*}
    \int \mathrm{tr}(A - \log(A) - I)\mathrm{d}x = \int \mathrm{tr}(\tau - \log (\tau+I))\mathrm{d}x =  \int \mathrm{tr}(\frac{\tau^2}{2} + O(\tau^3))\mathrm{d}x \simeq \|\tau\|_{L^2}^2,
  \end{equation*}
  (for the definition and properties of the matrix logarithm,  one can refer to Section 2.3 in \cite{Hall_2015}). Therefore, the entropic term $ \int \mathrm{tr}(A - \log(A)- I)\mathrm{d}x$ has the same decay estimate with $\|\tau\|_{L^2}^2.$ As a consequence, the free energy enjoys the same long time decay property with the kinetic energy.
\end{remark}
}
\begin{remark}
(\ref{opti1})-(\ref{opti4}) indicate that the time-decay rates are optimal and each of which is the same for both Case I and Case II, and that the decay speed of $\tau$ is faster than $u$ for the same order derivative. In particular, the optimal time-decay rates are independent of the fluid viscosity $\epsilon$ when $\mu>0$ is fixed, which is a different phenomenon from that for incompressible Navier-Stokes equations (cf. \cite{Schonbek1, Schonbek2} by Schonbek) and the incompressible Euler equations.
\end{remark}

\begin{remark} 
In Theorem \ref{theorem_decay}, we can not get optimal time-decay rates for the spatial derivative of the solution at the highest order in particular for Case I (zero viscosity), which is left open. However, for Case II the upper and lower time-decay rates of $\|\nabla^3u^{\epsilon,\mu}\|_{L^2}$ and $\|\nabla^2\tau^{\epsilon,\mu}\|_{L^2}$ can be improved to $-\frac94$ which are optimal (see Theorem \ref{theorem appendix}). This improves Hieber-Wen-Zi's result (see (\ref{HWZ})) in \cite{Hieber-Wen-Zi}.
\end{remark}
\begin{remark}\label{rem_u_0}
	As pointed out by  Schonbek, Schonbek and S\"uli (page 721 in \cite{Schonbek_Suli_1996}), there exists $u_0$ satisfying all the assumptions in Theorem \ref{theorem_decay}. Actually, for a given function $g\in C_0^\infty(\mathbb{R}^3)$ with $g(0) =2c_0\neq 0$. Define
	\begin{align}\label{def_u_0}
	\hat{u}_0(\xi)=(\hat{u}_{0,1},\hat{u}_{0,2},\hat{u}_{0,3})^\top:=\left(\frac{\xi_2}{\sqrt{\xi_1^2+\xi_2^2}}g(\xi),-\frac{\xi_1}{\sqrt{\xi_1^2+\xi_2^2}}g(\xi),0\right)^\top.
\end{align}
 Then,
	\begin{align*}
		\|u_0\|_{H^3}^2 =&\, \int_{\mathbb{R}^3}(1+|\xi|^2)^3|\hat{u}_0(\xi)|^2 d\xi = \int_{\mathbb{R}^3}(1+|\xi|^2)^3|g(\xi)|^2 d\xi < \infty, \\
		\widehat{\mathrm{div} u_0} =&\, i\xi_1\hat{u}_{0,1} +  i\xi_2\hat{u}_{0,2} + i\xi_3\hat{u}_{0,3} = i\frac{\xi_1\xi_2}{\sqrt{\xi_1^2+\xi_2^2}}g(\xi)-i\frac{\xi_1\xi_2}{\sqrt{\xi_1^2+\xi_2^2}}g(\xi) = 0,
     \end{align*}
	
	and
	$$\sup_{\xi\in\mathbb{R}^3}|\hat{u}_0| \leq \sup_{\xi\in\mathbb{R}^3}|g(\xi)|<\infty. $$
	Thus, $u_0\in H^3$ with $\sup_{\xi\in\mathbb{R}^3}|\hat{u}_0|<\infty$ and $\mathrm{div}u_0 = 0$.  Moreover, by the continuity of $g$, there exists a constant $R$, such that
	$$
		|g(\xi)| \geq c_0,~~\text{for all }~~0\leq |\xi|\leq R.
	$$
	Hence,
	$$
	\inf_{0\leq |\xi|\leq R}|\hat{u}_0|  = \inf_{0\leq |\xi|\leq R}\left(\frac{\xi_2^2}{\xi_1^2+\xi_2^2}|g(\xi)|^2 + \frac{\xi_1^2}{\xi_1^2+\xi_2^2}|g(\xi)|^2\right)^{\frac{1}{2}} = \inf_{0\leq |\xi|\leq R}|g(\xi)| \geq  c_0 >0.
	$$
	Therefore, $u_0$ defined by \eqref{def_u_0} satisfies all the assumptions  in Theorem \ref{theorem_decay}.
\end{remark}
\subsection{Ingredients of the proof}
The results in this paper consist of two parts: (1) The global well-posedness, and (2) the optimal decay rates of the solution.

For part (1), the main difficulty is to pursue the dissipation of the velocity $u$. In particular for Case I, due to the lack
of diffusion in $u$, the dissipation mechanism  hidden in the system must be shown in a proper way. In contrast with \cite{Elgindi-Liu,Elgindi-Rousset} where the authors detect a damping phenomenon on $\omega= {\rm curl}u$ and define a new quantity $\Gamma=\omega-\mathcal{R}\tau$ with $\mathcal{R}(\cdot)=-(-\Delta)^{-1}{\rm curl}({\rm div}(\cdot))$ to construct the energy estimates, our strategy is to first consider the auxiliary system of $(u, \sqrt{-\Delta}^{-1}\mathbb{P}\mathrm{div}\tau)$ (see \eqref{u-sigma} in Section 2), where $\mathbb{P}$ is the Larey projection operator. We are lucky to find that the auxiliary system \eqref{u-sigma} can handle Case I and Case II in a unified way, and the linear coupling term $\alpha\mathbb{D}u$ plays an import role in
the supplement of the dissipation of $u$. Furthermore, no additional conditions are required for any parameters. Thus we give a new proof for global existence in Case I compared with \cite{Elgindi-Liu}.

For part (2), to obtain the time decay rates, we study the Green matrix of system \eqref{system} in Fourier variables. One of the novel part of this paper is that we find that in both Case I and Case II, the decay rate of $\tau$ is faster than that of $u$ for each derivatives up to 1. This phenomenon has not been discovered in previous works \cite{Hieber-Wen-Zi,Wang-Wen}. Let us now explain this phenomenon more specifically in the linear level. As a matter of fact, according to Corollary \ref{coro}, the linear part of the solution $(u_L, \tau_L)$ can be represented as follows:
\begin{equation*}
\begin{split}
   \widehat{u_L}^j(\xi,t)
    =&\left(\mathcal{G}_3(\xi, t)-\epsilon|\xi|^2 \mathcal{G}_1(\xi, t)\right)\hat{u}^j_0(\xi)
    +i\kappa\mathcal{G}_1(\xi, t)\left(\delta_{j,k}-\frac{\xi_j\xi_k}{|\xi|^2}\right)\xi_l\hat{\tau}_0^{l,k}(\xi),
\end{split}
\end{equation*} and
\begin{equation*}
\begin{split}
   \widehat{\tau_L}^{j,k}(\xi,t)
   =&e^{-(\beta+\mu|\xi|^2)t} \hat{\tau}^{j,k}_0(\xi)+i\frac{\alpha}{2}\mathcal{G}_1(\xi, t)\left(\xi_k\hat{u}_0^j+\xi_j\hat{u}_0^k\right)
   \\&
   -\left(e^{-(\beta+\mu|\xi|^2)t}-\mathcal{G}_2(\xi, t)
   -\epsilon|\xi|^2 \mathcal{G}_1(\xi, t)\right)\frac{\xi_k\left(\delta_{j,p}-\frac{\xi_j\xi_p}{|\xi|^2}\right)+\xi_j\left(\delta_{k,p}-\frac{\xi_k\xi_p}{|\xi|^2}\right)}{|\xi|^2}\xi_l\hat{\tau}_0^{l,p}(\xi),
\end{split}
\end{equation*}
The definitions of $\mathcal{G}_1, \mathcal{G}_2$ and $\mathcal{G}_3$ can be found in \eqref{G}. A rough analysis on the eigenvalues $\lambda_\pm$ shows that $\mathcal{G}_1(\xi, t)$ and $\mathcal{G}_3(\xi, t)$ behave like the heat kernel in low frequencies (see Proposition \ref{theorem-Green}), thus
\begin{equation}\label{hat-u}
 |\widehat{u_L}(\xi,t)|\mathbbm{1}_{\{|\xi|\le R\}}
    \le Ce^{-\theta|\xi|^2t}\left(|\hat{u}_0(\xi)|+ |\hat{\tau}_0(\xi)|\right).
\end{equation}
As for $\widehat{\tau_L}$, a key observation is that, thanks to the damping $\beta\tau$ in $\eqref{system}_2$,
$\mathcal{G}_2(\xi, t)$ behaves better than the heat kernel in low frequencies (see\eqref{lemma_lower_eq2} for details). On this basis, one deduces that
\begin{equation}\label{hat-tau}
|\widehat{\tau}_L(\xi,t)|\mathbbm{1}_{\{|\xi|\le R\}}\le C\left(e^{-\beta t}+|\xi|e^{-\theta|\xi|^2t}\right)\left(|\hat{u}_0(\xi)|+ |\hat{\tau}_0(\xi)|\right).
\end{equation}
We can see from \eqref{hat-u} and \eqref{hat-tau} that the decay rates of $u_L$ and $\tau_L$ are determined by the factors $e^{-\theta|\xi|^2t}$ and $|\xi|e^{-\theta|\xi|^2t}$, respectively, and the extra factor $|\xi|$ in \eqref{hat-tau} provides time decay rate $(1+t)^{-\frac12}$. This explains why $\tau$ decays faster than $u$ of power $\frac12$.

On the other hand, the key observation  \eqref{lemma_lower_eq2} on $\mathcal{G}_2$ also plays an important role in the lower time-decay estimates (see Section 4.2). The main difficulty lies in that the nonlinear term $Q(\nabla u,\tau)$ does not decay fast enough to be absorbed by the linear term.  Similar difficulty appears in a previous result \cite{Hu-Wu_2013},  where the authors made use of the compatibility conditions to rewrite the main nonlinear terms into a gradient form, and thus gains one derivative. Although no compatibility conditions are available for our system \eqref{system},  the observation \eqref{lemma_lower_eq2} enable us to offset the insufficient time decay of the nonlinear nonlinear term $Q(\nabla u,\tau)$.

\bigskip

The rest of the paper are organized as follows. In Section \ref{sec2}, we give explicit expressions for the Fourier transform of Green matrix of the linearized system and an associated auxiliary system. Then  
the pointwise estimates on the low-frequency part of them are given, which are independent of the fluid viscosity and the center-of-mass diffusion coefficient. This will give some possibilities to achieve the desired results for the nonlinear system even if $\epsilon=0$ or $\mu=0$.
In Section \ref{sec3}, we present the global existence-uniqueness theorem for strong solutions with small initial data, and sketch the proof.
In Section \ref{sec4}, based on the energy estimates in Section \ref{sec3} and the pointwise estimates on the low-frequency part of the Green matrix in Section \ref{sec2}, and by virtue of the Duhamel's principle, we first get the time-decay estimates of the first and the second derivatives of the solutions in $L^2$ norm. Furthermore, we get $L^2$ decay estimates on the zero order and the third order derivatives of the solutions. Finally, we establish the lower bounds for the decay estimates by a delicate analysis on the eigenvalue.
In Appendix (Section \ref{sec5}), for Case II we give optimal time-decay estimates for $\|\nabla^3u^{\epsilon,\mu}\|_{L^2}$ and $\|\nabla^2\tau^{\epsilon,\mu}\|_{L^2}$ at the same rate. However, it seems difficult to get sharper decay rate for $\|\nabla^3\tau^{\epsilon,\mu}\|_{L^2}$, since the corresponding sharper decay for $\|\nabla^4u^{\epsilon,\mu}\|_{L^2}$ is usually needed in our approach (see (\ref{eq_tau})).

\section{Analysis of a linearized system}\label{sec2}\setcounter{equation}{0}

The main difficulty in exploring the optimal time-decay estimates of (\ref{system}) with $\epsilon$ or $\mu$ vanishing is the lack of dissipation. In order to supplement it, we introduce an auxiliary system. Namely, applying the Leray projection operator $\mathbb{P}$ and the operator $\mathbb{P}{\rm div}$ to $\eqref{system}_1$  and $\eqref{system}_2$, respectively, we obtain
\begin{eqnarray} \label{system-1}
\begin{cases}
     \partial_tu+\mathbb{P}\left(u\cdot\nabla u\right)-\epsilon\Delta u=\kappa \mathbb{P}{\rm div}\tau,\\
     \partial_t\mathbb{P}{\rm div}\tau+\mathbb{P}{\rm div}\left(u\cdot\nabla\tau\right)-\mu\Delta\mathbb{P}{\rm div}\tau+\beta\mathbb{P}{\rm div}\tau
     \\ \quad
     =\mathbb{P}{\rm div}Q(\nabla u,\tau)+\frac{\alpha}{2}\Delta u.
\end{cases}
\end{eqnarray}
Then, applying the operator $\Lambda^{-1}=(\sqrt{-\Delta})^{-1}$ to (\ref{system-1})$_2$ and denoting by
\begin{eqnarray*}
   \sigma\doteq \Lambda^{-1}\mathbb{P}{\rm div}\tau
\end{eqnarray*} with $\left(\hat{\sigma}\right)^j=i\left(\delta_{j,k}-\frac{\xi_j\xi_k}{|\xi|^2}\right)\frac{\xi_l}{|\xi|}\left(\hat{\tau}\right)^{l,k}$, one can rewrite (\ref{system-1}) as follows
\begin{eqnarray} \label{u-sigma}
\begin{cases}
     \partial_tu-\epsilon\Delta u-\kappa \Lambda\sigma=\mathcal{F}_1,\\
     \partial_t\sigma-\mu\Delta\sigma+\beta\sigma+\frac{\alpha}{2}\Lambda u=\mathcal{F}_2.
\end{cases}
\end{eqnarray} where the nonlinear terms are stated as below:
\begin{equation*}
\begin{split}
      &\mathcal{F}_1=-\mathbb{P}\left(u\cdot\nabla u\right),\
      \\ &
      \mathcal{F}_2=-\Lambda^{-1}\mathbb{P}{\rm div}\left(u\cdot\nabla\tau\right)+\Lambda^{-1}\mathbb{P}{\rm div}Q(\nabla u,\tau).
\end{split}
\end{equation*}
\subsection{Fourier transform of the Green matrix}
We consider the linearized system of (\ref{system}), i.e.,
\begin{eqnarray} \label{linear system}
\begin{cases}
     \partial_tu-\epsilon\Delta u=\kappa \mathbb{P}{\rm div}\tau,\\
     \partial_t\tau-\mu\Delta\tau+\beta\tau=\alpha\mathbb{D}u,\\
     (u,\tau)(x,0)=(u_0,\tau_0),
\end{cases}
\end{eqnarray}
and an auxiliary linear system which is the linearized equations of \eqref{u-sigma}:
\begin{eqnarray} \label{u-sigma_linear}
\begin{cases}
\partial_tu-\epsilon\Delta u-\kappa \Lambda\sigma=0,\\
\partial_t\sigma-\mu\Delta\sigma+\beta\sigma+\frac{\alpha}{2}\Lambda u=0,\\
(u,\sigma)(x,0)=(u_0,\sigma_0)(x),
\end{cases}
\end{eqnarray}where  $\sigma\doteq \Lambda^{-1}\mathbb{P}{\rm div}\tau$

Letting $\mathbb{G}_{u,\tau}$ and $\mathbb{G}_{u,\sigma}$ denote the Green matrixes of the systems (\ref{linear system}) and \eqref{u-sigma_linear}, respectively, then we have the following expressions of Fourier transform of them.
\begin{lemma}
Fourier transform of the solution to the auxiliary system (\ref{u-sigma_linear}) can be solved as follows:
\begin{eqnarray*}
    \left(\begin{matrix} \hat{u}\\ \hat{\sigma} \end{matrix}\right)=\hat{\mathbb{G}}_{u,\sigma}(\xi, t)\left(\begin{matrix} \hat{u}_0\\ \hat{\sigma}_0 \end{matrix}\right)
\end{eqnarray*}
with
\begin{eqnarray} \label{Green-matrix}
    \hat{\mathbb{G}}_{u,\sigma}(\xi, t)=\left(\begin{matrix} \big[\mathcal{G}_3(\xi,t)-\epsilon|\xi|^2\mathcal{G}_1(\xi,t)\big]\mathbb{I}_3 & \kappa|\xi|\mathcal{G}_1(\xi,t)\mathbb{I}_3 \\
    -\frac{\alpha}{2}|\xi|\mathcal{G}_1(\xi,t)\mathbb{I}_3 & \big[\mathcal{G}_2(t,\xi)+\epsilon|\xi|^2\mathcal{G}_1(\xi,t)\big]\mathbb{I}_3\end{matrix}\right),
\end{eqnarray}
and
\begin{eqnarray}\label{G}
   \mathcal{G}_1(\xi, t)=\frac{e^{\lambda_+t}-e^{\lambda_-t}}{\lambda_+-\lambda_-}, \ \mathcal{G}_2(\xi, t)=\frac{\lambda_+e^{\lambda_+t}-\lambda_-e^{\lambda_-t}}{\lambda_+-\lambda_-}, \ \mathcal{G}_3(\xi, t)=\frac{\lambda_+e^{\lambda_-t}-\lambda_-e^{\lambda_+t}}{\lambda_+-\lambda_-},
\end{eqnarray} where $\mathbb{I}_3$ is a $3\times3$ unit matrix.
\end{lemma}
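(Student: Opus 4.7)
The plan is to reduce the statement to a finite-dimensional linear ODE computation at each Fourier frequency. Applying the Fourier transform to \eqref{u-sigma_linear} and using $\widehat{\Lambda f}(\xi) = |\xi|\hat f(\xi)$, the system becomes, for each fixed $\xi\in\mathbb{R}^3$,
\begin{equation*}
\partial_t\begin{pmatrix}\hat u\\\hat\sigma\end{pmatrix}(\xi,t)=A(\xi)\begin{pmatrix}\hat u\\\hat\sigma\end{pmatrix}(\xi,t),\qquad A(\xi)=\begin{pmatrix}-\epsilon|\xi|^2\,\mathbb{I}_3 & \kappa|\xi|\,\mathbb{I}_3\\[2pt] -\tfrac{\alpha}{2}|\xi|\,\mathbb{I}_3 & -(\mu|\xi|^2+\beta)\,\mathbb{I}_3\end{pmatrix}.
\end{equation*}
Since every block is a scalar multiple of $\mathbb{I}_3$, the six components decouple into three identical $2\times2$ scalar systems; it therefore suffices to exponentiate the $2\times2$ matrix
$$
\tilde A(\xi)=\begin{pmatrix}-\epsilon|\xi|^2 & \kappa|\xi|\\ -\tfrac{\alpha}{2}|\xi| & -(\mu|\xi|^2+\beta)\end{pmatrix}.
$$

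Next I compute the eigenvalues of $\tilde A(\xi)$ from the characteristic polynomial $\lambda^2-(\mathrm{tr}\tilde A)\lambda+\det\tilde A=0$, which gives the two roots $\lambda_\pm=\lambda_\pm(\xi)$ satisfying
$$
\lambda_++\lambda_-=-\epsilon|\xi|^2-\mu|\xi|^2-\beta,\qquad \lambda_+\lambda_-=\epsilon|\xi|^2(\mu|\xi|^2+\beta)+\tfrac{\alpha\kappa}{2}|\xi|^2.
$$
Assuming $\lambda_+\neq\lambda_-$ (the coincident case is handled by continuity or by taking a limit), the Sylvester/Lagrange interpolation formula for a $2\times2$ matrix yields
$$
e^{\tilde A t}=\frac{e^{\lambda_+ t}-e^{\lambda_- t}}{\lambda_+-\lambda_-}\,\tilde A+\frac{\lambda_+ e^{\lambda_- t}-\lambda_- e^{\lambda_+ t}}{\lambda_+-\lambda_-}\,I=\mathcal{G}_1(\xi,t)\,\tilde A+\mathcal{G}_3(\xi,t)\,I.
$$
Carrying out the multiplication entrywise immediately produces the $(1,1)$, $(1,2)$, and $(2,1)$ entries of $\hat{\mathbb{G}}_{u,\sigma}$ as stated in \eqref{Green-matrix}.

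The only step that is not purely mechanical is the identification of the $(2,2)$ entry: the above formula gives $\mathcal{G}_3-(\mu|\xi|^2+\beta)\mathcal{G}_1$, whereas \eqref{Green-matrix} asserts $\mathcal{G}_2+\epsilon|\xi|^2\mathcal{G}_1$. I would reconcile these by the algebraic identity
$$
\mathcal{G}_3-\mathcal{G}_2=\frac{-(\lambda_++\lambda_-)(e^{\lambda_+ t}-e^{\lambda_- t})}{\lambda_+-\lambda_-}=-(\lambda_++\lambda_-)\mathcal{G}_1,
$$
which together with the trace relation $\lambda_++\lambda_-=-(\epsilon+\mu)|\xi|^2-\beta$ gives $\mathcal{G}_3=\mathcal{G}_2+\bigl((\epsilon+\mu)|\xi|^2+\beta\bigr)\mathcal{G}_1$, and hence $\mathcal{G}_3-(\mu|\xi|^2+\beta)\mathcal{G}_1=\mathcal{G}_2+\epsilon|\xi|^2\mathcal{G}_1$, matching the claim. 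This trace-identity bookkeeping is the main (and only) subtle point; everything else is a routine exponentiation of a $2\times 2$ block followed by tensoring with $\mathbb{I}_3$.
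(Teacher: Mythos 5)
Your proposal is correct. Every step checks out: the Fourier-side system does decouple into three identical $2\times 2$ scalar systems, the trace and determinant of $\tilde A(\xi)$ reproduce exactly the relations $\lambda_++\lambda_-=-[(\mu+\epsilon)|\xi|^2+\beta]$ and $\lambda_+\lambda_-=|\xi|^2[\epsilon(\mu|\xi|^2+\beta)+\frac{\alpha\kappa}{2}]$ used in the paper, the Lagrange--Sylvester formula $e^{\tilde At}=\mathcal{G}_1\tilde A+\mathcal{G}_3 I$ is the correct spectral interpolation for a $2\times2$ matrix with distinct eigenvalues, and your identity $\mathcal{G}_3-\mathcal{G}_2=-(\lambda_++\lambda_-)\mathcal{G}_1$ correctly converts the $(2,2)$ entry $\mathcal{G}_3-(\mu|\xi|^2+\beta)\mathcal{G}_1$ into the stated form $\mathcal{G}_2+\epsilon|\xi|^2\mathcal{G}_1$. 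The route is genuinely different from the paper's, though the two are close cousins. The paper eliminates $\hat\sigma^j$ to obtain a scalar second-order ODE for $\hat u^j$, solves it as $c_1^je^{\lambda_+t}+c_2^je^{\lambda_-t}$ by matching the initial data $\hat u^j(0)$ and $\partial_t\hat u^j(0)=\kappa|\xi|\hat\sigma_0^j-\epsilon|\xi|^2\hat u_0^j$, and then repeats the ``similar calculation'' for $\hat\sigma^j$; in that derivation the $(2,2)$ entry emerges directly in the form $\mathcal{G}_2+\epsilon|\xi|^2\mathcal{G}_1$ because the initial-slope matching uses $-(\mu|\xi|^2+\beta)=\lambda_++\lambda_-+\epsilon|\xi|^2$. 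Your matrix-exponential approach buys all four entries in a single computation and makes the structure (a multiple of $\tilde A$ plus a multiple of $I$) transparent, at the cost of the extra bookkeeping identity relating $\mathcal{G}_2$ and $\mathcal{G}_3$; the paper's elimination approach is more pedestrian but lands on the published normal form of each entry without further manipulation. Your parenthetical remark that the coincident-eigenvalue case is handled by continuity is an adequate disposal of the only degenerate configuration, and is no less rigorous than the paper, which leaves that case implicit.
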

\begin{proof}
Applying Fourier transform to the linearized system (\ref{u-sigma_linear}), we arrive at
\begin{eqnarray} \label{system-linear-Fourier}
\begin{cases}
     \partial_t\hat{u}^j-\kappa|\xi|\hat{\sigma}^j+\epsilon|\xi|^2\hat{u}^j=0,\\
     \partial_t\hat{\sigma}^j+\left(\mu|\xi|^2+\beta\right)\hat{\sigma}^j+\frac{\alpha}{2}|\xi| \hat{u}^j=0.
\end{cases}
\end{eqnarray} On one hand, it follows from the first equation of (\ref{system-linear-Fourier}) that
\begin{eqnarray} \label{sigmaj}
     \kappa|\xi|\hat{\sigma}^j=\partial_t\hat{u}^j+\epsilon|\xi|^2\hat{u}^j.
\end{eqnarray}
On the other hand, multiplying (\ref{system-linear-Fourier})$_2$ by $\kappa|\xi|$, one has
\begin{eqnarray} \label{sigmaj-2}
     \partial_t\left(\kappa|\xi|\hat{\sigma}^j\right)+\left(\mu|\xi|^2+\beta\right)\kappa|\xi|\hat{\sigma}^j+\frac{\alpha\kappa}{2}|\xi|^2 \hat{u}^j=0.
\end{eqnarray} Then, substituting (\ref{sigmaj}) into (\ref{sigmaj-2}), one obtains
\begin{eqnarray} \label{uj}
    \partial_{tt}\hat{u}^j+\left[\left(\mu+\epsilon\right)|\xi|^2+\beta\right]\partial_t\hat{u}^j+|\xi|^2\left[\epsilon\left(\mu|\xi|^2+\beta\right)+\frac{\alpha\kappa}{2}\right]\hat{u}^j=0,
\end{eqnarray}
which directly implies the corresponding characteristic equation:
\begin{eqnarray*}
    \lambda^2+\left[\left(\mu+\epsilon\right)|\xi|^2+\beta\right]\lambda+|\xi|^2\left[\epsilon\left(\mu|\xi|^2+\beta\right)+\frac{\alpha\kappa}{2}\right]=0,
\end{eqnarray*} with roots $\lambda_\pm$ satisfying
\begin{eqnarray} \label{eigen-relation}
\begin{cases}
     \lambda_{\pm}=\frac{-\left[\left(\mu+\epsilon\right)|\xi|^2+\beta\right]\pm\sqrt{\left[\left(\mu+\epsilon\right)|\xi|^2+\beta\right]^2-4|\xi|^2\left[\epsilon\left(\mu|\xi|^2+\beta\right)+\frac{\alpha\kappa}{2}\right]}}{2},\\
     \lambda_++\lambda_-=-\left[\left(\mu+\epsilon\right)|\xi|^2+\beta\right],\\
     \lambda_+\lambda_-=|\xi|^2\left[\epsilon\left(\mu|\xi|^2+\beta\right)+\frac{\alpha\kappa}{2}\right].
\end{cases}
\end{eqnarray}
Therefore, (\ref{uj}) can be solved in the following form.
\begin{eqnarray} \label{ode}
\begin{cases}
     {\hat{u}}^j(t)=c_1^je^{\lambda_+t}+c_2^je^{\lambda_-t},\\
     {\hat{u}}^j(0)={\hat{u}}^j_0,\\
     \partial_t{\hat{u}}^j(0)=\kappa|\xi|\hat{\sigma}^j_0-\epsilon|\xi|^2\hat{u}^j_0,
\end{cases}
\end{eqnarray} where
\begin{eqnarray*}
     c_1^j=\frac{\left(\lambda_-+\epsilon|\xi|^2\right)\hat{u}_0^j-\kappa|\xi|\hat{\sigma}_0^j}{\lambda_--\lambda_+},\
     c_2^j=\frac{-\left(\lambda_++\epsilon|\xi|^2\right)\hat{u}_0^j+\kappa|\xi|\hat{\sigma}_0^j}{\lambda_--\lambda_+}.
\end{eqnarray*} In conclusion, we obtain
\begin{eqnarray} \label{ode-s}
    \hat{u}^j(t)=\left(\frac{\lambda_+e^{\lambda_-t}-\lambda_-e^{\lambda_+t}}{\lambda_+-\lambda_-}-\epsilon|\xi|^2\frac{e^{\lambda_+t}-e^{\lambda_-t}}{\lambda_+-\lambda_-}\right)\hat{u}^j_0 +\kappa|\xi|\frac{e^{\lambda_+t}-e^{\lambda_-t}}{\lambda_+-\lambda_-}\hat{\sigma}_0^j.
\end{eqnarray} Similar calculations conducted on (\ref{system-linear-Fourier}) yield that
\begin{eqnarray} \label{ode-sigma}
    \hat{\sigma}^j(t)=-\frac{\alpha}{2}|\xi|\frac{e^{\lambda_+t}-e^{\lambda_-t}}{\lambda_+-\lambda_-}\hat{u}_0^j +\left[\frac{\lambda_+e^{\lambda_+t}-\lambda_-e^{\lambda_-t}}{\lambda_+-\lambda_-}+\epsilon|\xi|^2\frac{e^{\lambda_+t}-e^{\lambda_-t}}{\lambda_+-\lambda_-}\right]\hat{\sigma}_0^j.
\end{eqnarray}
The proof of the lemma is complete.
\end{proof}

From (\ref{ode-s}) and (\ref{ode-sigma}), we deduce the following corollary immediately.

\begin{corollary}\label{coro}
The explicit expression of $\hat{\mathbb{G}}_{u,\tau}(\xi,t)$ is determined by
\begin{equation*}
\begin{split}
   \hat{u}^j(\xi,t)
    =&\left(\frac{\lambda_+e^{\lambda_-t}-\lambda_-e^{\lambda_+t}}{\lambda_+-\lambda_-}-\epsilon|\xi|^2\frac{e^{\lambda_+t}-e^{\lambda_-t}}{\lambda_+-\lambda_-}\right)\hat{u}^j_0(\xi) \\&
    +i\kappa\frac{e^{\lambda_+t}-e^{\lambda_-t}}{\lambda_+-\lambda_-}\left(\delta_{j,k}-\frac{\xi_j\xi_k}{|\xi|^2}\right)\xi_l\hat{\tau}_0^{l,k}(\xi),
\end{split}
\end{equation*} and
\begin{equation*}
\begin{split}
   \hat{\tau}^{j,k}(\xi,t)
   =&e^{-(\beta+\mu|\xi|^2)t} \hat{\tau}^{j,k}_0(\xi)+i\frac{\alpha}{2}\frac{e^{\lambda_+t}-e^{\lambda_-t}}{\lambda_+-\lambda_-}\left(\xi_k\hat{u}_0^j+\xi_j\hat{u}_0^k\right)
   \\&
   -\left(e^{-(\beta+\mu|\xi|^2)t}-\frac{\lambda_+e^{\lambda_+t}-\lambda_-e^{\lambda_-t}}{\lambda_+-\lambda_-}
   -\epsilon|\xi|^2 \frac{e^{\lambda_+t}-e^{\lambda_-t}}{\lambda_+-\lambda_-}\right)
   \\&
   \times\frac{\xi_k\left(\delta_{j,p}-\frac{\xi_j\xi_p}{|\xi|^2}\right)+\xi_j\left(\delta_{k,p}-\frac{\xi_k\xi_p}{|\xi|^2}\right)}{|\xi|^2}\xi_l\hat{\tau}_0^{l,p}(\xi).
\end{split}
\end{equation*}
\end{corollary}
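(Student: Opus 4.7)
The plan is to obtain the formula for $\hat u^j(\xi,t)$ by direct substitution into (\ref{ode-s}) of the preceding lemma, and the formula for $\hat\tau^{j,k}(\xi,t)$ by Duhamel's principle applied to the $\tau$-equation of \eqref{linear system}, with the main calculation reducing to two integral identities for $\mathcal{G}_1,\mathcal{G}_2,\mathcal{G}_3$.

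First, since $u$ is already divergence-free, applying $\mathbb{P}$ to $\eqref{linear system}_1$ leaves it unchanged, so the $u$-component of the Green matrix for \eqref{linear system} coincides with that of \eqref{u-sigma_linear}. Using the relation $\sigma_0=\Lambda^{-1}\mathbb{P}\mathrm{div}\,\tau_0$, which in Fourier variables reads $\hat\sigma_0^j = i\bigl(\delta_{j,k}-\xi_j\xi_k/|\xi|^2\bigr)(\xi_l/|\xi|)\hat\tau_0^{l,k}$, substitution into (\ref{ode-s}) produces the stated expression for $\hat u^j$, with the factor $\kappa|\xi|\cdot i/|\xi| = i\kappa$ combining precisely as claimed.

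Next, Fourier-transforming $\eqref{linear system}_2$ gives
\begin{equation*}
\partial_t \hat\tau^{j,k} + (\beta+\mu|\xi|^2)\hat\tau^{j,k} = \tfrac{i\alpha}{2}\bigl(\xi_k\hat u^j + \xi_j\hat u^k\bigr),
\end{equation*}
so Duhamel's formula yields
\begin{equation*}
\hat\tau^{j,k}(\xi,t) = e^{-(\beta+\mu|\xi|^2)t}\hat\tau_0^{j,k} + \tfrac{i\alpha}{2}\int_0^t e^{-(\beta+\mu|\xi|^2)(t-s)}\bigl(\xi_k\hat u^j(\xi,s) + \xi_j\hat u^k(\xi,s)\bigr)\,ds.
\end{equation*}
Inserting the formula for $\hat u^j(\xi,s)$ just derived splits the integral into an $\hat u_0$-part proportional to $\int_0^t e^{-(\beta+\mu|\xi|^2)(t-s)}[\mathcal{G}_3(\xi,s)-\epsilon|\xi|^2\mathcal{G}_1(\xi,s)]\,ds$ and an $\hat\tau_0$-part proportional to $\int_0^t e^{-(\beta+\mu|\xi|^2)(t-s)}\mathcal{G}_1(\xi,s)\,ds$. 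The heart of the argument is then the identification
\begin{equation*}
\mathrm{(i)}\quad \int_0^t e^{-(\beta+\mu|\xi|^2)(t-s)}\bigl[\mathcal{G}_3(\xi,s)-\epsilon|\xi|^2\mathcal{G}_1(\xi,s)\bigr]ds = \mathcal{G}_1(\xi,t),
\end{equation*}
\begin{equation*}
\mathrm{(ii)}\quad \tfrac{\alpha\kappa|\xi|^2}{2}\int_0^t e^{-(\beta+\mu|\xi|^2)(t-s)}\mathcal{G}_1(\xi,s)\,ds = e^{-(\beta+\mu|\xi|^2)t} - \mathcal{G}_2(\xi,t) - \epsilon|\xi|^2\mathcal{G}_1(\xi,t),
\end{equation*}
both of which I would prove by checking that each side vanishes at $t=0$ and satisfies the same first-order linear ODE $\partial_t f + (\beta+\mu|\xi|^2)f = (\mathrm{RHS})$, then invoking uniqueness. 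The verifications use $\partial_t\mathcal{G}_1 = \mathcal{G}_2$ and $\partial_t\mathcal{G}_2 = -[(\mu+\epsilon)|\xi|^2+\beta]\mathcal{G}_2 - \lambda_+\lambda_-\mathcal{G}_1$, together with Vieta's relations \eqref{eigen-relation}; identity (ii) in particular exploits the combination $\lambda_+\lambda_- - \epsilon(\beta+\mu|\xi|^2)|\xi|^2 = \tfrac{\alpha\kappa|\xi|^2}{2}$, which is exactly what puts the $\tfrac{\alpha\kappa}{2}$ prefactor in front of the left-hand side of (ii). Substituting (i) into the $\hat u_0$-part and (ii) (together with the $i\cdot i=-1$ coming from the two imaginary factors) into the $\hat\tau_0$-part produces the claimed expression.

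The main obstacle is essentially bookkeeping: one must carefully track the symmetric $(j,k)$ structure produced by Duhamel (since $\hat u^j$ multiplies $\xi_k$ while $\hat u^k$ multiplies $\xi_j$) and the sign from $i^2=-1$ that yields the overall minus in front of the last tensor contraction. Beyond this, no genuine analytic difficulty remains, since once the two ODE-based identities (i) and (ii) are in hand the rest of the proof reduces to a tensorial algebraic rearrangement.
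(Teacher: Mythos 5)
Your proposal is correct and follows essentially the same route as the paper: Duhamel's formula for $\hat{\tau}^{j,k}$, substitution of the expression (\ref{ode-s}) for $\hat{u}^j$, and evaluation of the resulting $\hat{u}_0$- and $\hat{\sigma}_0$-integrals (your identities (i) and (ii) are exactly the paper's computations of $I_1$ and $I_2$, and your relation $\lambda_+\lambda_--\epsilon|\xi|^2(\beta+\mu|\xi|^2)=\tfrac{\alpha\kappa}{2}|\xi|^2$ is the paper's ``key observation'' $(\lambda_++\epsilon|\xi|^2)(\lambda_-+\epsilon|\xi|^2)=\tfrac{\alpha\kappa}{2}|\xi|^2$ in disguise). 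The only cosmetic difference is that you verify the two integral identities by ODE uniqueness while the paper antidifferentiates the exponentials directly; both are valid.
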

\begin{proof}
Conducting Fourier transform on both sides of (\ref{linear system})$_2$, we have
\begin{eqnarray*}
   \partial_t\hat{\tau}^{j,k}+\left(\beta+\mu|\xi|^2\right)\hat{\tau}^{j,k}=i\frac{\alpha}{2}\left(\xi_k\hat{u}^j+\xi_j\hat{u}^k\right),
\end{eqnarray*} which implies
\begin{equation} \label{tau}
\begin{split}
   \hat{\tau}^{j,k}=&e^{-(\beta+\mu|\xi|^2)t} \hat{\tau}^{j,k}_0+i\frac{\alpha}{2}e^{-(\beta+\mu|\xi|^2)t}\int_0^te^{(\beta+\mu|\xi|^2)s}\left(\xi_k\hat{u}^j+\xi_j\hat{u}^k\right)ds.
\end{split}
\end{equation}
Substituting (\ref{ode-s}) into the second term on the right-hand side of (\ref{tau}), and using (\ref{eigen-relation}), we have
\begin{equation}\label{2.22}
\begin{split}
   &\int_0^te^{(\beta+\mu|\xi|^2)s}\left(\xi_k\hat{u}^j+\xi_j\hat{u}^k\right)ds
   \\ =&
   \underbrace{\left(\xi_k\hat{u}_0^j+\xi_j\hat{u}_0^k\right)\int_0^te^{(\beta+\mu|\xi|^2)s}\left(\frac{\lambda_+e^{\lambda_-s}-\lambda_-e^{\lambda_+s}} {\lambda_+-\lambda_-}-\epsilon|\xi|^2\frac{e^{\lambda_+s}-e^{\lambda_-s}}{\lambda_+-\lambda_-}\right)ds}_{I_1},
   \\&
   +\underbrace{\kappa|\xi|\left(\xi_k{\hat{\sigma}}^j_0
   +\xi_j{\hat{\sigma}}^k_0\right)\int_0^te^{(\beta+\mu|\xi|^2)s}\frac{e^{\lambda_+s}-e^{\lambda_-s}}{\lambda_+-\lambda_-}
   ds}_{I_2},
\end{split}
\end{equation}
where
\begin{equation*}
\begin{split}
   I_1=&\left(\xi_k\hat{u}_0^j+\xi_j\hat{u}_0^k\right)\int_0^t\frac{\left(\lambda_++\epsilon|\xi|^2\right)e^{\lambda_-s}-\left(\lambda_-+\epsilon|\xi|^2\right) e^{\lambda_+s}}{\lambda_+-\lambda_-}e^{-\left(\lambda_++\lambda_-+\epsilon|\xi|^2\right)s}ds
   \\=&
   \left(\xi_k\hat{u}_0^j+\xi_j\hat{u}_0^k\right)\int_0^t\frac{\left(\lambda_++\epsilon|\xi|^2\right)e^{-\left(\lambda_++\epsilon|\xi|^2\right)s}-\left(\lambda_-+\epsilon|\xi|^2\right) e^{-\left(\lambda_-+\epsilon|\xi|^2\right)s}}{\lambda_+-\lambda_-}ds
   \\=&
   \left(\xi_k\hat{u}_0^j+\xi_j\hat{u}_0^k\right)\frac{\left[-e^{-\left(\lambda_++\epsilon|\xi|^2\right)s}+e^{-\left(\lambda_-+\epsilon|\xi|^2\right)s}\right]_0^t}{\lambda_+-\lambda_-}
   \\=&
   e^{-\epsilon|\xi|^2t}\frac{e^{-\lambda_-t}-e^{-\lambda_+t}}{\lambda_+-\lambda_-}\left(\xi_k\hat{u}_0^j+\xi_j\hat{u}_0^k\right),
\end{split}
\end{equation*} and thus
\begin{equation} \label{iI1}
\begin{split}
   i\frac{\alpha}{2}e^{-(\beta+\mu|\xi|^2)t}I_1&=i\frac{\alpha}{2}e^{-\left[\beta+(\mu+\epsilon)|\xi|^2\right]t}\frac{e^{-\lambda_-t}-e^{-\lambda_+t}}{\lambda_+-\lambda_-}\left(\xi_k\hat{u}_0^j+\xi_j\hat{u}_0^k\right)
   \\&
   =i\frac{\alpha}{2}e^{(\lambda_++\lambda_-)t}\frac{e^{-\lambda_-t}-e^{-\lambda_+t}}{\lambda_+-\lambda_-}\left(\xi_k\hat{u}_0^j+\xi_j\hat{u}_0^k\right)
   \\&=
   i\frac{\alpha}{2}\frac{e^{\lambda_+t}-e^{\lambda_-t}}{\lambda_+-\lambda_-}\left(\xi_k\hat{u}_0^j+\xi_j\hat{u}_0^k\right).
\end{split}
\end{equation}
The further calculations about the term $I_2$ are based on a key observation that
\begin{equation*}
\begin{split}
   \left(\lambda_++\epsilon|\xi|^2\right)\left(\lambda_-+\epsilon|\xi|^2\right)=\frac{\alpha\kappa}{2}|\xi|^2.
\end{split}
\end{equation*}
Indeed,
\begin{equation*}
\begin{split}
   I_2
   =&\kappa|\xi|\left(\xi_k{\hat{\sigma}}^j_0
   +\xi_j{\hat{\sigma}}^k_0\right)\int_0^te^{-(\lambda_++\lambda_-+\epsilon|\xi|^2)s}\frac{e^{\lambda_+s}-e^{\lambda_-s}}{\lambda_+-\lambda_-}
   ds
   \\=&
   \frac{2}{\alpha}\frac{\xi_k{\hat{\sigma}}^j_0
   +\xi_j{\hat{\sigma}}^k_0}{|\xi|}\left(\lambda_++\epsilon|\xi|^2\right)\left(\lambda_-+\epsilon|\xi|^2\right) \int_0^t\frac{e^{-\left(\lambda_-+\epsilon|\xi|^2\right)s}-e^{-\left(\lambda_++\epsilon|\xi|^2\right)s}}{\lambda_+-\lambda_-}
   ds
   \\=&
   \frac{2}{\alpha}\frac{\xi_k{\hat{\sigma}}^j_0
   +\xi_j{\hat{\sigma}}^k_0}{|\xi|}\frac{\left(\lambda_++\epsilon|\xi|^2\right)\left(1-e^{-\left(\lambda_-+\epsilon|\xi|^2\right)t}\right) -\left(\lambda_-+\epsilon|\xi|^2\right)\left(1-e^{-\left(\lambda_++\epsilon|\xi|^2\right)t}\right)}{\lambda_+-\lambda_-}
   \\=&
   \frac{2}{\alpha}\frac{\xi_k{\hat{\sigma}}^j_0
   +\xi_j{\hat{\sigma}}^k_0}{|\xi|}\frac{\lambda_+-\lambda_-+\left(\lambda_-+\epsilon|\xi|^2\right)e^{-\left(\lambda_++\epsilon|\xi|^2\right)t} -\left(\lambda_++\epsilon|\xi|^2\right)e^{-\left(\lambda_-+\epsilon|\xi|^2\right)t}}{\lambda_+-\lambda_-}
   \\=&
   \frac{2}{\alpha}\frac{\xi_k{\hat{\sigma}}^j_0
   +\xi_j{\hat{\sigma}}^k_0}{|\xi|}\left(1+e^{-\epsilon|\xi|^2t}\frac{\lambda_-e^{-\lambda_+t}-\lambda_+e^{-\lambda_-t}}{\lambda_+-\lambda_-}+\epsilon|\xi|^2 e^{-\epsilon|\xi|^2t}\frac{e^{-\lambda_+t}-e^{-\lambda_-t}}{\lambda_+-\lambda_-}\right).
\end{split}
\end{equation*}
Accordingly,
\begin{equation} \label{iI2}
\begin{split}
   i\frac{\alpha}{2}e^{-(\beta+\mu|\xi|^2)t}I_2
   =&i\frac{\xi_k{\hat{\sigma}}^j_0
   +\xi_j{\hat{\sigma}}^k_0}{|\xi|}\left(e^{-(\beta+\mu|\xi|^2)t}+e^{-\left[\beta+(\mu+\epsilon)|\xi|^2\right]t}\frac{\lambda_-e^{-\lambda_+t}-\lambda_+e^{-\lambda_-t}}{\lambda_+-\lambda_-}
   \right.
   \\ &\left.+\epsilon|\xi|^2 e^{-\left[\beta+(\mu+\epsilon)|\xi|^2\right]t}\frac{e^{-\lambda_+t}-e^{-\lambda_-t}}{\lambda_+-\lambda_-}\right)
   \\=&
   i\frac{\xi_k{\hat{\sigma}}^j_0
   +\xi_j{\hat{\sigma}}^k_0}{|\xi|}\left(e^{-(\beta+\mu|\xi|^2)t}+e^{\left(\lambda_++\lambda_-\right)t}\frac{\lambda_-e^{-\lambda_+t}-\lambda_+e^{-\lambda_-t}}{\lambda_+-\lambda_-}
   \right.
   \\ &\left.+\epsilon|\xi|^2 e^{\left(\lambda_++\lambda_-\right)t}\frac{e^{-\lambda_+t}-e^{-\lambda_-t}}{\lambda_+-\lambda_-}\right)
   \\=&
   i\frac{\xi_k{\hat{\sigma}}^j_0
   +\xi_j{\hat{\sigma}}^k_0}{|\xi|}\left(e^{-(\beta+\mu|\xi|^2)t}+\frac{\lambda_-e^{\lambda_-t}-\lambda_+e^{\lambda_+t}}{\lambda_+-\lambda_-}
   +\epsilon|\xi|^2 \frac{e^{\lambda_-t}-e^{\lambda_+t}}{\lambda_+-\lambda_-}\right).
\end{split}
\end{equation}
Collecting (\ref{ode-s}), (\ref{tau}), (\ref{2.22}), (\ref{iI1}) and (\ref{iI2}), and noticing the fact that
\begin{eqnarray*}
     \hat{\sigma}_0^j=i\left(\delta_{j,p}-\frac{\xi_j\xi_p}{|\xi|^2}\right)\frac{\xi_l}{|\xi|}\hat{\tau}_0^{l,p},
\end{eqnarray*} we finish the proof the corollary.
\end{proof}

\subsection{The low-frequency part}
\begin{proposition} \label{theorem-Green}  There exist positive constants $R=R(\alpha,\kappa,\beta)$, $\theta=\theta(\alpha,\kappa,\beta)$ and $K=K(\alpha,\kappa,\beta)$ such that
\begin{eqnarray}\label{pw}
     \left|\mathcal{G}_1(\xi, t)\right|,\left|\mathcal{G}_2(\xi, t)\right|,\left|\mathcal{G}_3(\xi,t)\right|,\left|\hat{\mathbb{G}}_{u,\sigma}(\xi,t)\right|,\left|\hat{\mathbb{G}}_{u,\tau}(\xi,t)\right|\leq Ke^{-\theta|\xi|^2t}
\end{eqnarray} hold for any $|\xi|\leq R$ and $t>0$.
\end{proposition}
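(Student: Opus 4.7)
The strategy is to analyze the two roots $\lambda_\pm$ of the characteristic polynomial in \eqref{eigen-relation} in the low-frequency regime, obtain uniform (in $\epsilon,\mu\in[0,1]$) bounds on $\lambda_\pm$ and on the gap $\lambda_+-\lambda_-$, and then plug these bounds into the closed-form expressions \eqref{G} and \eqref{Green-matrix}, and into Corollary \ref{coro}.

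\emph{Step 1 (eigenvalues at low frequency).} I would first look at the discriminant
\begin{equation*}
D(\xi)=\bigl[(\mu+\epsilon)|\xi|^2+\beta\bigr]^2-4|\xi|^2\Bigl[\epsilon(\mu|\xi|^2+\beta)+\tfrac{\alpha\kappa}{2}\Bigr].
\end{equation*}
Since $D(0)=\beta^2>0$ and every $\epsilon,\mu$-dependent correction carries an $|\xi|^2$ factor, there exists $R=R(\alpha,\kappa,\beta)>0$ such that $D(\xi)\ge \beta^2/2$ for all $|\xi|\le R$ and all $\epsilon,\mu\in[0,1]$. In particular $\lambda_\pm$ are real, and $\lambda_+-\lambda_-=\sqrt{D(\xi)}\ge \beta/\sqrt{2}$. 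A Taylor expansion then gives, uniformly in $\epsilon,\mu\in[0,1]$,
\begin{equation*}
\lambda_+(\xi)=-\Bigl(\epsilon+\tfrac{\alpha\kappa}{2\beta}\Bigr)|\xi|^2+O(|\xi|^4),\qquad
\lambda_-(\xi)=-\beta+O(|\xi|^2),
\end{equation*}
so that after possibly shrinking $R$ one has constants $\theta_1,\theta_2>0$ depending only on $(\alpha,\kappa,\beta)$ with
\begin{equation*}
\lambda_+(\xi)\le -\theta_1|\xi|^2,\qquad \lambda_-(\xi)\le -\theta_2,\qquad |\xi|\le R.
\end{equation*}

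\emph{Step 2 (bounds for $\mathcal{G}_1,\mathcal{G}_2,\mathcal{G}_3$).} Choose $\theta=\min\{\theta_1,\theta_2/R^2\}$, so that $e^{-\theta_2 t}\le e^{-\theta|\xi|^2 t}$ for $|\xi|\le R$. Using $|\lambda_+-\lambda_-|\ge \beta/\sqrt{2}$ in the denominators together with $|\lambda_+|\le C|\xi|^2$ and $|\lambda_-|\le C$, the three quantities in \eqref{G} are controlled by constant multiples of $e^{-\theta_1|\xi|^2 t}+|\xi|^2 e^{-\theta_2 t}$, which in turn is bounded by $Ke^{-\theta|\xi|^2 t}$ for a suitable $K=K(\alpha,\kappa,\beta)$.

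\emph{Step 3 (bounds for the Green matrices).} For $\hat{\mathbb{G}}_{u,\sigma}$, every entry in \eqref{Green-matrix} is one of the $\mathcal{G}_j$ multiplied by the bounded factor $1$, $\kappa|\xi|$, $\tfrac{\alpha}{2}|\xi|$, or $\epsilon|\xi|^2$ (all uniformly bounded for $|\xi|\le R$ and $\epsilon\le 1$), so Step 2 yields the desired pointwise bound. For $\hat{\mathbb{G}}_{u,\tau}$ I appeal to Corollary \ref{coro}: every term there is either one of the $\mathcal{G}_j$ times a bounded symbol in $\xi$, or the pure damping factor $e^{-(\beta+\mu|\xi|^2)t}\le e^{-\beta t}$, which again is dominated by $Ke^{-\theta|\xi|^2 t}$ on $|\xi|\le R$ after enlarging $K$.

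The main obstacle is to keep the constants $R$, $\theta$, $K$ independent of $\epsilon$ and $\mu$ (this uniformity is exactly what allows the decay rates in Theorem \ref{theorem_decay} to be independent of the viscosity/diffusion). Fortunately the $\epsilon,\mu$-dependent terms in $D$, in $\lambda_\pm$, and in the matrix prefactors $\epsilon|\xi|^2$ all carry at least one power of $|\xi|^2$, so they can be absorbed by shrinking $R$; no cancellation among oscillatory exponentials is needed in this low-frequency regime, since both eigenvalues turn out to be real.
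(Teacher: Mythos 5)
Your proposal is correct and takes essentially the same route as the paper: bound the discriminant $\mathfrak{D}(|\xi|)$ from below on a low-frequency ball chosen uniformly in $\epsilon,\mu\in[0,1]$, deduce $\lambda_+\le -c|\xi|^2$ together with the gap bound $\lambda_+-\lambda_-=\sqrt{\mathfrak{D}}\ge \tfrac{\sqrt2}{2}\beta$, and then estimate the explicit formulas for $\mathcal{G}_1,\mathcal{G}_2,\mathcal{G}_3$ and the matrix entries. The only cosmetic difference is that the paper works with $\lambda_-\le\lambda_+\le-\theta|\xi|^2$ and the identities $\mathcal{G}_2=\lambda_+\mathcal{G}_1+e^{\lambda_- t}$, $\mathcal{G}_3=-\lambda_+\mathcal{G}_1+e^{\lambda_+ t}$, whereas you keep the constant bound $\lambda_-\le-\theta_2$ and convert $e^{-\theta_2 t}\le e^{-\theta|\xi|^2 t}$ on $|\xi|\le R$ (note the domination of $e^{-\beta t}$ in Step 3 comes from this exponent comparison, not from enlarging $K$); both versions are valid.
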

\begin{proof}
 Denote
\begin{eqnarray*}
\mathfrak{D}(|\xi|)=\left[\left(\mu+\epsilon\right)|\xi|^2+\beta\right]^2-4|\xi|^2\left[\epsilon\left(\mu|\xi|^2+\beta\right)+\frac{\alpha\kappa}{2}\right].
\end{eqnarray*}
By a simple calculation, we obtain
\begin{equation}\label{Delta}
 	\begin{cases}
	 	\mathfrak{D}(|\xi|) \leq [(\mu+\epsilon)|\xi|^2 + \beta]^2\leq (2R^2+\beta)^2,\\[2mm]
	 	\mathfrak{D}(|\xi|)\geq\beta^2-4|\xi|^2(1+\beta+\frac{\alpha\kappa}{2})\geq \frac{\beta^2}{2}
 	\end{cases}
\end{equation}for $|\xi| \leq \displaystyle R = \min\Big\{1,\frac{\beta}{2\sqrt{2+2\beta+\kappa\alpha}}\Big\},$ where we use the assumption $\epsilon,\mu\le1$.
Next we rewrite $\lambda_+$ as
\begin{eqnarray}\label{expression-lamda+}
    \lambda_+=\frac{-2|\xi|^2\left[\epsilon\left(\mu|\xi|^2+\beta\right)+\frac{\alpha\kappa}{2}\right]}{(\mu+\epsilon)|\xi|^2+\beta +\sqrt{\mathfrak{D}(|\xi|)}}.
\end{eqnarray}
Since (\ref{Delta})$_1$ yields
\begin{equation*}
\begin{split}
    (\mu+\epsilon)|\xi|^2+\beta +\sqrt{\mathfrak{D}(|\xi|)}
    \leq 2R^2+\beta + \sqrt{(2R^2+\beta)^2}\leq 4R^2+2\beta,
\end{split}
\end{equation*}for $|\xi|\leq R$, then we have
\begin{equation} \label{lamda+upper}
\begin{split}
    \lambda_+\leq -\frac{2\left[\epsilon\left(\mu|\xi|^2+\beta\right)+\frac{\alpha\kappa}{2}\right]}{4R^2+2\beta}|\xi|^2\leq -\frac{\alpha\kappa}{4R^2+2\beta}|\xi|^2.
\end{split}
\end{equation}
Denote
$$\theta=\frac{\alpha\kappa}{2R_1^2+2\beta}.$$
Then by virtue of (\ref{lamda+upper}), we obtain the upper bound for $\lambda_+$ and $\lambda_-$, i.e.,
\begin{equation} \label{heat-decay}
\begin{split}
   \lambda_-\leq\lambda_+\leq -\theta|\xi|^2,\ {\rm and}\ \left|e^{\lambda_\pm t}\right|\leq e^{-\theta|\xi|^2t}
\end{split}
\end{equation} for all $|\xi|\leq R$.

By virtue of (\ref{Delta})$_2$, we have
\begin{equation*}
\begin{split}
   |\lambda_+-\lambda_-|=&\sqrt{\mathfrak{D}(|\xi|)}
   \geq \frac{\sqrt{2}}{2}\beta,
\end{split}
\end{equation*} for $|\xi|\leq R$, which, together with (\ref{heat-decay}), implies that
\begin{equation} \label{G1}
\begin{split}
   |\mathcal{G}_1(\xi,t)|\leq Ce^{-\theta|\xi|^2t}, \ {\rm for}\ {\rm all}\ |\xi|\leq R.
\end{split}
\end{equation}

Now we are in a position to estimate $\mathcal{G}_i(\xi,t)$ for $i=2,3$. We observe that
\begin{equation*}
\begin{split}
   \mathcal{G}_2(\xi,t)=\lambda_+\mathcal{G}_1(\xi,t)+e^{\lambda_-t}, \quad \mathcal{G}_3(\xi,t)=-\lambda_+\mathcal{G}_1(\xi,t)+e^{\lambda_+t}.
\end{split}
\end{equation*} and that
\begin{equation*}
\begin{split}
   |\lambda_+|\leq \frac{2|\xi|^2\left[\epsilon\left(\mu|\xi|^2+\beta\right)+\frac{\alpha\kappa}{2}\right]}{\beta}\leq \frac{2R^2\left(R^2+\beta+\frac{\alpha\kappa}{2}\right)}{\beta}.
\end{split}
\end{equation*} In addition, in view of (\ref{heat-decay}) and (\ref{G1}), we immediately obtain the estimates of $\mathcal{G}_i(\xi,t)$ for $i=2,3$. Then the upper bound of $|\hat{\mathbb{G}}_{u,\sigma}(\xi,t)|$ and $|\hat{\mathbb{G}}_{u,\tau}(\xi,t)|$ follows. The proof of Proposition \ref{theorem-Green} is complete.
\end{proof}

\section{Global existence and uniqueness}\label{sec3}
\setcounter{equation}{0}
The global existence and uniqueness of strong solutions for either the case ($\epsilon>0,\mu=0$) or the case ($\mu>0, \epsilon=0$) has been achieved in \cite{Hieber-Wen-Zi} and \cite{Elgindi-Liu}, respectively. In fact, it is not difficult to obtain the global existence and uniqueness of strong solution to the system \eqref{system} for the last case ($\epsilon>0,\mu>0$). For the sake of completeness and that some {\it a priori} estimates will be used next sections, we sketch the proof for all the cases (i.e., $\epsilon>0,\mu\ge0$ and $\mu>0, \epsilon\ge0$). To begin with, we have the following local existence and uniqueness result.
\begin{proposition}[Local existence]\label{localprop}
	 Suppose that $(u_0,\tau_0)\in H^3(\mathbb{R}^3)$, then there exists a positive constant $T_0 = T_0(\kappa,\alpha,\beta,\mu) $  for Case I ($\mu>0, \epsilon\geq 0$), or $T_0 = T_0(\kappa,\alpha,\beta,\epsilon)$ for Case II ($\epsilon>0,\mu\geq 0$), such that, the system \eqref{system} exists a unique local solution $(u^{\epsilon,\mu},\tau^{\epsilon,\mu})\in C([0,T_0];H^3(\mathbb{R}^3))$. 
\end{proposition}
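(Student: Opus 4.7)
\medskip
\noindent\textbf{Plan.} I would prove Proposition~\ref{localprop} by the classical route: construct approximate solutions, derive uniform $H^3$ estimates on a common time interval, and pass to the limit; uniqueness follows from a stability estimate in a weaker norm. The only delicate point is that one of the two diffusions may be absent, so the estimates must close using either the diffusion of $u$ (Case II) or the diffusion of $\tau$ together with the damping $\beta\tau$ (Case I), combined with the antisymmetric structure of the linear coupling.

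\medskip
\noindent\textbf{Step 1 (approximation).} First I set up a Friedrichs truncation $J_n=\mathcal{F}^{-1}\mathbbm{1}_{|\xi|\le n}\mathcal{F}$ (or a Picard iteration freezing the transport coefficient) and solve the regularised system as an ODE in $L^2$ by Cauchy--Lipschitz. The pressure is recovered via $\mathbb{P}$, which makes the $u$-equation an equation for a divergence-free field. Each approximate solution is global in time.

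\medskip
\noindent\textbf{Step 2 (uniform $H^3$ estimate).} For $0\le k\le 3$, apply $\nabla^k$ to the two equations, pair with $\nabla^k u$ and with $\tfrac{\kappa}{\alpha}\nabla^k\tau$, and integrate. The pressure drops out thanks to $\mathrm{div}\,u=0$, and the key cancellation is that after integration by parts
\begin{equation*}
\kappa\int\nabla^k\mathrm{div}\tau\cdot\nabla^k u\,dx+\frac{\kappa}{\alpha}\int\nabla^k(\alpha\mathbb{D}u)\!:\!\nabla^k\tau\,dx
=-\kappa\int\nabla^k\tau\!:\!\nabla^k\mathbb{D}u\,dx+\kappa\int\nabla^k\mathbb{D}u\!:\!\nabla^k\tau\,dx=0,
\end{equation*}
so the top-order linear coupling produces no contribution. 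This leaves
\begin{equation*}
\frac{d}{dt}\Bigl(\tfrac12\|\nabla^k u\|_{L^2}^2+\tfrac{\kappa}{2\alpha}\|\nabla^k\tau\|_{L^2}^2\Bigr)+\epsilon\|\nabla^{k+1}u\|_{L^2}^2+\tfrac{\kappa\mu}{\alpha}\|\nabla^{k+1}\tau\|_{L^2}^2+\tfrac{\kappa\beta}{\alpha}\|\nabla^k\tau\|_{L^2}^2=\mathcal{N}_k,
\end{equation*}
where $\mathcal{N}_k$ collects the contributions from $u\cdot\nabla u$, $u\cdot\nabla\tau$ and $Q(\nabla u,\tau)$. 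These are handled by the standard Kato--Ponce / Moser commutator estimates: the transport terms give $\int[\nabla^k,u\cdot\nabla]w\cdot\nabla^k w\,dx\lesssim\|\nabla u\|_{L^\infty}\|w\|_{H^k}^2$ (the leading piece $\int u\cdot\nabla\nabla^k w\cdot\nabla^k w\,dx$ vanishes by $\mathrm{div}\,u=0$), and $Q(\nabla u,\tau)$ is bilinear and controlled by $\|\nabla u\|_{L^\infty}\|\tau\|_{H^k}+\|\tau\|_{L^\infty}\|u\|_{H^{k+1}}$. Summing in $k$ and invoking $H^3(\mathbb{R}^3)\hookrightarrow W^{1,\infty}(\mathbb{R}^3)$, one arrives at an ODI
\begin{equation*}
\frac{d}{dt}E_n(t)\le C\,E_n(t)^{3/2},\qquad E_n(t):=\|u^n(t)\|_{H^3}^2+\tfrac{\kappa}{\alpha}\|\tau^n(t)\|_{H^3}^2,
\end{equation*}
which yields a uniform bound $E_n(t)\le 2E_n(0)$ on a common interval $[0,T_0]$ with $T_0$ depending only on $\|(u_0,\tau_0)\|_{H^3}$, $\kappa,\alpha,\beta$, and on $\mu$ (resp.\ $\epsilon$) through the constant hidden in the Moser estimates in the degenerate case.

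\medskip
\noindent\textbf{Step 3 (passage to the limit and uniqueness).} The uniform bound together with the equations yields $\partial_t u^n,\partial_t\tau^n$ bounded in $L^2(0,T_0;H^1)$ (resp.\ $L^2(0,T_0;H^{-1})$ when the diffusion is absent), so by Aubin--Lions a subsequence converges strongly in $C([0,T_0];H^{3-\delta}_{\mathrm{loc}})$ and weak-$\ast$ in $L^\infty(0,T_0;H^3)$. The nonlinearities pass to the limit, and strong continuity in $H^3$ is obtained by a Bona--Smith argument: smooth the initial data in $H^{3+\eta}$, use the uniform estimate to get an $H^3$-Cauchy sequence via interpolation, and invoke the energy identity. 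Uniqueness follows by subtracting two solutions, writing the equation for the difference, and running the very same cancellation at the $L^2$ level; Gr\"onwall closes the estimate.

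\medskip
\noindent\textbf{Main obstacle.} The technical heart is closing the estimate when one dissipation is missing. In Case I with $\epsilon=0$ there is no parabolic smoothing of $u$, so the top-order coupling $\kappa\nabla^3\mathrm{div}\tau\cdot\nabla^3 u$ has nothing to be absorbed into and must be removed purely algebraically by the symmetry of $\tau$ together with the cancellation against $\alpha\nabla^3\mathbb{D}u:\nabla^3\tau$; without this cancellation the scheme would lose one derivative. Symmetrically, in Case II with $\mu=0$ the dissipation on $\tau$ is only the zeroth-order damping $\beta\tau$, so one cannot absorb any derivative of $\tau$ from the nonlinearity $u\cdot\nabla\tau$ or from $Q(\nabla u,\tau)$ into a parabolic term, and one must rely on the commutator structure and $\mathrm{div}\,u=0$ to ensure that no $\|\nabla^4\tau\|_{L^2}$ ever appears. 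Identifying these two algebraic mechanisms is what makes the proof work uniformly across both cases.
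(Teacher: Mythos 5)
Your route (Friedrichs/Galerkin approximation, uniform $H^3$ energy estimates, Aubin--Lions, Bona--Smith, $L^2$ stability for uniqueness) is a legitimate alternative to the paper's one-line proof, which simply invokes the contraction mapping theorem following Kawashima's framework for hyperbolic--parabolic composite systems. The antisymmetric cancellation of the linear coupling that you isolate is indeed the same one the paper uses in its Section 3 energy estimates. However, there is a concrete gap in Step 2: the claimed ODI $\frac{d}{dt}E_n\le CE_n^{3/2}$ does not follow from the bounds you state. Your own estimate $\|\nabla^3 Q(\nabla u,\tau)\|_{L^2}\lesssim\|\nabla u\|_{L^\infty}\|\tau\|_{H^3}+\|\tau\|_{L^\infty}\|u\|_{H^4}$ contains $\|u\|_{H^4}$, which is \emph{not} controlled by $E_n$; the term $\langle\tau\,\nabla^4u,\nabla^3\tau\rangle$ coming from $\nabla^3Q$ is genuinely of fourth order in $u$ and no commutator or divergence-free structure removes it ($Q$ is not a transport term). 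Closing the estimate requires the dissipation: in Case II one absorbs it via Young's inequality into $\epsilon\|\nabla^4u\|_{L^2}^2$ (yielding a constant $\epsilon^{-1}$ and an ODI of the form $E'\le C(\epsilon)(E^{3/2}+E^2)$, still sufficient for local existence); in Case I, where $\epsilon$ may vanish, one must first integrate by parts to rewrite $\langle\nabla^3Q,\nabla^3\tau\rangle$ as $-\langle\nabla^2Q,\nabla^2\Delta\tau\rangle$ and absorb the resulting $\|\nabla^4\tau\|_{L^2}$ into $\mu\|\nabla\tau\|_{H^3}^2$, exactly as the paper does for the term $\mathcal{N}_3$ in Section 3. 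Attributing the $\mu$- (resp.\ $\epsilon$-) dependence of $T_0$ to ``the constant hidden in the Moser estimates'' papers over this: the dependence enters precisely through these absorption steps.

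Relatedly, your ``Main obstacle'' paragraph misidentifies the dangerous terms. In Case I the linear coupling at top order is removed algebraically for free by the cancellation you wrote down, so it is not the obstacle; the obstacle is the $\tau\,\nabla^4u$ piece of $\nabla^3Q$ just described. In Case II the concern is not that $\|\nabla^4\tau\|_{L^2}$ might appear (the transport term is tamed by the commutator and $\mathrm{div}\,u=0$, and $Q$ contains no derivatives of $\tau$); the concern is again $\|\nabla^4u\|_{L^2}$, which is present and must be absorbed by the viscosity. Once these two absorption mechanisms are put in place, the rest of your scheme (uniform bound on a common interval, compactness, Bona--Smith continuity, $L^2$ uniqueness) goes through.
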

\begin{proof}
		The proof can be done by using the standard contracting map theorem. Please refer for instance to \cite{Kawashima1983}.
\end{proof}

\bigskip

Next, we give some {\it a priori} estimates of
system \eqref{system}. The solutions usually should depend on $\epsilon$ and $\mu$. For brevity, we omit the superscripts throughout the rest of the paper.

Assume for the moment that
\begin{equation}\label{3.1}
\sup\limits_{0\leq t\leq T}
 \|(u,\tau)\|_{H^3}(t)\leq \delta,
\end{equation}
for some $T\in(0,T^*)$ where $T^*$ is the maximal time of
existence of the solutions as in Proposition \ref{localprop}, and the small constant $\delta\ge4\varepsilon_0>0$ is determined by (\ref{delta-epsilon}). The rest of this section tends to prove that
\begin{equation}\label{goal}
\sup\limits_{0\leq t\leq T}
 \|(u,\tau)\|_{H^3}(t)\leq \frac12\delta.
\end{equation}
Then, the global existence for system \eqref{system} can be deduced by using (\ref{goal}) and a standard continuity argument. With the regularity, the solution is unique.

\bigskip

Throughout the rest of the paper, we use the notation $``\cdot\lesssim \cdot"$ to represent that $``\cdot \le C
\cdot"$ for some generic known constants $C>0$ such as the constants yielded from the Sobolev's inequality and the Young's inequality. The notation $<\cdot,\cdot>$ represents the inner product in $L^2$ space.

\subsection{{\it A priori} estimate on the basic energy}
Firstly, applying $\nabla^k$ ($k=0,1,2,3$) to (\ref{system}), and then multiplying (\ref{system})$_1$ by $\alpha\nabla^k u$ and (\ref{system})$_2$ by $\kappa\nabla^k \tau$, we have from integration by parts that
\begin{equation} \label{energy}
\begin{split}
     &\frac12\frac{d}{dt}\left(\alpha\|u\|_{H^3}^2+\kappa\|\tau\|_{H^3}^2\right)+\alpha\epsilon\|\nabla u\|_{H^3}^2+\kappa\mu\|\nabla\tau\|_{H^3}^2+\kappa\beta\|\tau\|_{H^3}^2
     \\ =&
     -\alpha\sum_{k=0}^3<\nabla^k(u\cdot\nabla u),\nabla^k u>-\kappa\sum_{k=0}^3<\nabla^k(u\cdot\nabla \tau),\nabla^k\tau>
     \\ &
     +\kappa\sum_{k=0}^3<\nabla^kQ(\nabla u,\tau),\nabla^k\tau>=\sum_{p=1}^3\mathcal{N}_p,
\end{split}
\end{equation} where we have used the following cancellation structure due to the symmetry of $\tau$ and integration by parts,
\begin{equation*}
\begin{split}
    \alpha\kappa<\nabla^k\mathbb{D}u,\nabla^k\tau>=\alpha\kappa<\nabla^ku_{i,j},\nabla^k\tau^{ij}>=-\alpha\kappa<\nabla^ku,\nabla^k{\rm div}\tau>.
\end{split}
\end{equation*}

Now we are in a position to estimate $\mathcal{N}_k$ term by term. By using the H\"older inequality, estimates on the commutator and the Sobolev imbedding inequality that $\|f\|_{L^\infty}\lesssim \|\nabla f\|_{H^1}$, one has
\begin{equation*}
\begin{split}
    \mathcal{N}_1&=-\alpha\sum_{k=0}^3\left(<\nabla^k(u\cdot\nabla u),\nabla^k u>-<(u\cdot\nabla)\nabla^k u,\nabla^k u>\right)
    \\ & \lesssim
    \alpha\sum_{k=0}^3\left\|\nabla^k(u\cdot\nabla u)-(u\cdot\nabla)\nabla^k u\right\|_{L^2}\|u\|_{H^3}
    \lesssim
    \alpha\|\nabla u\|_{L^\infty}\|\nabla u\|_{H^2}\|u\|_{H^3}
    \\ & \lesssim
    \alpha\delta\|\nabla u\|_{H^2}^2.
\end{split}
\end{equation*}
Similarly,
\begin{equation*}
\begin{split}
    \mathcal{N}_2&=-\kappa\sum_{k=0}^3\left(<\nabla^k(u\cdot\nabla \tau),\nabla^k \tau>-<(u\cdot\nabla)\nabla^k \tau,\nabla^k \tau>\right)
    \\ & \lesssim
    \kappa\sum_{k=0}^3\left\|\nabla^k(u\cdot\nabla \tau)-(u\cdot\nabla)\nabla^k \tau\right\|_{L^2}\|\tau\|_{H^3}
    \\ & \lesssim
    \kappa\left(\|\nabla u\|_{L^\infty}\|\nabla \tau\|_{H^2}+\|\nabla\tau\|_{L^\infty}\|\nabla u\|_{H^2}\right)\|\tau\|_{H^3}
    \\ & \lesssim
    \kappa\delta\left(\|\nabla u\|_{H^2}^2+\|\nabla \tau\|_{H^2}^2\right).
\end{split}
\end{equation*} We remark that the estimates on $\mathcal{N}_1$ and $\mathcal{N}_2$ imply that we need some supplement of dissipation independent of $\epsilon$ or $\mu$.

The discussion on $\mathcal{N}_3$ will be more complicated, which is divided into two cases.

Case I (given $\mu>0$): Note that
\begin{equation*}
    \mathcal{N}_3=\kappa<Q(\nabla u,\tau),\tau>-\kappa\sum\limits_{k=1}^3<\nabla^{k-1}Q(\nabla u,\tau),\nabla^{k-1}\Delta\tau>.
\end{equation*} Then one has
\begin{equation*}
\begin{split}
    \mathcal{N}_3 &\lesssim \kappa\left(\|\nabla u\|_{L^\infty}\|\tau\|_{L^2}^2+\|Q(\nabla u,\tau)\|_{H^2}\|\nabla \tau\|_{H^3}\right)
    \\ &\lesssim
    \kappa\delta\|\tau\|_{L^2}^2+\left(\|\nabla u\|_{L^\infty}\|\tau\|_{H^2}+\|\tau\|_{L^\infty}\|\nabla u\|_{H^2}\right)\|\nabla \tau\|_{H^3}
    \\ & \lesssim
    \kappa\delta\|\tau\|_{L^2}^2+\kappa\delta\|\nabla u\|_{H^2}\|\nabla \tau\|_{H^3}
    \\ & \leq
    C\kappa\delta\|\tau\|_{L^2}^2+\frac14\kappa\mu\|\nabla \tau\|_{H^3}^2+C\mu^{-1}\delta^2\|\nabla u\|_{H^2}^2.
\end{split}
\end{equation*}

Case II (given $\epsilon>0$): By using the H\"older inequality and estimates on the commutator, one has
\begin{equation*}
\begin{split}
    \mathcal{N}_3&\lesssim \kappa\left(\|\nabla u\|_{L^\infty}\|\tau\|_{H^3}+\|\tau\|_{L^\infty}\|\nabla u\|_{H^3}\right)\|\tau\|_{H^3}
    \\ & \lesssim
    \kappa\|\tau\|_{H^3}^2\|\nabla u\|_{H^3}
    \lesssim
    \kappa\delta\|\tau\|_{H^3}\|\nabla u\|_{H^3}
    \leq
    \frac14\alpha\epsilon\|\nabla u\|_{H^3}^2+C\alpha^{-1}\epsilon^{-1}\kappa^2\delta^2\|\tau\|_{H^3}^2.
\end{split}
\end{equation*}

In conclusion, we have
\begin{equation} \label{basic-energy}
\begin{split}
     &\frac12\frac{d}{dt}\left(\alpha\|u\|_{H^3}^2+\kappa\|\tau\|_{H^3}^2\right)+\alpha\epsilon\|\nabla u\|_{H^3}^2+\kappa\mu\|\nabla\tau\|_{H^3}^2+\kappa\beta\|\tau\|_{H^3}^2
     \\ \leq &
     \begin{cases}
    C\delta(\alpha+\kappa+\mu^{-1}\delta)\|\nabla (u,\tau)\|_{H^2}^2+\frac14\kappa\mu\|\nabla \tau\|_{H^3}^2+C\kappa\delta\|\tau\|_{L^2}^2, \ {\rm for}\ {\rm given}\ \mu>0;\\
    C\delta(\alpha+\kappa)\|\nabla (u,\tau)\|_{H^2}^2
    +\frac14\alpha\epsilon\|\nabla u\|_{H^3}^2+C\alpha^{-1}\epsilon^{-1}\kappa^2\delta\|\tau\|_{H^3}^2, \ {\rm for}\ {\rm given}\ \epsilon>0.
    \end{cases}
\end{split}
\end{equation}

\subsection{Supplement of dissipation}
Applying the operator $\Lambda^k$ $(k=1,2,3)$ to the (\ref{u-sigma})$_1$, and multiplying the resulting equation by $\Lambda^{k-1}\sigma$, meanwhile, applying the operator $\Lambda^{k-1}$ to the (\ref{u-sigma})$_2$ and multiplying the resulting equation by $\Lambda^k u$, and then, summing up all the results, one has
\begin{equation} \label{supplement}
\begin{split}
    &\partial_t\sum_{k=1}^3<\Lambda^{k-1}\sigma,\Lambda^k u>+\kappa\sum_{k=1}^3\|\Lambda^k\sigma\|_{L^2}^2+\frac{\alpha}{2}\sum_{k=1}^3\|\Lambda^k u\|_{L^2}^2
    \\ =&
    \underbrace{-\sum_{k=1}^3<\Lambda^k\mathbb{P}\left(u\cdot\nabla u\right),\Lambda^{k-1}\sigma>}_{=\sum_{k=1}^3<\Lambda^{k-1}\mathbb{P}\left(u\cdot\nabla u\right),\Lambda^k\sigma>}+\underbrace{\epsilon\sum_{k=1}^3<\Delta\Lambda^ku,\Lambda^{k-1}\sigma>}_{=-\epsilon\sum_{k=1}^3<\Delta\Lambda^{k-1} u,\Lambda^k\sigma>}
    \\&
    -\sum_{k=1}^3<\Lambda^{k-2}\mathbb{P}{\rm div}\left(u\cdot\nabla\tau\right),\Lambda^k u>
    +\mu\sum_{k=1}^3<\Delta\Lambda^{k-1}\sigma,\Lambda^k u>
    \\&
    -\beta\sum_{k=1}^3<\Lambda^{k-1}\sigma,\Lambda^k u>
    +\sum_{k=1}^3<\Lambda^{k-2}\mathbb{P}{\rm div}Q(\nabla u,\tau),\Lambda^k u>
    \\ =&
    \sum_{i=1}^{6} J_i.
\end{split}
\end{equation}
By using the H\"older inequality and estimates on commutator, one has
\begin{equation*}
\begin{split}
    |J_1|+|J_3|+|J_6|
    \lesssim &\left(\|u\|_{L^\infty}\|\nabla u\|_{H^2}+\|\nabla u\|_{L^\infty}\|u\|_{H^2}\right)\|\nabla\sigma\|_{H^2}
    \\&
    +\left(\|u\|_{L^\infty}\|\nabla \tau\|_{H^2}+\|\nabla \tau\|_{L^\infty}\|u\|_{H^2}\right)\|\nabla u\|_{H^2}
    \\&
    +\left(\|\nabla u\|_{L^\infty}\|\tau\|_{H^2}+\|\tau\|_{L^\infty}\|\nabla u\|_{H^2}\right)\|\nabla u\|_{H^2}
    \\ \lesssim &
    \|u\|_{H^2}\|\nabla u\|_{H^2}\|\nabla \sigma\|_{H^2}+\|u\|_{H^2}\|\nabla\tau\|_{H^2}\|\nabla u\|_{H^2}
    \\ &
    +\|\nabla u\|_{H^2}\|\tau\|_{H^2}\|\nabla u\|_{H^2}
    \\ \lesssim &
    \delta\left(\|\nabla u\|_{H^2}^2+\|\nabla\tau\|_{H^2}^2\right),
\end{split}
\end{equation*} and
\begin{equation*}
\begin{split}
    &|J_2|+|J_4|+|J_5| \\
    \lesssim &\epsilon\|\nabla u\|_{H^3}\|\nabla\sigma\|_{H^2}+\mu\|\nabla\sigma\|_{H^3}\|\nabla u\|_{H^2}
    +\beta\|\sigma\|_{H^2}\|\nabla u\|_{H^2}
    \\ \leq &
    \epsilon_1\kappa\|\nabla\sigma\|_{H^2}^2+C\epsilon_1^{-1}\kappa^{-1}\epsilon\|\nabla u\|_{H^3}^2+\epsilon_1\alpha\|\nabla u\|_{H^2}^2+C\epsilon_1^{-1}\alpha^{-1}\mu\|\nabla\sigma\|_{H^3}^2
    \\ &
    +\epsilon_1\alpha\|\nabla u\|_{H^2}^2+C\epsilon_1^{-1}\alpha^{-1}\beta^2\|\sigma\|_{H^2}^2.
\end{split}
\end{equation*}
In conclusion, by choosing $\epsilon_1$ sufficiently small, one has
\begin{equation} \label{supplement-1}
\begin{split}
    &\partial_t\sum_{k=1}^3<\Lambda^{k-1}\sigma,\Lambda^k u>+\frac{\kappa}{2}\sum_{k=1}^3\|\Lambda^k\sigma\|_{L^2}^2+\frac{\alpha}{4}\sum_{k=1}^3\|\Lambda^k u\|_{L^2}^2
    \\ \lesssim &
    \kappa^{-1}\epsilon\|\nabla u\|_{H^3}^2+\alpha^{-1}\mu\|\nabla\tau\|_{H^3}^2
    +\alpha^{-1}\beta^2\|\tau\|_{H^2}^2
    +\delta\|\nabla(u,\tau)\|_{H^2}^2.
\end{split}
\end{equation}

\subsection{Total energy}
Combining the discussions in the previous two parts, one obtains the following total energy estimates from zero-order to third-order terms,
\begin{equation}\label{Total_energy}
\begin{split}
    & \frac{\rm d}{{\rm d}t}\mathcal{E}(t)+\alpha\epsilon\|\nabla u\|_{H^3}^2+\kappa\mu\|\nabla\tau\|_{H^3}^2+\kappa\beta\|\tau\|_{H^3}^2
    +\epsilon_2\sum_{i=1}^3\left(\frac{\kappa}{2}\|\Lambda^i\sigma\|_{L^2}^2+\frac{\alpha}{4}\|\Lambda^iu\|_{L^2}^2\right)
    \\ \leq &
    C\epsilon_2\left(\kappa^{-1}\epsilon\|\nabla u\|_{H^3}^2+\alpha^{-1}\mu\|\nabla\tau\|_{H^3}^2
    +\alpha^{-1}\beta^2\|\tau\|_{H^2}^2\right)
    \\+ &
    \begin{cases}
    C\delta(\alpha+\kappa+\mu^{-1}\delta+\epsilon_2)\|\nabla (u,\tau)\|_{H^2}^2+\frac14\kappa\mu\|\nabla \tau\|_{H^3}^2+C\kappa\delta\|\tau\|_{L^2}^2, \ {\rm for}\ {\rm given}\ \mu>0;\\
    C\delta(\alpha+\kappa+\epsilon_2)\|\nabla (u,\tau)\|_{H^2}^2+\frac14\alpha\epsilon\|\nabla u\|_{H^3}^2+C\alpha^{-1}\epsilon^{-1}\kappa^2\delta^2\|\tau\|_{H^3}^2, \ {\rm for}\ {\rm given}\ \epsilon>0.
    \end{cases}
\end{split}
\end{equation}
where
$$
\mathcal{E}(t)=\frac12\left(\alpha\|u\|_{H^3}^2+\kappa\|\tau\|_{H^3}^2\right)
  +\epsilon_2\sum_{k=1}^3<\Lambda^{k-1}\sigma,\Lambda^k u>= O(\tilde{\mathcal{E}}(t)),
$$ by using the Young's inequality for some small positive $\epsilon_2=\epsilon_2(\alpha,\kappa,\beta)$, and
$\tilde{\mathcal{E}}(t)=\|(u,\tau)\|_{H^3}^2$.

By further choosing suitable $\epsilon_2$ and $\delta$ which depend only on $\alpha$, $\kappa$, $\beta$ and $\epsilon$ for the case given $\epsilon>0$, and on $\alpha$, $\kappa$, $\beta$ and $\mu$ for the case given $\mu>0$, we have
\begin{equation*}
\begin{split}
    \frac{\rm d}{{\rm d}t}\mathcal{E}(t)\leq
    0.
\end{split}
\end{equation*} Then, by integrating the above inequality over $[0,t]$ and omitting some discussion on the generic constant for simplicity, one can get
\begin{equation*}
\begin{split}
    \left(\alpha\|u\|_{H^3}^2+\kappa\|\tau\|_{H^3}^2\right)(t)\leq 2\left(\alpha\|u_0\|_{H^3}^2+\kappa\|\tau_0\|_{H^3}^2\right)\leq 2(\alpha+\kappa)\varepsilon_0^2.
\end{split}
\end{equation*} Note also that
$$
\left(\alpha\|u\|_{H^3}^2+\kappa\|\tau\|_{H^3}^2\right)(t)\geq  \min\{\alpha,\kappa\}\|(u,\tau)\|_{H^3}(t).
$$
Therefore, choosing
\begin{equation}\label{delta-epsilon}
\frac{2(\alpha+\kappa)\varepsilon_0^2}{\min\{\alpha,\kappa\}}<\frac{\delta}{2}.
\end{equation}
The proof of (\ref{goal}) is complete.\qed

With (\ref{goal}) and (\ref{Total_energy}), it is easy to obtain (\ref{1.2}).

\section{Decay estimates for the nonlinear system}\label{sec4}
\setcounter{equation}{0}
The aim of this section is to establish the upper and lower decay rates of the solution to the system \eqref{system}. For simplicity,  let us denote $U=(u,\tau)^\top$.
\subsection{Upper time-decay estimates}
The proof will be achieved  in the following several steps.\par
\noindent{\bf Step 1: First order decay}\par
We first give the decay estimates for the low-frequency part of $U$.
\begin{lemma} \label{nonlinear-estimates}
For the nonlinear terms of (\ref{system}), we directly have the following estimates.
\begin{align} \label{F1}
   \|\mathcal{M}_1\|_{L^1}&\lesssim \|u\|_{L^2}\|\nabla u\|_{L^2},
   \\ \label{F2}
   \|\mathcal{M}_2\|_{L^1}&\lesssim  \|u\|_{L^2}\|\nabla\tau\|_{L^2}
   +\|\tau\|_{L^2}\|\nabla u\|_{L^2},\\ \label{F21}
      \|\mathcal{M}_1\|_{L^2}&\lesssim \|\nabla u\|_{H^1}\|\nabla u\|_{L^2},
   \\ \label{F22}
   \|\mathcal{M}_2\|_{L^2}&\lesssim  \|\nabla u\|_{H^1}\|\nabla\tau\|_{L^2}
   +\|\nabla \tau\|_{H^1}\|\nabla u\|_{L^2},
\end{align}
where
\begin{equation*}
\begin{split}
&\mathcal{M}_1=-\mathbb{P}\left(u\cdot\nabla u\right),\\
&\mathcal{M}_2=-u\cdot\nabla \tau+Q(\nabla u,\tau).
\end{split}
\end{equation*}
Moreover, the following time-decay estimate for the low-frequency part of the solution to the nonlinear system (\ref{system}), i.e.,
\begin{equation} \label{4.3}
\begin{split}
\left(
\int_{|\xi|\leq R}
 |\xi|^{2k}|\hat{U}|^2
  {\rm d}\xi
\right)^\frac{1}{2}
 \lesssim&
   (1+t)^{-\frac32(\frac12-\frac1q)-\frac{k}{2}}
    \|\hat{U}_0\|_{L^q_\xi}\\
 & +
  \int_0^t
  (1+t-s)^{-\frac32(\frac12-\frac1q)-\frac{k}{2}}
  \left\|\left(\mathcal{M}_1,\mathcal{M}_2\right)^\top(s)\right\|_{L^p}
  {\rm d}s,
\end{split}
\end{equation} holds for all $t>0$, where $q\geq 2, \frac{1}{p} + \frac{1}{q} = 1$ and $k\in[0,3]$.
\end{lemma}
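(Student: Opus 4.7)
The bilinear bounds (\ref{F1})--(\ref{F22}) are pure product estimates. For (\ref{F1})--(\ref{F2}) I would apply H\"older with exponents $(2,2)$ on each bilinear term, noting that $Q(\nabla u,\tau)$ is a linear combination of products of $\nabla u$ and $\tau$. Since $\mathbb{P}$ is not bounded on $L^1(\R^3)$, the ``$L^1$''-bound in (\ref{F1}) must be interpreted through the Fourier side: the multiplier symbol of $\mathbb{P}$ has operator norm $\leq 1$, so $|\widehat{\mathbb{P}f}(\xi)|\leq |\hat f(\xi)|$ pointwise, which is all that subsequently enters. For (\ref{F21})--(\ref{F22}) I would again use H\"older, placing the velocity (or $\tau$) factor in $L^\infty$ and invoking the Gagliardo--Nirenberg--Sobolev embedding $\|v\|_{L^\infty(\R^3)}\les \|\nabla v\|_{L^2}^{1/2}\|\nabla^2 v\|_{L^2}^{1/2}\les \|\nabla v\|_{H^1}$.

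\textbf{Duhamel representation.} For (\ref{4.3}) I would combine Corollary \ref{coro} with the variation-of-constants formula applied to the nonlinear system (\ref{system}) to write
\begin{equation*}
\hat U(\xi,t)=\hat{\mathbb{G}}_{u,\tau}(\xi,t)\hat U_0(\xi)+\int_0^t\hat{\mathbb{G}}_{u,\tau}(\xi,t-s)\bigl(\widehat{\mathcal{M}_1},\widehat{\mathcal{M}_2}\bigr)^{\top}(\xi,s)\,\mathrm{d}s.
\end{equation*}
Multiplying by $|\xi|^k\mathbbm{1}_{\{|\xi|\leq R\}}$, taking the $L^2_\xi$ norm and applying Minkowski's integral inequality, and then inserting the pointwise bound $|\hat{\mathbb{G}}_{u,\tau}(\xi,t)|\leq K e^{-\theta|\xi|^2 t}$ from Proposition \ref{theorem-Green} (with $K,\theta,R$ \emph{independent} of $\epsilon,\mu\in[0,1]$) reduces everything to estimating, for a generic $f$,
\begin{equation*}
I_k(t;f):=\Bigl(\int_{|\xi|\leq R}|\xi|^{2k}e^{-2\theta|\xi|^2 t}|\hat f(\xi)|^2\,\mathrm{d}\xi\Bigr)^{1/2}.
\end{equation*}

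\textbf{Heat-kernel estimate and main obstacle.} I would bound $I_k(t;f)$ by H\"older on the frequency side with conjugate pair $(q/2,q/(q-2))$ to peel off $\|\hat f\|_{L^q_\xi}\les \|f\|_{L^p}$ via the Hausdorff--Young inequality (valid since $q\geq 2$, $1/p+1/q=1$); the residual Gaussian factor $\bigl(\int_{\R^3}|\xi|^{2kq/(q-2)}e^{-2\theta q|\xi|^2 t/(q-2)}\,\mathrm{d}\xi\bigr)^{(q-2)/(2q)}$ scales (via $\xi=y/\sqrt{t}$) to $Ct^{-\frac32(\frac12-\frac1q)-\frac k2}$ for $t\geq 1$ and is $O(1)$ for $t\leq 1$, hence is absorbed by $(1+t)^{-\frac32(\frac12-\frac1q)-\frac k2}$. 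Applying this to the initial-data piece, and under the Duhamel integral to $f=(\mathcal{M}_1,\mathcal{M}_2)^{\top}(s)$ with $t$ replaced by $t-s$, yields (\ref{4.3}). The calculation itself is essentially routine; its one delicate feature is the \emph{uniformity} in $\epsilon,\mu$ of the kernel decay, which rests entirely on Proposition \ref{theorem-Green} and is precisely what will allow the subsequent decay rates not to degenerate when either diffusion coefficient vanishes.
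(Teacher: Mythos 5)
Your proposal is correct and follows essentially the same route as the paper: the Duhamel formula on the Fourier side, the uniform pointwise bound $|\hat{\mathbb{G}}_{u,\tau}(\xi,t)|\leq Ke^{-\theta|\xi|^2t}$ from Proposition \ref{theorem-Green}, and H\"older plus Hausdorff--Young with the scaling $\xi\mapsto\xi\sqrt{t}$ to produce the rate $(1+t)^{-\frac32(\frac12-\frac1q)-\frac k2}$, the paper likewise treating the product estimates (\ref{F1})--(\ref{F22}) as immediate. Your remark that the Leray projector is not $L^1$-bounded and that (\ref{F1}) should be read through the pointwise Fourier bound $|\widehat{\mathbb{P}f}(\xi)|\leq|\hat f(\xi)|$ is a legitimate refinement of the statement, and it is exactly what is used downstream, so it does not alter the argument.
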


\begin{proof} We only prove (\ref{4.3}).
From the Duhamel's principle, we have
\begin{equation} \label{duhamel}
U(t)
 =
  \mathbb{G}_{u,\tau}(t)\ast U_0
  +
  \int_0^t
   \mathbb{G}_{u,\tau}(t-s)\ast \left(\mathcal{M}_1,\mathcal{M}_2\right)^\top(s)
   {\rm d}s.
\end{equation}
It follows from Proposition \ref{theorem-Green} and Hausdorff-Young inequality that
\begin{equation*}
   \begin{split}
   &\left(\int_{|\xi|\leq R}|\xi|^{2k}|\hat{\mathbb{G}}_{u,\tau}|^2|\hat{U}_0|^2{\rm d}\xi\right)^{\frac12}
     \\ \lesssim&
    \|\hat{U}_0\|_{L^q_\xi}\left(\int_{|\xi|\leq R}(|\xi|^{2k}e^{-\theta|\xi|^2t})^\frac{q}{q-2}{\rm d}\xi\right)^{\frac{q-2}{2q}}
    \\ \lesssim& \|U_0\|_{L^p}\left(\int_{|\zeta|\leq R\sqrt{t}}(|\zeta|^{2k}t^{-k}e^{-\theta|\zeta|^2})^\frac{q}{q-2}t^{-\frac32}{\rm d}\zeta\right)^{\frac{q-2}{2q}}
    \\ \lesssim&
    \|U_0\|_{L^p}(1+t)^{-\frac32(\frac12-\frac1q)-\frac{k}{2}},
   \end{split}
\end{equation*}
where $q\geq 2, \frac{1}{p} + \frac{1}{q} = 1.$
Then, by similar calculations, we get
\begin{equation}\label{4.20}
\begin{split}
&\left(\int_{|\xi|\leq R}
 |\xi|^{2k}|\hat U(t)|^2
  {\rm d}\xi
\right)^\frac{1}{2}
\\  \lesssim&
  (1+t)^{-\frac32(\frac12-\frac1q)-\frac{k}{2}}
   \|U_0\|_{L^p}+
  \int_0^t
   (1+t-s)^{-\frac32(\frac12-\frac1q)-\frac{k}{2}}
    \left\|\left(\mathcal{M}_1,\mathcal{M}_2\right)^\top(s)\right\|_{L^p}
     {\rm d}s,
\end{split}
\end{equation}
where $q\geq 2, \frac{1}{p} + \frac{1}{q} = 1.$ The proof is complete.
\end{proof}

\bigskip

\begin{lemma}\label{lem-upper_bound1}
	Under the assumptions of Part (i) in Theorem \ref{theorem_decay}, it holds that
\begin{equation*}\sum\limits_{1\leq k\leq 3}\|\nabla^k(u,\tau)(t)\|_{L^2}\le C(1+t)^{-\frac{5}{4}}
  \end{equation*} for any $t\ge0$.
\end{lemma}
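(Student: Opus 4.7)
My plan is to prove Lemma \ref{lem-upper_bound1} by a two-stage bootstrap that couples the energy estimate of Section \ref{sec3} with the low-frequency Duhamel bound (\ref{4.3}) through a Fourier-splitting argument. In each stage I derive a Lyapunov-type differential inequality of the form $\frac{d}{dt}\mathcal{E}_*(t) + c\mathcal{E}_*(t) \lesssim \|\text{low-frequency part}\|_{L^2}^2$, plug in the decay rate from Lemma \ref{nonlinear-estimates}, and close by Gronwall. The first stage produces a preliminary rate $\|U(t)\|_{H^3}\lesssim (1+t)^{-3/4}$, and the second upgrades it to the claimed $\|\nabla U(t)\|_{H^2}\lesssim (1+t)^{-5/4}$.

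For the first stage, the total energy estimate (\ref{Total_energy}) yields, after absorbing the nonlinearities under the smallness hypothesis, $\frac{d}{dt}\mathcal{E}(t) + c\mathcal{D}(t)\le 0$ where $\mathcal{E}(t)\sim \|U(t)\|_{H^3}^2$ and $\mathcal{D}(t)\ge c(\|\nabla u\|_{H^2}^2 + \|\tau\|_{H^3}^2) = c\mathcal{E}(t) - c\|u\|_{L^2}^2$; note the $\|\nabla u\|_{H^2}^2$ dissipation is provided uniformly in $\epsilon,\mu$ by the supplement (\ref{supplement-1}), while the $\|\tau\|_{H^3}^2$ part comes from the damping $\kappa\beta\|\tau\|_{H^3}^2$. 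Writing $\|u\|_{L^2}^2 \le \|u_L\|_{L^2}^2 + R^{-2}\|\nabla u\|_{L^2}^2$ with $R$ from Proposition \ref{theorem-Green} and absorbing the high-frequency piece into $\mathcal{D}(t)$ gives the Lyapunov bound $\frac{d}{dt}\mathcal{E}(t) + c_1\mathcal{E}(t)\le c_2\|u_L(t)\|_{L^2}^2$. Lemma \ref{nonlinear-estimates} with $k=0$, $p=1$, $q=2$ together with (\ref{F1})--(\ref{F2}) and the trivial bound $\|U\|_{L^2}\|\nabla U\|_{L^2}\lesssim \mathcal{E}(t)$ produces $\|u_L(t)\|_{L^2}\lesssim (1+t)^{-3/4} + \int_0^t(1+t-s)^{-3/4}\mathcal{E}(s)\,ds$, and the ansatz $\mathcal{E}(t)\le M(1+t)^{-3/2}$ self-consistently closes by Gronwall.

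For the second stage, the preliminary rate improves the nonlinear bound to $\|(\mathcal{M}_1,\mathcal{M}_2)(t)\|_{L^1}\lesssim (1+t)^{-3/2}$, so Lemma \ref{nonlinear-estimates} with $k=1$ yields $\|\nabla U_L(t)\|_{L^2}\lesssim (1+t)^{-5/4}$. I then repeat the Lyapunov scheme at the derivative level for $\mathcal{E}_1(t)\sim \|\nabla U(t)\|_{H^2}^2$, combining (\ref{basic-energy}) restricted to $k=1,2,3$ with the supplement (\ref{supplement-1}) and the same Fourier-splitting trick, to reach $\frac{d}{dt}\mathcal{E}_1(t) + c\mathcal{E}_1(t)\lesssim \|\nabla u_L(t)\|_{L^2}^2 \lesssim (1+t)^{-5/2}$; Gronwall then yields $\mathcal{E}_1(t)\lesssim (1+t)^{-5/2}$. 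The subtlest step I expect to be the main obstacle is this derivative-level Lyapunov inequality: the supplementary cross terms $\sum_{k=1}^3\langle\Lambda^{k-1}\sigma,\Lambda^k u\rangle$ involve $\|\sigma\|_{L^2}\lesssim\|\tau\|_{L^2}$, which is not controlled by $\|\nabla U\|_{H^2}^2$ on $\mathbb{R}^3$ and produces a low-order $\|\tau\|_{L^2}^2$ residual on the right-hand side. I would handle it by enriching $\mathcal{E}_1(t)$ with a small multiple of $\|\tau\|_{L^2}^2$, whose dissipation is supplied by $\kappa\beta\|\tau\|_{L^2}^2$ after adjoining the $k=0$ equation for $\tau$, and, where needed, by using the faster low-frequency decay $\|\tau_L(t)\|_{L^2}\lesssim (1+t)^{-5/4}$ that follows from the additional $|\xi|$-factor in $\widehat{\tau_L}$ highlighted in the introduction.
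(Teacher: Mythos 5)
Your Stage 2 is essentially the paper's argument (a derivative-level Lyapunov inequality closed against the low-frequency Duhamel bound \eqref{4.3} with $k=1$), but it is made to rest on Stage 1, and Stage 1 has a genuine gap. The Lyapunov inequality $\frac{d}{dt}\mathcal{E}(t)+c_1\mathcal{E}(t)\lesssim\|u_L\|_{L^2}^2$ is fine (it is exactly \eqref{zero-2} in the paper), but the claim that the ansatz $\mathcal{E}(t)\le M(1+t)^{-3/2}$ ``self-consistently closes by Gronwall'' does not survive inspection. If you keep the small factor, $\|(\mathcal{M}_1,\mathcal{M}_2)(s)\|_{L^1}\lesssim\|U\|_{L^2}\|\nabla U\|_{L^2}\lesssim\delta\sqrt{M}\,(1+s)^{-3/4}$, then the Duhamel term is controlled by $\delta\sqrt{M}\int_0^t(1+t-s)^{-3/4}(1+s)^{-3/4}\,ds\sim\delta\sqrt{M}\,(1+t)^{-1/2}$, since both exponents are below $1$; squaring and feeding this into Gronwall gives at best $\mathcal{E}(t)\lesssim(1+t)^{-1}\delta^2 M$, so the weighted sup $M(t)$ picks up a factor $(1+t)^{1/2}$ and the bootstrap never closes at rate $-3/2$ (indeed at no positive rate, because the kernel exponent $3/4<1$ always costs $(1+t)^{1/2}$ after squaring). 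The alternative you implicitly invoke, $\|(\mathcal{M}_1,\mathcal{M}_2)(s)\|_{L^1}\lesssim\mathcal{E}(s)\le M(1+s)^{-3/2}$, does give the right convolution rate, but it is quadratic in $M$ and yields $M\le C_0+CM^2$ with $C_0\sim\mathcal{E}(0)+\|U_0\|_{L^1}^2$; closing this requires smallness of $\|U_0\|_{L^1}$, which is not assumed in Part (i) of Theorem \ref{theorem_decay}. This is precisely why the paper proves the \emph{first-order} decay first: at that level the kernel is $(1+t-s)^{-5/4}$ and the nonlinearity is bounded by $\delta\sqrt{\tilde{\mathcal{H}}(s)}\lesssim\delta(1+s)^{-5/4}\sqrt{\mathcal{M}(t)}$ (using only the uniform bound $\|U\|_{L^2}\lesssim\delta$ from Theorem \ref{theorem}), both exponents exceed $1$, and the closure is linear in $\sqrt{\mathcal{M}(t)}$ with a small prefactor $\delta$, valid for arbitrarily large $\|U_0\|_{L^1}$; the zero-order rate $(1+t)^{-3/4}$ is then deduced afterwards (Lemma \ref{lem-upper_bound2}) using the already-proved derivative decay. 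So your ordering (zero order first, derivatives second) is the step that fails, and reversing it is the missing idea.

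A secondary remark: the low-order residual you anticipate in the derivative-level functional does not arise in the paper's proof, because the supplementary cross terms are taken only for $k=2,3$, i.e.\ $\mathcal{H}(t)=\frac12\bigl(\alpha\|\nabla u\|_{H^2}^2+\kappa\|\nabla\tau\|_{H^2}^2\bigr)+\epsilon_3\sum_{k=2}^3<\Lambda^{k-1}\sigma,\Lambda^k u>$, so no $\|\sigma\|_{L^2}$ term appears; the price is that the dissipation controls only $\|\Lambda^2u\|_{L^2}^2+\|\Lambda^3u\|_{L^2}^2$, and the leftover $\delta\|\nabla u\|_{L^2}^2$ on the right is handled by the Fourier splitting at $|\xi|=R$ as in \eqref{4.32}--\eqref{4.34}. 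Your proposed enrichment with a small multiple of $\|\tau\|_{L^2}^2$ is not needed for this lemma.
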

\begin{proof} It follows from a similar argument as (\ref{energy})  for the case $k=1,2,3$ and (\ref{supplement}) for the case $k=2,3$, one has,
\begin{equation*}
\begin{split}
    & \frac{\rm d}{{\rm d}t}\mathcal{H}(t)+\alpha\epsilon\|\nabla^2 u\|_{H^2}^2+\kappa\mu\|\nabla^2\tau\|_{H^2}^2+\kappa\beta\|\nabla\tau\|_{H^2}^2
    +\epsilon_3\sum_{i=2}^3\left(\frac{\kappa}{2}\|\Lambda^i\sigma\|_{L^2}^2+\frac{\alpha}{4}\|\Lambda^iu\|_{L^2}^2\right)
    \\ \leq &
    C\epsilon_3\left(\kappa^{-1}\epsilon\|\nabla^2 u\|_{H^2}^2+\alpha^{-1}\mu\|\nabla^2\tau\|_{H^2}^2
    +\alpha^{-1}\beta^2\|\nabla\tau\|_{H^2}^2\right)
    \\+ &
    \begin{cases}
    C\delta(\alpha+\kappa+\mu^{-1}\delta+\epsilon_3)\|\nabla (u,\tau)\|_{H^2}^2+\frac14\kappa\mu\|\nabla^2 \tau\|_{H^2}^2, \ {\rm given}\ \mu>0;\\
    C\delta(\alpha+\kappa+\epsilon_3)\|\nabla (u,\tau)\|_{H^2}^2+\frac14\alpha\epsilon\|\nabla^2 u\|_{H^2}^2+C\alpha^{-1}\epsilon^{-1}\kappa^2\delta^2\|\nabla\tau\|_{H^2}^2, \ {\rm given}\ \epsilon>0.
    \end{cases}
\end{split}
\end{equation*}
where
$$
\mathcal{H}(t)=\frac12\left(\alpha\|\nabla u\|_{H^2}^2+\kappa\|\nabla\tau\|_{H^2}^2\right)
  +
  \epsilon_3\sum_{k=2}^3<\Lambda^{k-1}\sigma,\Lambda^k u>= O(\tilde{\mathcal{H}}(t)),
$$ by using the Young's inequality for some small positive $\epsilon_3=\epsilon_3(\alpha,\kappa,\beta)$, and
$\tilde{\mathcal{H}}(t)=\|\nabla(u,\tau)\|_{H^2}^2$.

By further choosing suitable $\epsilon_3$ and $\delta$ which depend only on $\alpha$, $\kappa$, $\beta$ and $\epsilon$ for the case given $\epsilon>0$, and on $\alpha$, $\kappa$, $\beta$ and $\mu$ for the case given $\mu>0$, we have
\begin{equation}\label{E4.21}
\begin{split}
\frac{\rm d}{{\rm d}t} \mathcal{H}(t)+\frac12\left(\frac{\kappa\beta}{2}\|\nabla\tau\|_{H^2}^2+\frac{\epsilon_3\alpha}{4}\sum_{i=2}^3\|\Lambda^iu\|_{L^2}^2\right)
\lesssim
  \delta
   \|\nabla u\|_{L^2}^2.
\end{split}
\end{equation}
Using the Plancherel's theorem, and splitting the integral into two
parts, we have
\begin{equation*}
\|\Lambda^2u\|_{L^2}^2=\|\nabla^2u\|_{L^2}^2
 =
  \int_{|\xi|\leq R}
   |\xi|^4|\hat{u}|^2
    {\rm d}\xi
  +
  \int_{|\xi|\geq R}
   |\xi|^4|\hat{u}|^2
    {\rm d}\xi
 \geq
   C
   \int_{|\xi|\geq R}
    |\xi|^2|\hat{u}|^2
     {\rm d}\xi
\end{equation*}
for some given radius $R>0$.
Similarly, we have
\begin{equation*}
\begin{split}
\|\Lambda^3u\|_{L^2}^2
 \geq
  C\int_{|\xi|\geq R}
    |\xi|^2|\hat{u}|^2
     {\rm d}\xi.
\end{split}
\end{equation*}
Then, from \eqref{E4.21}, we have
\begin{equation}\label{4.32}
\begin{split}
&
\frac{\rm d}{{\rm d}t}
   \mathcal{H}(t)
  +\frac14\left(\frac{\kappa\beta}{2}\|\nabla\tau\|_{H^2}^2+\frac{\epsilon_3\alpha}{4}\sum_{i=2}^3\|\Lambda^iu\|_{L^2}^2\right)
  +\frac{\epsilon_3\alpha}{8}\int_{|\xi|\geq R}
    |\xi|^2|\hat{u}|^2
     {\rm d}\xi
  \\
\lesssim
 &
  \delta\left(
   \int_{|\xi|\leq R}
    |\xi|^2|\hat{u}|^2
     {\rm d}\xi
   +
   \int_{|\xi|\geq R}
    |\xi|^2|\hat{u}|^2
     {\rm d}\xi
 \right).
\end{split}
\end{equation}
Due to (\ref{4.32}) and the smallness of $\delta$, we obtain
\begin{equation*}
\begin{split}
&
\frac{\rm d}{{\rm d}t}
   \mathcal{H}(t)
  +\frac14\left(\frac{\kappa\beta}{2}\|\nabla\tau\|_{H^2}^2+\frac{\epsilon_3\alpha}{4}\sum_{i=2}^3\|\Lambda^iu\|_{L^2}^2\right)
\\&
  +\frac{\epsilon_3\alpha}{16}\int_{|\xi|\geq R}
    |\xi|^2|\hat{u}|^2
     {\rm d}\xi
\lesssim
  \delta
   \int_{|\xi|\leq R}
    |\xi|^2|\hat{u}|^2
     {\rm d}\xi.
\end{split}
\end{equation*} Adding $\frac{\epsilon_3\alpha}{16}\int_{|\xi|\leq R}|\xi|^2|\hat{u}|^2{\rm d}\xi$ on both sides of (\ref{4.32}), one has
\begin{equation}\label{4.34}
\begin{split}
\frac{\rm d}{{\rm d}t}
 \mathcal{H}(t)
  +D\tilde{\mathcal{H}}(t)
\lesssim
   \int_{|\xi|\leq R}
    |\xi|^2|\hat u(t)|^2
     {\rm d}\xi
\end{split}
\end{equation} for some positive constant $D=\min\left\{\frac{\kappa\beta}{4},\frac{\epsilon_3\alpha}{16}\right\}$.
Define
\begin{equation*}
\mathcal M(t)
 =
  \sup\limits_{0\leq \tau\leq t}
   (1+s)^{\frac52}
    \tilde{\mathcal{H}}(s).
\end{equation*}
Notice that $\mathcal M(t)$ is non-decreasing and
\begin{equation*}
\sqrt{\tilde{\mathcal{H}}(s)}
 \lesssim
  (1+s)^{-\frac{5}{4}}
   \sqrt{\mathcal M(t)},\ \ \ \ 0\leq s\leq t.
\end{equation*}
Then, we get from Lemma \ref{nonlinear-estimates} and the definition of $\tilde{\mathcal{H}}(t)$ that
\begin{equation}\label{4.35}
\begin{split}
&
\left(
\int_{|\xi|\leq R}
 |\xi|^2|\hat U(t)|^2
  {\rm d}\xi
\right)^\frac{1}{2}\\
 \lesssim
  &
  (1+t)^{-\frac{5}{4}}\|\hat{U}_0\|_{L^\infty_\xi}
   +
    \delta\int_0^t
   (1+t-\tau)^{-\frac{5}{4}}
    (1+\tau)^{-\frac{5}{4}}
     {\rm d}\tau
      \sqrt{\mathcal M(t)}\\
 \lesssim
  &
   (1+t)^{-\frac{5}{4}}
    \left(
     \|U_0\|_{L^1}
     +
     \delta\sqrt{\mathcal M(t)}
    \right).
\end{split}
\end{equation}
From (\ref{4.34}) and (\ref{4.35}), we have
\begin{equation}\label{4.36}
\begin{split}
\tilde{\mathcal{H}}(t)
 \lesssim
 &
  {\rm e}^{-D t}
   \tilde{\mathcal{H}}(0)
  +
  \int_0^t
   {\rm e}^{-D(t-\tau)}
    \left(
     \int_{|\xi|\leq R}
      |\xi|^2
       |\hat{U}(\tau)|^2
        {\rm d}\xi
    \right)
   {\rm d}\tau\\
 \lesssim
  &
   {\rm e}^{-Dt}
   \tilde{\mathcal{H}}(0)
   +
   \int_0^t
    {\rm e}^{-D(t-\tau)}
     (1+\tau)^{-\frac52}
      \left[
       \|\hat{U}_0\|_{L^\infty_\xi}^2
       +
        \delta^2\mathcal M(t)
      \right]
       {\rm d}\tau\\
 \lesssim
  &
   (1+t)^{-\frac52}
    \left[
     \tilde{\mathcal{H}}(0)
     +
     \|U_0\|_{L^1}^2
     +
       \delta^2\mathcal M(t)
    \right].
\end{split}
\end{equation}
Using the definition of $\mathcal{M}(t)$ and the smallness of $\delta$, we
have
\begin{equation*}
\mathcal M(t)
 \leq
  C\left(\tilde{\mathcal{H}}(0)+
  \|\hat{U}_0\|_{L^\infty_\xi}^2\right),
\end{equation*}
which implies that
\begin{equation}\label{decay1}
\tilde{\mathcal{H}}^{\frac12}(t)
 \lesssim
  (1+t)^{-\frac{5}{4}}
  \left(
  \tilde{\mathcal{H}}^{\frac12}(0)
  +
  \|\hat{U}_0\|_{L^\infty_\xi}
  \right)
   \leq
   C_2(1+t)^{-\frac{5}{4}}.
\end{equation}
The proof of Lemma \ref{lem-upper_bound1} is complete.
\end{proof}
\noindent{\bf Step 2: Zero order and the second order decay}
\begin{lemma}\label{lem-upper_bound2}
	Under the assumptions of Part (i) in Theorem \ref{theorem_decay}, it holds that
\begin{equation}
\|(u,\tau)(t)\|_{L^2}\le C(1+t)^{-\frac{3}{4}},
  \end{equation}
 and
 \begin{equation}
\sum_{2\le k\le3}\|\nabla^k(u,\tau)(t)\|_{L^2}\le C(1+t)^{-\frac{7}{4}},
  \end{equation}
   for any $t\ge0$.
\end{lemma}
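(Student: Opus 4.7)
The lemma contains two estimates, for the $L^2$-norm and for $\|\nabla^k(u,\tau)\|_{L^2}$ with $k=2,3$, and the plan is to establish each by the same low–high frequency energy framework already used for Lemma \ref{lem-upper_bound1}, but now with the \emph{full-range} energy (zero-order part) and with a second-order analogue of the supplementary estimate \eqref{supplement}.

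\textbf{Step 1: $L^2$ decay.} I would start from the total energy inequality (\ref{Total_energy}) together with the supplementary estimate \eqref{supplement-1}, which already provide dissipation for $\|\tau\|_{H^3}^2$ and for $\sum_{i=1}^{3}\|\Lambda^i u\|_{L^2}^2$ but not directly for $\|u\|_{L^2}^2$. The missing piece is supplied by the Fourier splitting
$$\|u\|_{L^2}^2 \le \int_{|\xi|\le R}|\hat u|^2\,d\xi + R^{-2}\|\nabla u\|_{L^2}^2,$$
so that adding a small multiple of $\|u\|_{L^2}^2$ to both sides yields, for a suitable constant $D>0$,
$$\frac{d}{dt}\mathcal{E}(t) + D\,\|(u,\tau)\|_{H^3}^2 \;\lesssim\; \int_{|\xi|\le R}|\hat U|^2\,d\xi.$$
For the right-hand side I would apply Lemma \ref{nonlinear-estimates} with $k=0$, $p=1$, $q=\infty$; using (\ref{F1})–(\ref{F2}), the uniform bound $\|U\|_{L^2}\le \delta$ from Theorem \ref{theorem}, and $\|\nabla U\|_{L^2}\lesssim (1+s)^{-5/4}$ from Lemma \ref{lem-upper_bound1}, one has $\|(\mathcal{M}_1,\mathcal{M}_2)\|_{L^1}\lesssim \delta(1+s)^{-5/4}$, so the standard convolution bound gives
$$\Big(\int_{|\xi|\le R}|\hat U(t)|^2\,d\xi\Big)^{1/2} \;\lesssim\; (1+t)^{-3/4}.$$
Since $\mathcal{E}(t)\sim \|(u,\tau)\|_{H^3}^2$, Gronwall's inequality then yields $\mathcal{E}(t)\lesssim (1+t)^{-3/2}$, which gives the claimed rate $\|(u,\tau)\|_{L^2}\lesssim (1+t)^{-3/4}$.

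\textbf{Step 2: $\nabla^k$ decay for $k=2,3$.} The argument is parallel. Apply $\nabla^k$ ($k=2,3$) to \eqref{system} and repeat the basic energy estimate of Section \ref{sec3}, combined with the supplementary estimate \eqref{supplement} at order $k=3$, to obtain a functional $\mathcal{H}_2(t)=O(\|\nabla^2(u,\tau)\|_{H^1}^2)$ satisfying
$$\frac{d}{dt}\mathcal{H}_2(t) + D_2\,\|\nabla^2(u,\tau)\|_{H^1}^2 \;\lesssim\; \int_{|\xi|\le R}|\xi|^4|\hat u|^2\,d\xi,$$
where the top-order $\|\nabla^2 u\|_{L^2}^2$ term is recovered from $\|\nabla^3 u\|_{L^2}^2$ by the same Fourier-splitting trick. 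For the low-frequency source I would use Lemma \ref{nonlinear-estimates} with $k=2$, $p=1$, $q=\infty$, and, crucially, use the zero-order decay just proved in Step 1 to upgrade the nonlinear estimate to $\|(\mathcal{M}_1,\mathcal{M}_2)\|_{L^1}\lesssim \|U\|_{L^2}\|\nabla U\|_{L^2}\lesssim (1+s)^{-3/4}(1+s)^{-5/4}=(1+s)^{-2}$. The convolution with $(1+t-s)^{-7/4}$ then gives $\big(\int_{|\xi|\le R}|\xi|^4|\hat U|^2\,d\xi\big)^{1/2}\lesssim (1+t)^{-7/4}$, and Gronwall produces $\|\nabla^2(u,\tau)\|_{H^1}^2\lesssim (1+t)^{-7/2}$, hence the desired $(1+t)^{-7/4}$ rate for both $k=2$ and $k=3$.

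\textbf{Main obstacle.} The principal technical point is producing the higher-order energy inequality in Step 2 \emph{uniformly} in Case I and Case II: without diffusion in either $u$ (Case I) or $\tau$ (Case II), the only dissipation comes from the damping $\beta\tau$ in $\eqref{system}_2$ and from the supplementary cross-terms derived via the $(u,\sigma)$-auxiliary system \eqref{u-sigma_linear}. One therefore has to rerun the calculation of Section \ref{sec3} acting on $\nabla^k$ with $k=2,3$, using the commutator inequalities already employed there, and absorb the nonlinear contributions into the dissipation via the smallness of $\delta$ and the factor $\|\nabla u\|_{L^\infty}\lesssim \|\nabla u\|_{H^2}$. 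Once this is in place, the remaining work is routine: applying the convolution bound $\int_0^t(1+t-s)^{-3/4-k/2}(1+s)^{-\alpha}\,ds\lesssim (1+t)^{-3/4-k/2}$ for $\alpha>1$ and invoking Gronwall.
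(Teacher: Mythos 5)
Your proposal is correct and matches the paper's own proof essentially step for step: the paper likewise first closes the zero-order decay via an energy functional $\mathcal{H}_0(t)=O(\|U\|_{H^3}^2)$ with cross terms $\langle\Lambda^{k-1}\sigma,\Lambda^k u\rangle$, Fourier splitting to recover the missing $\|u\|_{L^2}^2$ dissipation, and Lemma \ref{nonlinear-estimates} with $\|(\mathcal{M}_1,\mathcal{M}_2)\|_{L^1}\lesssim(1+s)^{-5/4}$, and then treats $k=2,3$ with the functional $\mathcal{H}_1$ built from the order-$3$ cross term and the upgraded bound $\|(\mathcal{M}_1,\mathcal{M}_2)\|_{L^1}\lesssim(1+s)^{-2}$ obtained from the freshly proved $L^2$ decay. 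No substantive differences to report.
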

\begin{proof}
By similar discussion as (\ref{4.34}), one obtains
\begin{equation}\label{zero-2}
\begin{split}
\frac{\rm d}{{\rm d}t}
 \mathcal{H}_0(t)
  +D_0\|U(t)\|_{H^{3}}^2
\lesssim
   \int_{|\xi|\leq R}
    |\hat u(t)|^2
     {\rm d}\xi,
\end{split}
\end{equation} for some positive constant $D_0=D_0(C_2)$ and
$$
\mathcal{H}_0(t)=\frac12\left(\alpha\|u\|_{H^3}^2+\kappa\|\tau\|_{H^3}^2\right)
  +
  \epsilon_4\sum_{k=1}^3<\Lambda^{k-1}\sigma,\Lambda^k u>
  = O(\|U(t)\|_{H^{3}}^2),
$$ by using the Young's inequality for some small positive $\epsilon_4=\epsilon_4(C_2)$. Then, it follows from Lemma \ref{nonlinear-estimates} and (\ref{decay1}) that
\begin{equation*}
\begin{split}
\|U(t)\|_{H^{3}}^2
 \lesssim
 &
  {\rm e}^{-D_0 t}
   \|U_0\|_{H^{3}}^2
  +
  \int_0^t
   {\rm e}^{-D_0(t-\tau)}
    \left(
     \int_{|\xi|\leq R}
      |\hat{U}(\tau)|^2
        {\rm d}\xi
    \right)
   {\rm d}\tau\\
 \lesssim
  &
   {\rm e}^{-D_0t}
   \|U_0\|_{H^{3}}^2
   +
   \int_0^t
    {\rm e}^{-D_0(t-\tau)}
     (1+\tau)^{-\frac32}
      \|\hat{U}_0\|_{L^\infty_\xi}^2
       {\rm d}\tau
       \\ &
       +\int_0^t {\rm e}^{-D_0(t-\tau)}\left(\int_0^\tau(1+\tau-s)^{-\frac34}(1+s)^{-\frac54}{\rm d}s\right)^2{\rm d}\tau
       \\
 \leq
  &
   C_2(1+t)^{-\frac32}.
\end{split}
\end{equation*}
In conclusion, we have
\begin{eqnarray} \label{U-L2}
    \|U(t)\|_{L^2}\leq C_2(1+t)^{-\frac34}.
\end{eqnarray}

For the second order decay, by a similar discussion as (\ref{4.34}), one obtains
\begin{equation}\label{2ed_decay_1}
\begin{split}
\frac{\rm d}{{\rm d}t}
\mathcal{H}_1(t)
+D_1\|\nabla^2(u,\tau)\|_{H^{1}}^2
\lesssim
\int_{|\xi|\leq R}
|\xi|^4|\hat u(t)|^2
{\rm d}\xi,
\end{split}
\end{equation} for some positive constant $D_1=D_1(C_2)$ and
$$
\mathcal{H}_1(t)=\alpha\|\nabla^2u\|_{H^1}^2+\kappa\|\nabla^2\tau\|_{H^1}^2
+
\epsilon_5<\Lambda^{2}\sigma,\Lambda^3 u>
= O(\|\nabla^2(u,\tau)(t)\|_{H^{1}}^2),
$$ where we have used the Young's inequality for some small positive $\epsilon_5=\epsilon_5(C_2)$. Then, it follows from Lemma \ref{nonlinear-estimates} and (\ref{2ed_decay_1}) that
\begin{equation}\label{4.3632}
\begin{split}
\|\nabla^2(u,\tau)(t)\|_{H^{1}}^2
\lesssim
&
{\rm e}^{-D_1 t}
\|U_0\|_{H^{3}}^2
+
\int_0^t
{\rm e}^{-D_1(t-s)}
\left(
\int_{|\xi|\leq R}
|\xi|^4|\hat{u}(s)|^2
{\rm d}\xi
\right)
{\rm d}s\\
\lesssim
&
{\rm e}^{-D_1t}
\|U_0\|_{H^{3}}^2
+
\int_0^t
{\rm e}^{-D_1(t-s)}
(1+s)^{-\frac72}
\|\hat{U}_0\|_{L^\infty_\xi}^2
{\rm d}s
\\ &
+\int_0^t {\rm e}^{-D_1(t-s)}\int_0^s(1+s-s^\prime)^{-\frac72}(1+s^\prime)^{-\frac82}{\rm d}s^\prime{\rm d}s
\\
\leq
&
C_2(1+t)^{-\frac72}.
\end{split}
\end{equation}
Combining  \eqref{U-L2} with \eqref{4.3632}, we complete the proof of Lemma \ref{lem-upper_bound2}.
\end{proof}
\noindent{\bf Step 3: Further decay estimates}\par
\begin{lemma}\label{lem-upper_bound3}
	Under the assumptions of Part (i) in Theorem \ref{theorem_decay}, it holds that
\begin{equation}\label{tau01}
		 \|\nabla^k \tau\|_{L^2}  \leq C (1+ t)^{-\frac{5}{4} - \frac{k}{2}},\quad k=0,1,
	 \end{equation}  for any $t\ge0$.
\end{lemma}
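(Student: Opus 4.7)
The plan is to refine the $L^2$ decay rate of $\nabla^k\tau$ beyond the $(1+t)^{-3/4-k/2}$ rate of $\nabla^k u$ by exploiting the extra factor of $|\xi|$ (or an exponentially decaying remainder) that appears in the $\tau$-row of the Green matrix $\hat{\mathbb{G}}_{u,\tau}$ in the low-frequency regime. The approach combines the Duhamel representation \eqref{duhamel} with a frequency splitting at $|\xi|=R$ (where $R$ is the radius supplied by Proposition \ref{theorem-Green}) and the high-derivative decay already established in Lemmas \ref{lem-upper_bound1} and \ref{lem-upper_bound2}.

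First, I would write
\[
\|\nabla^k\tau(t)\|_{L^2}^2=\int_{|\xi|\le R}|\xi|^{2k}|\hat{\tau}(\xi,t)|^2\,d\xi+\int_{|\xi|>R}|\xi|^{2k}|\hat{\tau}(\xi,t)|^2\,d\xi.
\]
The high-frequency piece is dominated by $R^{-2}\|\nabla^{k+1}\tau(t)\|_{L^2}^2$, which for $k=0$ is controlled by $(1+t)^{-5/2}$ via Lemma \ref{lem-upper_bound1}, and for $k=1$ by $(1+t)^{-7/2}$ via Lemma \ref{lem-upper_bound2}; in both cases this matches the target.

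For the low-frequency part, the key ingredient is a pointwise bound on the $\tau$-row of $\hat{\mathbb{G}}_{u,\tau}$. An inspection of the explicit formulas in Corollary \ref{coro}, together with Proposition \ref{theorem-Green} and the damping structure of $\mathcal{G}_2$ near $|\xi|=0$ emphasized in the introduction, should give
\[
\bigl|\bigl[\hat{\mathbb{G}}_{u,\tau}(\xi,t)\hat{V}\bigr]_{\tau}\bigr|\mathbbm{1}_{\{|\xi|\le R\}}\lesssim \bigl(e^{-\beta t}+|\xi|\,e^{-\theta|\xi|^2 t}\bigr)|\hat{V}(\xi)|
\]
for any source $\hat{V}$. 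Applying this to the initial data, and via the Duhamel formula to the source $(\widehat{\mathcal{M}_1},\widehat{\mathcal{M}_2})(s)$ from Lemma \ref{nonlinear-estimates}, together with $\|\widehat{\mathcal{M}_i}\|_{L^\infty}\le \|\mathcal{M}_i\|_{L^1}$, one obtains
\[
\Bigl(\int_{|\xi|\le R}|\xi|^{2k}|\hat{\tau}|^2\,d\xi\Bigr)^{1/2}\lesssim (1+t)^{-\frac{5}{4}-\frac{k}{2}}\|U_0\|_{L^1}+\int_0^t \bigl(e^{-\beta(t-s)}+(1+t-s)^{-\frac{5}{4}-\frac{k}{2}}\bigr)\bigl(\|\mathcal{M}_1\|_{L^1}+\|\mathcal{M}_2\|_{L^1}\bigr)(s)\,ds.
\]
By \eqref{F1}--\eqref{F2} and the decay rates from Lemmas \ref{lem-upper_bound1} and \ref{lem-upper_bound2}, $\|\mathcal{M}_i(s)\|_{L^1}\lesssim (1+s)^{-2}$; the remaining convolution integral is then $O((1+t)^{-5/4-k/2})$ by splitting the time integral at $s=t/2$.

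The main obstacle I anticipate lies in establishing the pointwise Green-matrix bound above. In Corollary \ref{coro} the coefficient $e^{-(\beta+\mu|\xi|^2)t}-\mathcal{G}_2(\xi,t)-\epsilon|\xi|^2\mathcal{G}_1(\xi,t)$ is divided by $|\xi|^2$, and one must verify it vanishes to order $|\xi|^2$ as $|\xi|\to 0$. This requires a careful Taylor expansion of $\lambda_\pm$ around $|\xi|=0$ exploiting $\lambda_-\to -\beta$ and $\lambda_+\to 0$; this is precisely the ``key observation on $\mathcal{G}_2$'' emphasized around \eqref{hat-tau} in the introduction. Once that cancellation is secured, the remainder of the argument follows the Duhamel-plus-frequency-splitting blueprint already used for Lemmas \ref{lem-upper_bound1} and \ref{lem-upper_bound2}.
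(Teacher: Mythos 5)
Your argument is correct, but it takes a genuinely different route from the paper. The paper's proof of this lemma is a short energy argument in physical space: apply $\nabla^k$ ($k=0,1$) to the $\tau$-equation, test with $\nabla^k\tau$, and use the damping term $\beta\tau$; the right-hand side is dominated by $\alpha\|\nabla^{k+1}u\|_{L^2}^2+\|\nabla^kQ(\nabla u,\tau)\|_{L^2}^2+\|\nabla^k(u\cdot\nabla\tau)\|_{L^2}^2\lesssim(1+t)^{-(\frac52+k)}$ thanks to Lemmas \ref{lem-upper_bound1} and \ref{lem-upper_bound2}, and Gronwall with the factor $e^{-\beta(t-s)}$ then yields \eqref{tau01} in two lines — no Fourier analysis of the Green matrix is needed at this stage. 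You instead work through Duhamel with the $\tau$-row of $\hat{\mathbb{G}}_{u,\tau}$, frequency splitting at $|\xi|=R$, and the refined low-frequency bound $|\mathcal{G}_2(\xi,t)|\lesssim|\xi|^2e^{-\theta|\xi|^2t}+e^{-\beta t/2}$; this is exactly the mechanism the paper isolates in Lemma \ref{lemma_lower} (equation \eqref{lemma_lower_eq2}) and uses only for the \emph{lower} bounds, so your route is viable and in fact makes the linear-level explanation \eqref{hat-tau} rigorous. Your numerology checks out: the high-frequency piece is absorbed by $\|\nabla^{k+1}\tau\|_{L^2}$ from Lemmas \ref{lem-upper_bound1}--\ref{lem-upper_bound2}, $\|(\mathcal{M}_1,\mathcal{M}_2)(s)\|_{L^1}\lesssim(1+s)^{-2}$ follows from \eqref{F1}--\eqref{F2} and the rates already proved, and the convolution integrals close at the rate $(1+t)^{-\frac54-\frac k2}$ for $k=0,1$. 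One clarification on the step you flag as the main obstacle: in Corollary \ref{coro} the coefficient $e^{-(\beta+\mu|\xi|^2)t}-\mathcal{G}_2-\epsilon|\xi|^2\mathcal{G}_1$ is divided by $|\xi|^2$ but is also multiplied by $\xi_k$ (or $\xi_j$) and $\xi_l$, which already supply a factor $|\xi|^2$; hence you do not need this bracket to vanish to second order in $|\xi|$ — boundedness together with the refined $\mathcal{G}_2$ estimate (plus $e^{-(\beta+\mu|\xi|^2)t}\le e^{-\beta t}$ and $\epsilon|\xi|^2|\mathcal{G}_1|\lesssim|\xi|^2e^{-\theta|\xi|^2t}$) is enough, so the cancellation you worry about is immediate. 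In terms of trade-offs, the paper's proof is shorter and more elementary, exploiting the damping directly; your proof front-loads the Green-matrix refinement, which the paper must prove anyway for Part (ii), and has the advantage of displaying where the extra $\frac12$ of decay for $\tau$ comes from at the linear level.
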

\begin{proof}
 Applying $\nabla^k$ ($k=0,1$) to (\ref{system})$_2$, multiplying the result by $\nabla^k \tau$, and integrating with respect to $x$, we have from Lemmas \ref{lem-upper_bound1} and \ref{lem-upper_bound2}  that
	 \begin{equation}\label{eq_tau}
			\begin{split}
				 &\frac{1}{2}\frac{\mathrm{d}}{\mathrm{d}t} \|\nabla^k \tau\|_{L^2}^2 + \mu \|\nabla^{k+1} \tau \|_{L^2}^2 + \frac{\beta}{2}\|\nabla^k \tau\|_{L^2}^2\\
				 \lesssim & \alpha \|\nabla^{k+1}u\|_{L^2}^2 + \|\nabla^k{Q}(\nabla u,\tau)\|_{L^2}^2 + \|\nabla^{k}(u\cdot \nabla\tau)\|_{L^2}^2\\
				 \lesssim & \alpha \|\nabla^{k+1}u\|_{L^2}^2 + \|\nabla^{k+1}u\|_{L^2}^2\|\tau\|_{L^\infty}^2 + \|\nabla u\|_{L^2}^2\|\nabla^{k}\tau\|_{L^\infty}^2\\&+ \|\nabla^{k+1}\tau\|_{L^2}^2\|u\|_{L^\infty}^2 + \|\nabla \tau\|_{L^2}^2\|\nabla^{k}u\|_{L^\infty}^2\\
				 \lesssim & (1+ t)^{-(\frac{5}{2} + k)},
			\end{split}
	 \end{equation}
	 which implies that
	 \begin{equation*}
		 \frac{\mathrm{d}}{\mathrm{d}t} \|\nabla^k \tau\|_{L^2}^2 +  \beta\|\nabla^k \tau\|_{L^2}^2 \lesssim (1+ t)^{-(\frac{5}{2} + k)},\text{ for } k=0,1.
	 \end{equation*}
	 Using Gronwall inequality, we obtain
	 \begin{equation}\label{eq_tau_2}
		 \|\nabla^k \tau\|_{L^2}^2 \leq e^{-\beta t}\|\nabla^k \tau_0\|_{L^2}^2 + \int_{0}^{t} e^{-\beta(t-s)}(1+s)^{-(\frac{5}{2} + k)}\mathrm{d} s \leq C (1+ t)^{-(\frac{5}{2} + k)},
	 \end{equation}
for $k=0,1. $	This completes the proof of Lemma \ref{lem-upper_bound3}.
	\end{proof}
Now the proof of Part (i) in Theorem \ref{theorem_decay} is accomplished by Lemmas \ref{lem-upper_bound1}-\ref{lem-upper_bound3}.

\subsection{Lower time-decay estimates}
To prove Part (ii) in Theorem \ref{theorem_decay}, we begin this subsection with giving some refined estimates for the Fourier transform of the Green function.

\begin{lemma}\label{lemma_lower}
Let  $R$ and $\theta$ be the constants chosen in Proposition \ref{theorem-Green}. There exist three positive constants $\eta=\eta(\alpha,\beta,\kappa)$,  $c_1=c_1(\alpha,\kappa,\beta)$ and $\tilde{c}_1=c_1(\alpha,\kappa,\beta)$, and a positive time $t_1=t_1(\alpha,\kappa,\beta)$, such that
	\begin{equation}\label{lemma_lower_eq1}
	|\mathcal{G}_1(\xi,t)| \geq c_1 e^{-\eta |\xi|^2 t},~~|\mathcal{G}_3(\xi,t)| \geq c_1 e^{-\eta |\xi|^2 t}, \ {\rm for}\ {\rm all}\ |\xi| \leq R \ {\rm and}\ t\geq t_1.
	\end{equation}
	Moreover, $\mathcal{G}_2$ admits the following refined estimate
	\begin{equation}\label{lemma_lower_eq2}
	|\mathcal{G}_2(\xi,t)|
	\leq \tilde{c}_1\left(|\xi|^2 e^{-\theta|\xi|^2t} + e^{-\frac{\beta t}{2}}\right), \ {\rm for}\ {\rm all}\ |\xi| \leq R \ {\rm and}\ t\geq t_1.
	\end{equation}
\end{lemma}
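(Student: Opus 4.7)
The plan is to exploit the algebraic identities
\begin{equation*}
\mathcal{G}_2(\xi,t)=\lambda_+\mathcal{G}_1(\xi,t)+e^{\lambda_- t},\qquad \mathcal{G}_3(\xi,t)=-\lambda_+\mathcal{G}_1(\xi,t)+e^{\lambda_+ t},
\end{equation*}
already used in the proof of Proposition \ref{theorem-Green}, together with sharp two-sided bounds on $\lambda_\pm$ on the regime $|\xi|\le R$. The first step is to pin down the range of $\lambda_+$. Starting from the expression \eqref{expression-lamda+}, one uses $\sqrt{\mathfrak{D}(|\xi|)}\ge \beta/\sqrt{2}$ (from the second half of \eqref{Delta}) to bound the denominator in \eqref{expression-lamda+} below by $\beta$, while the numerator is controlled by $2|\xi|^2\bigl(1+\beta+\alpha\kappa/2\bigr)$ since $\epsilon,\mu,|\xi|\le 1$. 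Combined with the upper bound $\lambda_+\le -\theta|\xi|^2$ already proven, this yields
\begin{equation*}
-\eta|\xi|^2\le \lambda_+\le -\theta|\xi|^2,\qquad \eta:=\frac{2(1+\beta+\alpha\kappa/2)}{\beta}.
\end{equation*}
Next, the Vieta relation $\lambda_++\lambda_-=-[(\mu+\epsilon)|\xi|^2+\beta]$ together with $\sqrt{\mathfrak{D}}\ge \beta/\sqrt{2}$ forces $\lambda_-\le -\beta/2$, hence $e^{\lambda_- t}\le e^{-\beta t/2}$; moreover $\beta/\sqrt{2}\le\lambda_+-\lambda_-=\sqrt{\mathfrak{D}(|\xi|)}\le 2+\beta$ on $|\xi|\le R$.

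For the lower bound on $\mathcal{G}_1$, the idea is that the slowly decaying term $e^{\lambda_+ t}\ge e^{-\eta|\xi|^2 t}$ eventually dominates the exponentially small $e^{\lambda_- t}\le e^{-\beta t/2}$ for large $t$. The key quantitative check is that with the explicit $R$ chosen in Proposition \ref{theorem-Green}, one has $\eta R^2\le \beta/4$, so that $\beta/2-\eta|\xi|^2\ge \beta/4$ uniformly on $|\xi|\le R$. Hence taking $t_1:=(4\ln 2)/\beta$ yields $e^{-\beta t/2}\le \tfrac12 e^{-\eta|\xi|^2 t}$ for $t\ge t_1$, and consequently
\begin{equation*}
|\mathcal{G}_1(\xi,t)|=\frac{e^{\lambda_+ t}-e^{\lambda_- t}}{\lambda_+-\lambda_-}\ge \frac{\tfrac12 e^{-\eta|\xi|^2 t}}{2+\beta}=:c_1 e^{-\eta|\xi|^2 t}.
\end{equation*}
The lower bound on $\mathcal{G}_3$ then drops out of the decomposition above: since $\lambda_+\le 0$ and $\mathcal{G}_1>0$, the term $-\lambda_+\mathcal{G}_1$ is nonnegative, so $\mathcal{G}_3\ge e^{\lambda_+ t}\ge e^{-\eta|\xi|^2 t}$, giving the claim with constant $1$ (and hence with $c_1$ after relabeling).

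For the refined upper estimate \eqref{lemma_lower_eq2} on $\mathcal{G}_2$, I would simply feed the bounds above into the identity $\mathcal{G}_2=\lambda_+\mathcal{G}_1+e^{\lambda_- t}$: using $|\lambda_+|\le \eta|\xi|^2$, the already-established pointwise bound $|\mathcal{G}_1|\le K e^{-\theta|\xi|^2 t}$ from Proposition \ref{theorem-Green}, and $|e^{\lambda_- t}|\le e^{-\beta t/2}$, one obtains
\begin{equation*}
|\mathcal{G}_2(\xi,t)|\le \eta K|\xi|^2 e^{-\theta|\xi|^2 t}+e^{-\beta t/2}\le \tilde c_1\bigl(|\xi|^2 e^{-\theta|\xi|^2 t}+e^{-\beta t/2}\bigr),
\end{equation*}
with $\tilde c_1:=\max\{\eta K,1\}$.

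The only non-routine point is the quantitative verification $\eta R^2\le \beta/4$, which is what allows $e^{\lambda_+ t}$ to dominate $e^{\lambda_- t}$ uniformly in $|\xi|\le R$ for $t\ge t_1$. This is a direct numerical check from the explicit formula $R=\min\{1,\beta/(2\sqrt{2+2\beta+\kappa\alpha})\}$ chosen in Proposition \ref{theorem-Green}: plugging in gives $\eta R^2\le \beta/4$, so no re-tuning of $R$ is needed. All other constants ($\eta$, $c_1$, $\tilde c_1$, $t_1$) depend only on $\alpha,\kappa,\beta$, consistent with the statement.
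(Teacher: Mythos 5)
Your proof is correct and follows essentially the same approach as the paper: two-sided eigenvalue bounds $-\eta|\xi|^2\le\lambda_+\le-\theta|\xi|^2$, $\lambda_-\le-\beta/2$, $\tfrac{\beta}{\sqrt2}\le\lambda_+-\lambda_-\le 2R^2+\beta$, and for $\mathcal{G}_2$ the same splitting into a $|\xi|^2$-weighted heat-kernel part (from $|\lambda_+|\lesssim|\xi|^2$) plus an $e^{-\beta t/2}$ part. The only variation is in the lower bounds: you make $e^{\lambda_- t}$ dominated by $\tfrac12 e^{-\eta|\xi|^2t}$ through the check $\eta R^2\le\beta/4$ (which does hold for the explicit $R$ of Proposition \ref{theorem-Green}), whereas the paper factors $e^{\lambda_+t}\bigl(1-e^{-(\lambda_+-\lambda_-)t}\bigr)$ and uses $\lambda_+-\lambda_-\ge\beta/\sqrt2$, so it needs no such compatibility between $\eta$ and $R$; both verifications are sound.
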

\begin{proof}	
Noting first that $\lambda_+ - \lambda_-=\sqrt{\mathfrak{D}(|\xi|)}$, based on the analysis in the proof of Proposition \ref{theorem-Green},  for all $|\xi|\leq R$, there holds
	\begin{equation} \label{lower-0}
	\frac{\sqrt{2}}{2}\beta\le\lambda_+ - \lambda_-
	\leq  2R^2 + \beta.
	\end{equation}
 Therefore, it follows from (\ref{expression-lamda+}) that
	\begin{equation*}
	\lambda_+\geq -\frac{2}{\beta}|\xi|^2\left[\epsilon\left(\mu|\xi|^2+\beta\right)+\frac{\alpha\kappa}{2}\right]\geq -\frac{2}{\beta}\left(R^2+\beta+\frac{\alpha\kappa}{2}\right)|\xi|^2 =: -\eta |\xi|^2,
	\end{equation*}
where $\eta=\frac{2}{\beta}\left(R^2+\beta+\frac{\alpha\kappa}{2}\right)$. This, together with (\ref{expression-lamda+}), implies that, for any $t\geq t_1\doteq\frac{\ln 2}{2R^2+\beta}$,
	\begin{equation} \label{lower-2}
	|e^{\lambda_+t}-e^{\lambda_-t}| = \left|e^{\lambda_+ t}\big(1 - e^ {-(\lambda_+-\lambda_-)t}\big)\right|   \geq  \frac{1}{2}e^{-\eta |\xi|^2 t},
	\end{equation}
	and
	\begin{equation} \label{lower-3}
	|\lambda_+e^{\lambda_-t}-\lambda_-e^{\lambda_+t}| = |e^{\lambda_+ t}\big(\lambda_+ e^ {-(\lambda_+-\lambda_-)t} - \lambda_-\big)| \geq |\lambda_+-\lambda_-|e^{-\eta |\xi|^2t}.
	\end{equation}
Accordingly, thanks to \eqref{G}, (\ref{lower-0}), (\ref{lower-2}) and (\ref{lower-3}), choosing
   \begin{equation} \label{lower3}
	{c}_1=\min\left\{\frac{1}{2(2R^2 + \beta)},1\right\},
  \end{equation}
we finish the proof of \eqref{lemma_lower_eq1}.

	Next, we infer from (\ref{eigen-relation})$_1$ and (\ref{expression-lamda+}) that, for $|\xi|\leq R$,
	\begin{equation} \label{lower-4}
	\begin{split}
			\lambda_- &\leq -\frac{\beta}{2},~~ |\lambda_-|\leq (\mu + \epsilon)|\xi|^2/2+\beta\leq R^2+\beta,\\
			\lambda_+ &\leq -\frac{\alpha\kappa}{2R^2+2\beta}|\xi|^2,~~ |\lambda_+|\leq \frac{2|\xi|^2(R^2+\beta+\alpha\kappa)}{\beta}.
	\end{split}
	\end{equation}
	Then it follows from (\ref{lower-0}) and (\ref{lower-4}) that
	\begin{equation*}
    \begin{split}
	|\mathcal{G}_2|	
    \leq
    \left|\frac{\lambda_+}{\lambda_+-\lambda_-}\right|e^{\lambda_+t}+\left|\frac{\lambda_-}{\lambda_+-\lambda_-}\right|e^{\lambda_-t}
    \leq
    \tilde{c}_1\left(|\xi|^2 e^{-\theta|\xi|^2t}+e^{-\frac{\beta t}{2}}\right),
	\end{split}
    \end{equation*} where
    \begin{equation*}
       \tilde{c}_1=\max\left\{\frac{2\sqrt{2}(2R^2+\beta+\alpha\kappa)}{\beta^2},\frac{\sqrt{2}(R^2+\beta)}{\beta}\right\}.
    \end{equation*}
	The proof of (\ref{lemma_lower_eq2}) is complete.
\end{proof}
Then, we have the lower decay bounds for the linear system. In the following, denote by $$\mathbb{G}_{u,\sigma}(t) :=(\mathbb{G}_{u,\sigma}^1(t),\mathbb{G}_{u,\sigma}^2(t))^\top.$$

\begin{lemma}\label{linear_lower_bound}
	Under the assumptions of Part (ii) in Theorem \ref{theorem_decay}, there exist a positive time $t_2$ and a positive generic constant $c_2$, such that
	\begin{equation}\label{linear_lower_bound_1}
	\|\nabla^k\left(\mathbb{G}_{u,\sigma}^1(t)\ast U_0^\sigma\right)\|_{L^2} \geq c_2(1 + t)^{-\frac{3}{4}-\frac{k}{2}},
	\end{equation}
	\begin{equation}\label{linear_lower_bound_2}
	\|\nabla^k\left(\mathbb{G}_{u,\sigma}^2(t)\ast U_0^\sigma\right)\|_{L^2} \geq c_2(1 + t)^{-\frac{5}{4}-\frac{k}{2}},
	\end{equation}
for all $t\geq t_2$, and $k=0, 1, 2, \cdot\cdot\cdot$, where $t_2=t_2(\alpha,\kappa,\beta,\|(\hat{u}_0,\hat{\tau}_0)\|_{L^\infty_\xi})$, $c_2=c_2(\alpha,\kappa,\beta,c_0,\|(\hat{u}_0,\hat{\tau}_0)\|_{L^\infty_\xi})$, $U^\sigma=(u,\sigma)^\top$ and $U^\sigma_0=(u_0,\sigma_0)^\top$.
\end{lemma}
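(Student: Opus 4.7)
The plan is to work entirely in Fourier space via Plancherel, isolate the dominant term in each row of $\hat{\mathbb{G}}_{u,\sigma}$, and localize to a small frequency ball where both the hypothesis $|\hat u_0|\geq c_0$ and the pointwise lower bounds on $\mathcal{G}_1,\mathcal{G}_3$ from Lemma~\ref{lemma_lower} apply. First I would use the explicit formula \eqref{Green-matrix} to write
\begin{equation*}
\widehat{\mathbb{G}_{u,\sigma}^1(t)\ast U_0^\sigma}=[\mathcal{G}_3-\epsilon|\xi|^2\mathcal{G}_1]\hat u_0+\kappa|\xi|\mathcal{G}_1\hat\sigma_0,
\end{equation*}
\begin{equation*}
\widehat{\mathbb{G}_{u,\sigma}^2(t)\ast U_0^\sigma}=-\tfrac{\alpha}{2}|\xi|\mathcal{G}_1\hat u_0+[\mathcal{G}_2+\epsilon|\xi|^2\mathcal{G}_1]\hat\sigma_0.
\end{equation*}
Since $\hat\sigma_0$ is a bounded multiplier of $\hat\tau_0$, and $\|\hat u_0\|_{L^\infty},\|\hat\tau_0\|_{L^\infty}\leq\|(u_0,\tau_0)\|_{L^1}$, the Fourier transforms of the initial data are uniformly bounded by $M=M(\|(u_0,\tau_0)\|_{L^1})$.

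For \eqref{linear_lower_bound_1}, the reverse triangle inequality together with Proposition~\ref{theorem-Green} and \eqref{lemma_lower_eq1} gives, for $|\xi|\leq R$ and $t\geq t_1$,
\begin{equation*}
\big|\widehat{\mathbb{G}_{u,\sigma}^1(t)\ast U_0^\sigma}\big|\geq c_0 c_1 e^{-\eta|\xi|^2 t}-KM(\epsilon|\xi|^2+\kappa|\xi|)e^{-\theta|\xi|^2 t}.
\end{equation*}
On the ball $|\xi|\leq t^{-1/2}$ one has $e^{-\eta|\xi|^2 t}\geq e^{-\eta}$, while the error is of order $t^{-1/2}\to 0$. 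Thus there exists $t_2\geq t_1$, depending only on $\alpha,\kappa,\beta$ and $\|(u_0,\tau_0)\|_{L^1}$, such that the principal term dominates by a factor of two for $t\geq t_2$, and Plancherel together with the rescaling $\xi=\zeta/\sqrt t$ yields
\begin{equation*}
\|\nabla^k(\mathbb{G}_{u,\sigma}^1(t)\ast U_0^\sigma)\|_{L^2}^2\geq\tfrac{c_0^2 c_1^2}{4}\,t^{-k-3/2}\int_{|\zeta|\leq 1}|\zeta|^{2k}e^{-2\eta|\zeta|^2}d\zeta\gtrsim(1+t)^{-3/2-k}.
\end{equation*}

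For \eqref{linear_lower_bound_2}, the principal term $-\tfrac{\alpha}{2}|\xi|\mathcal{G}_1\hat u_0$ carries an extra factor $|\xi|$, which is the source of the faster rate $(1+t)^{-5/4-k/2}$. The main obstacle, and precisely the reason the refined estimate \eqref{lemma_lower_eq2} was singled out, is that the naive bound $|\mathcal{G}_2|\leq Ke^{-\theta|\xi|^2 t}$ is of size $O(1)$ at low frequency and would swamp the $O(|\xi|)$ principal term. Using \eqref{lemma_lower_eq2} instead, the $\hat\sigma_0$-contribution is controlled by $|\xi|^2 e^{-\theta|\xi|^2 t}+e^{-\beta t/2}$, and the reverse triangle inequality gives, for $|\xi|\leq R$ and $t\geq t_1$,
\begin{equation*}
\big|\widehat{\mathbb{G}_{u,\sigma}^2(t)\ast U_0^\sigma}\big|\geq\tfrac{\alpha c_0 c_1}{2}|\xi|e^{-\eta|\xi|^2 t}-CM\big(|\xi|^2 e^{-\theta|\xi|^2 t}+e^{-\beta t/2}\big).
\end{equation*}
Restricting to the spherical shell $\tfrac{1}{2}t^{-1/2}\leq|\xi|\leq t^{-1/2}$, the principal term is bounded below by $c t^{-1/2}$, while the errors are $O(t^{-1})$ and exponentially small. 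Enlarging $t_2$ if necessary ensures that the errors are at most half the principal term on this shell, and integrating $|\xi|^{2k+2}e^{-2\eta|\xi|^2 t}$ over the shell (volume $\sim t^{-3/2}$) yields the lower bound of order $t^{-k-5/2}$, which is \eqref{linear_lower_bound_2}.

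The only genuinely delicate point is thus the second component, where the $\mathcal{G}_2\hat\sigma_0$ contribution must be controlled via the refined estimate \eqref{lemma_lower_eq2} rather than the crude exponential bound from Proposition~\ref{theorem-Green}; everything else reduces to standard low-frequency heat-kernel computations.
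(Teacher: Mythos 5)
Your proof is correct and follows essentially the same route as the paper: Plancherel's theorem, the same main-term/error split of each row of $\hat{\mathbb{G}}_{u,\sigma}$, the lower bounds \eqref{lemma_lower_eq1} on $\mathcal{G}_1,\mathcal{G}_3$ together with $|\hat u_0|\geq c_0$, and crucially the refined bound \eqref{lemma_lower_eq2} on $\mathcal{G}_2$ to tame the $\hat\sigma_0$-contribution in the second component. The only minor difference is that you absorb the error pointwise on a shrinking ball/shell of radius $\sim t^{-1/2}$, whereas the paper applies the triangle inequality at the $L^2$ level over the whole ball $|\xi|\leq R$ and uses that the error integrals ($I_2$, $I_4$) decay faster by a factor $(1+t)^{-1/2}$; both yield the stated rates for $t\geq t_2$.
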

\begin{proof}
	First of all, by Plancherel's theorem, we have
	\begin{equation}\label{e4.41}
		\begin{split}
			&\|\nabla^k\left(\mathbb{G}_{u,\sigma}^1(t)\ast U_0^\sigma\right)\|_{L^2}
            \\ =&
            \left\|(|\xi|^k\mathcal{G}_3(\cdot,t)-\epsilon|\cdot|^{2+k}\mathcal{G}_1(\cdot,t))\hat{u}_0(\cdot) +  	\kappa|\cdot|^{1+k}\mathcal{G}_1(\cdot,t)\hat{\sigma}_0(\cdot)\right\|_{L^2}\\
			\geq & \left(\int_{|\xi|\leq R}|\xi|^{2k}\big|\mathcal{G}_3(\xi,t)\hat{u}_0(\xi) \big|^2 d\xi\right)^\frac12\\
            &-\left(\int_{|\xi|\leq R}\left(\epsilon|\xi|^{2+k}|\mathcal{G}_1(\xi,t)||\hat{u}_0(\xi)|+\kappa|\xi|^{1+k}|\mathcal{G}_1(\xi,t)||\hat{\sigma}_0(\xi)|\right)^2d\xi\right)^\frac12
            \\ =&
            I_1-I_2.
		\end{split}
	\end{equation}
Using the Lemma \ref{lemma_lower}, we have, for $t\geq t_1$,	
	\begin{equation}\label{I1}
	\begin{split}
		I_1 \geq &c_1\inf_{|\xi|\leq R}|\hat{u}_0(\xi)|\left(\int_{|\xi|\leq R}|\xi|^{2k}e^{-2\eta |\xi|^2t}d \xi\right)^\frac12\\
		 \geq& c_1c_0 \left(\int_{|\zeta|\leq \sqrt{t_1}R}t ^{-\frac32-k}e^{-2\eta |\zeta|^2}d \zeta  \right)^\frac12\gtrsim(1+t)^{-\frac{3}{4}-\frac{k}{2}}.
	\end{split}
	\end{equation}
On the other hand, applying Proposition \ref{theorem-Green} yields
	\begin{equation*}
    \begin{split}
		I_2 \leq &\|(\hat{u}_0,\hat{\sigma}_0)\|_{L^\infty_\xi}\left( \int_{|\xi|\leq R} \left( \epsilon |\xi|^{4+2k} +  \kappa^2|\xi|^{2+2k}\right)|\mathcal{G}_1(\xi,t)|^2 d \xi\right)^\frac12
        \\ \lesssim &
        \|(\hat{u}_0,\hat{\tau}_0)\|_{L^\infty_\xi} \left(\int_{|\xi|\leq R} |\xi|^{2+2k}e^{-2\theta|\xi|^2t}d\xi\right)^\frac12
        \\ \lesssim &
        (1+t)^{-\frac{5}{4}-\frac{k}{2}}.
	\end{split}
    \end{equation*}
Next, using Plancherel's theorem again, we are led to
	\begin{equation*}
	\begin{split}
	&\|\nabla^k\left(\mathbb{G}_{u,\sigma}^2(t)\ast U_0^\sigma\right)\|_{L^2}
\\=  & \left\|-\frac{\alpha}{2}|\cdot|^{1+k}\mathcal{G}_1(\cdot,t)\hat{u}_0(\cdot) +  [|\cdot|^k \mathcal{G}_2(\cdot,t)+\epsilon|\cdot|^{2+k}\mathcal{G}_1(\cdot,t)]\hat{\sigma}_0(\cdot)\right\|_{L^2}\\
\ge&\frac{\alpha}{2}\left(\int_{|\xi|\leq R}|\xi|^{2+2k}|\mathcal{G}_1(\xi,t)|^2|\hat{u}_0(\xi)|^2 d \xi\right)^\frac12\\
&  - \left(\int_{|\xi|\leq R}\left(|\xi|^k|\mathcal{G}_2(\xi,t)|+ \epsilon|\xi|^{2+k}|\mathcal{G}_1(\xi,t)|\right)^2|\hat{\sigma}_0(\xi)|^2 d \xi\right)^\frac12\\
	= & I_3 - I_4.
	\end{split}
	\end{equation*}
	Similiar to the analysis of $I_1$ and $I_2$, we have
	\begin{equation*}
	I_3 \gtrsim (1 + t)^{-\frac{5}{4}-\frac{k}{2}},
	\end{equation*}
	and
	\begin{equation}\label{I4}
	\begin{split}
	I_4 \lesssim& \|(\hat{u}_0, \hat{\sigma}_0)\|_{L^\infty_\xi} \left(\int_{|\xi|\leq R} \left(|\xi|^{4+2k}|\mathcal{G}_1(\xi,t)|^2+|\xi|^{2k}|\mathcal{G}_2(\xi,t)|^2\right) d \xi\right)^\frac12
	\\ \lesssim &
    \|(\hat{u}_0,\hat{\tau}_0)\|_{L^\infty_\xi}\left(\int_{|\xi|\leq R}  |\xi|^{4+2k}e^{-2\theta|\xi|^2 t}+e^{-\beta t} d \xi \right)^\frac12\lesssim (1 + t)^{-\frac{7}{4}-\frac{k}{2}}.
	\end{split}
	\end{equation}
It follows from \eqref{e4.41}-\eqref{I4} that \eqref{linear_lower_bound_1} and \eqref{linear_lower_bound_2} hold. The proof of Lemma \ref{linear_lower_bound} is complete.
\end{proof}
Based on the above analysis, we can get the lower decay bounds for the nonlinear system \eqref{system} in the following lemma.
\begin{lemma}\label{nonlinear_lower_bound}
	Under the assumptions of Part (ii) in Theorem \ref{theorem_decay}, there exist a positive time $t_0$ and a positive generic constant $c$, such that
	\begin{equation}\label{nonlinear_lower_bound_1}
\|\nabla^k u(t)\|_{L^2} \geq c(1 + t)^{-\frac{3}{4}-\frac{k}{2}}, \quad k=0,1,2,
	\end{equation}
	and
	\begin{equation}\label{nonlinear_lower_bound_2}
 \|\nabla^k \tau (t)\|_{L^2} \geq c(1 + t)^{-\frac{5}{4}-\frac{k}{2}}, \quad k=0, 1,
	\end{equation}
for all $t\geq t_0$ where $t_0=t_0(\alpha,\kappa,\beta,\|(\hat{u}_0,\hat{\tau}_0)\|_{L^\infty_\xi})$ and $c=c(c_2,C_2)$.
\end{lemma}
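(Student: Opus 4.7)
The plan is to establish pointwise Fourier lower bounds on $|\hat u(\xi,t)|$ and $|\hat\sigma(\xi,t)|$ in a shrinking low-frequency ball, then integrate against $|\xi|^{2k}$ and transfer the $\sigma$-bound to $\tau$. With $U^\sigma=(u,\sigma)^\top$ and $U^\sigma_0=(u_0,\sigma_0)^\top$, Duhamel's formula for the auxiliary system \eqref{u-sigma} reads in Fourier
\begin{equation*}
\hat U^\sigma(\xi,t)=\hat{\mathbb G}_{u,\sigma}(\xi,t)\hat U^\sigma_0(\xi)+\int_0^t\hat{\mathbb G}_{u,\sigma}(\xi,t-s)(\hat{\mathcal F}_1,\hat{\mathcal F}_2)^\top(\xi,s)\,ds=:\hat U^\sigma_L+\hat U^\sigma_N.
\end{equation*}
Since $|\hat\sigma_0|\le C|\hat\tau_0|$ pointwise by the definition of $\sigma$, the hypothesis $|\hat u_0|\ge c_0$ on $\{|\xi|\le R\}$ combined with Lemma \ref{lemma_lower} (after possibly shrinking $R$) yields $|\hat u_L(\xi,t)|\gtrsim c_0 e^{-\eta|\xi|^2 t}$ and $|\hat\sigma_L(\xi,t)|\gtrsim c_0|\xi|e^{-\eta|\xi|^2 t}-Ce^{-\beta t/2}$ on $\{|\xi|\le R\}$ for $t\ge t_2$; the refined bound \eqref{lemma_lower_eq2} is what supplies the extra $|\xi|$ factor on $\hat\sigma_L$.

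For the nonlinear sources, the divergence structure $u\cdot\nabla u=\mathrm{div}(u\otimes u)$ afforded by $\mathrm{div}\,u=0$ gives $|\hat{\mathcal F}_1(\xi,s)|\le C|\xi|\|u\|_{L^2}^2\lesssim|\xi|(1+s)^{-3/2}$, while the bounded Fourier symbol of the zero-order operator $\Lambda^{-1}\mathbb{P}\mathrm{div}$ in $\mathcal F_2$ gives $|\hat{\mathcal F}_2(\xi,s)|\lesssim\|u\|_{L^2}\|\nabla\tau\|_{L^2}+\|\nabla u\|_{L^2}\|\tau\|_{L^2}\lesssim(1+s)^{-5/2}$, using the upper decays from Part~(i) of Theorem \ref{theorem_decay}. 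Inserting these in the Duhamel integrand together with the Green-matrix bounds of Proposition \ref{theorem-Green} and Lemma \ref{lemma_lower}, one obtains $|\hat u_N(\xi,t)|\lesssim|\xi|$ on $\{|\xi|\le R\}$ and, crucially,
\begin{equation*}
|\hat\sigma_N(\xi,t)|\lesssim|\xi|^2\int_0^t e^{-\theta|\xi|^2(t-s)}(1+s)^{-5/2}\,ds+e^{-\beta t/2}\lesssim|\xi|^2+e^{-\beta t/2},
\end{equation*}
where the extra $|\xi|^2$ arises from applying the refined bound $|\mathcal G_2|\lesssim|\xi|^2 e^{-\theta|\xi|^2 t}+e^{-\beta t/2}$ of Lemma \ref{lemma_lower} to the $\mathcal F_2$ contribution.

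On the shrinking annulus $A_t=\{Ce^{-\beta t/4}\le|\xi|\le(1+t)^{-1/2}\}$ one has $e^{-\eta|\xi|^2 t}\ge e^{-\eta}$, and the estimates above combine to
\begin{equation*}
|\hat u(\xi,t)|\gtrsim c_0-C(1+t)^{-1/2}\gtrsim c_0,\qquad|\hat\sigma(\xi,t)|\gtrsim c_0|\xi|-C|\xi|^2-Ce^{-\beta t/2}\gtrsim c_0|\xi|
\end{equation*}
for $t\ge t_0$ sufficiently large. By Plancherel,
\begin{equation*}
\|\nabla^k u\|_{L^2}^2\ge\int_{A_t}|\xi|^{2k}|\hat u|^2\,d\xi\gtrsim\int_{A_t}|\xi|^{2k}\,d\xi\simeq(1+t)^{-3/2-k},
\end{equation*}
and similarly $\|\nabla^k\sigma\|_{L^2}^2\gtrsim(1+t)^{-5/2-k}$. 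This yields \eqref{nonlinear_lower_bound_1} for $k=0,1,2$, and via the pointwise bound $|\hat\sigma(\xi)|\le C|\hat\tau(\xi)|$ (hence $\|\nabla^k\sigma\|_{L^2}\le C\|\nabla^k\tau\|_{L^2}$) also \eqref{nonlinear_lower_bound_2} for $k=0,1$.

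The main obstacle is the estimate $|\hat\sigma_N|\lesssim|\xi|^2+e^{-\beta t/2}$. The nonlinear source $\mathcal F_2$ only decays like $(1+s)^{-5/2}$ in $L^1$, and with the naive heat-kernel bound $|\mathcal G_2|\lesssim e^{-\theta|\xi|^2 t}$ the Duhamel integral for $\hat\sigma_N$ would be bounded merely by a constant, comparable to (and thus not dominated by) the linear lower bound $|\hat\sigma_L|\gtrsim|\xi|$ on the shrinking ball. The refined estimate \eqref{lemma_lower_eq2} provides exactly the missing factor $|\xi|^2$, turning the match into a strict gain and allowing the nonlinear term to be absorbed.
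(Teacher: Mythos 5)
Your overall strategy (pointwise Fourier lower bounds at low frequency, with the refined estimate \eqref{lemma_lower_eq2} supplying the crucial extra factor of $|\xi|$ for the $\sigma$-component, then Plancherel and the transfer $\|\nabla^k\sigma\|_{L^2}\lesssim\|\nabla^k\tau\|_{L^2}$) is viable and close in spirit to the paper, which instead compares $L^2_{|\xi|\le R}$ norms of the linear and Duhamel parts (Lemma \ref{linear_lower_bound} plus the time-splitting $[0,t/2]\cup[t/2,t]$ in \eqref{nonlinear_lower_3}--\eqref{nonlinear_lower_4}). However, there is a genuine error in your key quantitative step: the claim $|\hat\sigma_N(\xi,t)|\lesssim|\xi|^2+e^{-\beta t/2}$. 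The contribution of the damping piece of $\mathcal G_2$ to the Duhamel integral is $\int_0^t e^{-\beta(t-s)/2}\,|\hat{\mathcal F}_2(\xi,s)|\,ds\lesssim\int_0^t e^{-\beta(t-s)/2}(1+s)^{-5/2}\,ds$, and this convolution is dominated by $s$ near $t$, so it is of size $(1+t)^{-5/2}$, not $e^{-\beta t/2}$. (Moreover \eqref{lemma_lower_eq2} is only valid for $t-s\ge t_1$; the range $t-s\le t_1$, where one can only use $|\mathcal G_2|\le K$, contributes another term of size $(1+t)^{-5/2}$.) The correct pointwise bound is therefore $|\hat\sigma_N|\lesssim|\xi|^2+(1+t)^{-5/2}$, with a $\xi$-independent, only polynomially small remainder.

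This matters because your annulus $A_t=\{Ce^{-\beta t/4}\le|\xi|\le(1+t)^{-1/2}\}$ was calibrated to the erroneous $e^{-\beta t/2}$ term: near the inner edge $|\xi|\sim e^{-\beta t/4}$ the linear signal $c_0|\xi|$ is exponentially small while the true nonlinear remainder $(1+t)^{-5/2}$ is only polynomially small, so the asserted pointwise bound $|\hat\sigma(\xi,t)|\gtrsim c_0|\xi|$ fails on a large portion of $A_t$, and the integral lower bound over $A_t$ as written does not follow. The repair is straightforward and does not change the rates: replace $e^{-\beta t/2}$ by $(1+t)^{-5/2}$ in the bound for $\hat\sigma_N$ and take an annulus whose inner radius is comparable to the outer one, e.g. $\tfrac12(1+t)^{-1/2}\le|\xi|\le(1+t)^{-1/2}$ (any inner radius well above $(1+t)^{-5/2}$ and of the form $o((1+t)^{-1/2})$ bounded below by a fixed fraction of the outer radius works). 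There $|\xi|\simeq(1+t)^{-1/2}$ dominates both $C|\xi|^2$ and $C(1+t)^{-5/2}$ for $t$ large, the pointwise bounds $|\hat u|\gtrsim c_0$ and $|\hat\sigma|\gtrsim c_0|\xi|$ hold, and the integrals $\int|\xi|^{2k}\,d\xi\simeq(1+t)^{-\frac32-k}$ and $\int|\xi|^{2k+2}\,d\xi\simeq(1+t)^{-\frac52-k}$ give \eqref{nonlinear_lower_bound_1} and \eqref{nonlinear_lower_bound_2}. With that correction your argument is a legitimate alternative to the paper's $L^2$-based comparison; note also that your constant $c$ then depends on $c_0$ as well, consistent with the statement.
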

\begin{proof}
	First of all, by using the Duhamel's principle, we have
	\begin{equation*}
	\begin{split}
	u(t) = &\mathbb{G}^1_{u,\sigma}(t)\ast U_0^\sigma
	+
	\int_0^t
	\mathbb{G}^1_{u,\sigma}(t-s)\ast \left(\mathcal{F}_1,\mathcal{F}_2\right)^\top(s)
	{\rm d}s,\\
	\sigma(t)
	=  &
	\mathbb{G}^2_{u,\sigma}(t)\ast U_0^\sigma
	+
	\int_0^t
	\mathbb{G}^2_{u,\sigma}(t-s)\ast \left(\mathcal{F}_1,\mathcal{F}_2\right)^\top(s)
	{\rm d}s.
	\end{split}
	\end{equation*}
	It is easy to see that
	\begin{equation}\label{nonlinear_lower_1}
	\begin{split}
	\|\nabla^ku(t)\|_{L^2} =& \left\||\cdot|^k\hat{u}(t)\right\|_{L^2} \geq \left\||\cdot|^k\hat{u}(t)\right\|_{L^2_{|\xi|\leq R}}\\
	\geq&  \left\||\cdot|^k\hat{\mathbb{G}}^1_{u,\sigma}(t) \hat{U}_0^\sigma\right\|_{L^2_{|\xi|\leq R}} -
	\int_0^t \left\||\cdot|^k\hat{\mathbb{G}}^1_{u,\sigma}(t-s) \left(\hat{\mathcal{F}}_1,\hat{\mathcal{F}}_2\right)^\top(s)\right\|_{L^2_{|\xi|\leq R}}
	{\rm d}s,
	\end{split}
	\end{equation}
	and
	\begin{equation}\label{nonlinear_lower_2}
	\begin{split}
	\|\nabla^k\sigma(t)\|_{L^2} \geq&  \left\||\cdot|^k\hat{\mathbb{G}}^2_{u,\sigma}(t) \hat{U}_0^\sigma\right\|_{L^2_{|\xi|\leq R}} -
	\int_0^t \left\||\cdot|^k\hat{\mathbb{G}}^2_{u,\sigma}(t-s) \left(\hat{\mathcal{F}}_1,\hat{\mathcal{F}}_2\right)^\top(s)\right\|_{L^2_{|\xi|\leq R}}
	{\rm d}s.
	\end{split}
	\end{equation}
Similar to the proof of Lemma \ref{linear_lower_bound}, we have
	\begin{equation}\label{linear_lower_fb}
	\left\||\cdot|^k\hat{\mathbb{G}}^1_{u,\sigma}(t) \hat{U}_0^\sigma\right\|_{L^2_{|\xi|\leq R}} \gtrsim (1 + t)^{-\frac{3}{4}-\frac{k}{2}},~~   \left\||\cdot|^k\hat{\mathbb{G}}^2_{u,\sigma}(t) \hat{U}_0^\sigma\right\|_{L^2_{|\xi|\leq R}}\gtrsim (1 + t)^{-\frac{5}{4}-\frac{k}{2}}.
	\end{equation}
Now we  bound the nonlinear term in \eqref{nonlinear_lower_1} for $k=0, 1, 2$. In fact,	using Proposition \ref{theorem-Green}, Part (i) in Theorem \ref{theorem_decay} and Lemma \ref{nonlinear-estimates}, we obtain
	\begin{equation}\label{nonlinear_lower_3}
	\begin{split}
	&  \int_0^t \left\||\cdot|^k\hat{\mathbb{G}}^1_{u,\sigma}(t-s)\left(\hat{\mathcal{F}}_1,\hat{\mathcal{F}}_2\right)^\top(s) \right\|_{L^2_{|\xi|\leq R}}	{\rm d}s \\
	= &\int_0^t \|\left(|\cdot|^k\mathcal{G}_3(t-s)-\epsilon|\cdot|^{2+k}\mathcal{G}_1(t-s)\right)\hat{\mathcal{F}}_1 (s) + \kappa|\cdot|^{1+k}\mathcal{G}_1(t-s)\hat{\mathcal{F}}_2 (s)\|_{L^2_{|\xi|\leq R}}	{\rm d}s \\
	\lesssim &\int_0^{\frac{t}{2}} \left(\left\|\frac{1}{|\cdot|} \hat{\mathcal{F}}_1 (s)\right\|_{L^\infty_\xi}   + \|\hat{\mathcal{F}}_2 (s)\|_{L^\infty_\xi}\right)\left(\int_{|\xi|\leq R} |\xi|^{2(1+k)}e^{-2\theta|\xi|^2(t-s )} {\rm d} \xi\right)^{\frac{1}{2}} {\rm d}s \\
	&+ \int_{\frac{t}{2}}^t \left(\|(\hat{\mathcal{F}}_1 , \hat{\mathcal{F}}_2) (s)\|_{L^2_\xi}\right)(1+t-s)^{-\frac{k}{2}} {\rm d}s \\
	\lesssim &\int_0^{\frac{t}{2}}(1+s)^{-\frac{3}{2}}(1+t-s)^{-\frac{5}{4}-\frac{k}{2}}{\rm d}s + \int_0^{\frac{t}{2}}(1+s)^{-\frac{11}{4}}(1+t-s)^{-\frac{k}{2}}{\rm d}s\lesssim (1+t)^{-\frac{5}{4}-\frac{k}{2}}.
	\end{split}
	\end{equation}

Next, we turn to bound the nonlinear term in \eqref{nonlinear_lower_2} for $k=0, 1$.Similar to \eqref{nonlinear_lower_3}, using Proposition \ref{theorem-Green}, Part (i) in Theorem \ref{theorem_decay}, Lemma \ref{nonlinear-estimates} and \eqref{lemma_lower_eq2}, one deduces that
	\begin{equation}\label{nonlinear_lower_4}
\begin{split}
&  \int_0^t \left\||\cdot|^k\hat{\mathbb{G}}_{u,\sigma}^2(t-s) \left(\hat{\mathcal{F}}_1,\hat{\mathcal{F}}_2\right)^\top(s) \right\|_{L^2_{|\xi|\leq R}}	{\rm d}s \\
= &\int_0^t \left\| -\frac{\alpha}{2}|\cdot|^{k+1}\mathcal{G}_1(t-s)\hat{\mathcal{F}}_1 (s)+|\cdot|^k\left(\mathcal{G}_2(t-s) +\epsilon |\cdot|^2\mathcal{G}_1(t-s)\right) \hat{\mathcal{F}}_2 (s)\right\|_{L^2_{|\xi|\leq R}}	{\rm d}s\\
\lesssim &\int_0^{\frac{t}{2}} \left(\left\|\frac{1}{|\cdot|}\hat{\mathcal{F}}_1 (s)\right\|_{L^\infty_\xi} + \|\hat{\mathcal{F}}_2 (s)\|_{L^\infty_\xi}\right) \left(\int_{|\xi|\leq R} (|\xi|^{2(k+2)}e^{-2\theta|\xi|^2(t-s )} + e^{-\beta (t-s)}) {\rm d} \xi \right)^{\frac{1}{2}}{\rm d}s  \\
&+\int_{\frac{t}{2}}^t \left(\|( \hat{\mathcal{F}}_1 ,\hat{\mathcal{F}}_2 )(s,\cdot)\|_{L^2_\xi}\right)(1+t-s)^{-\frac{k}{2}} {\rm d}s\\
\lesssim&\int_0^{\frac{t}{2}}(1+s)^{-\frac{3}{2}}(1+t-s)^{-\frac{7}{4}-\frac{k}{2}}{\rm d}s+\int_{\frac{t}{2}}^t(1+s)^{-\frac{11}{4}}\left((1+t-s)^{-\frac{k}{2}}+e^{-\beta (t-s)}\right){\rm d}s\\
\lesssim& (1+t)^{-\frac{7}{4}-\frac{k}{2}}.
\end{split}
\end{equation}
Collecting \eqref{nonlinear_lower_1},\eqref{nonlinear_lower_2}, \eqref{linear_lower_fb}, \eqref{nonlinear_lower_3} and \eqref{nonlinear_lower_4}, and using the fact that $\|\nabla^k\sigma\|_{L^2}\lesssim \|\nabla^k\tau\|_{L^2},$ we conclude that \eqref{nonlinear_lower_bound_1} and \eqref{nonlinear_lower_bound_2} hold.

\end{proof}

With Lemma \ref{nonlinear_lower_bound}, the proof of Part (ii) in Theorem \ref{theorem_decay} is finished. Thus the proof of Theorem \ref{theorem_decay} is complete.

\section{Appendix}\label{sec5}
\setcounter{equation}{0}
\subsection{More time-decay estimates for vanishing center-of-mass diffusion}
\begin{theorem}\label{theorem appendix}
Under the conditions of Theorem \ref{theorem_decay}, for Case II (i.e., $\epsilon>0$, $\mu\ge0$), we get the optimal time-decay estimate for the third-order derivative of the solution, namely,
 \begin{equation}\label{app1}
\sum_{2\le k\le3}\|(\nabla^3u,\nabla^k\tau)(t)\|_{L^2}\le C(1+t)^{-\frac{9}{4}},
  \end{equation}
  and
\begin{equation}\label{eq_lower_case2}
\|\nabla^3 u(t)\|\geq c(1+t)^{-\frac94},\|\nabla^2 \tau(t)\|\geq c(1+t)^{-\frac94},
\end{equation}
for all $t\geq t_0,$ where $C$ and $c$ are positive constants independent of $t$.
\end{theorem}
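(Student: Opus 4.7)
The proof separates into the upper-bound claim \eqref{app1} and the lower-bound claim \eqref{eq_lower_case2}, and both parts follow the scheme of Section \ref{sec4} applied one derivative higher, exploiting the fact that in Case II the $u$-equation carries the full viscous dissipation $\epsilon\Delta u$.

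For the upper bound, I would first handle the top-order pair $(\nabla^3 u,\nabla^3\tau)$ simultaneously. Consider the functional
\[
\mathcal{H}_\ast(t)=\tfrac{\alpha}{2}\|\nabla^3 u\|_{L^2}^2+\tfrac{\kappa}{2}\|\nabla^3\tau\|_{L^2}^2.
\]
Differentiating in $t$, the linear coupling terms $\kappa\langle\nabla^3\mathbb{P}\,\mathrm{div}\,\tau,\nabla^3 u\rangle$ and $\alpha\langle\nabla^3\mathbb{D} u,\nabla^3\tau\rangle$ cancel by the symmetry of $\tau$ as in Section \ref{sec3}, leaving the dissipation $\alpha\epsilon\|\nabla^4 u\|_{L^2}^2+\kappa\mu\|\nabla^4\tau\|_{L^2}^2$ together with the damping $\kappa\beta\|\nabla^3\tau\|_{L^2}^2$. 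Frequency splitting gives $\|\nabla^3 u\|_{L^2}^2\lesssim\|\nabla^4 u\|_{L^2}^2+\int_{|\xi|\le R}|\xi|^6|\hat u|^2\,d\xi$, so that after absorbing the nonlinear contributions via commutator estimates and Theorem \ref{theorem_decay}(i) one obtains
\[
\frac{d}{dt}\mathcal{H}_\ast(t)+c\,\mathcal{H}_\ast(t)\lesssim\int_{|\xi|\le R}|\xi|^6|\hat u(t)|^2\,d\xi+(\text{already-decaying terms}).
\]
Lemma \ref{nonlinear-estimates} with $k=3$, $p=1$, $q=\infty$ controls the low-frequency source by $(1+t)^{-9/2}$, and the remaining nonlinear terms decay at least as fast. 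Gronwall's lemma then delivers $\mathcal{H}_\ast(t)\lesssim(1+t)^{-9/2}$, i.e.\ $\|\nabla^3 u\|_{L^2},\|\nabla^3\tau\|_{L^2}\lesssim(1+t)^{-9/4}$. The bound for $\|\nabla^2\tau\|_{L^2}$ then follows by the damping trick of Lemma \ref{lem-upper_bound3}: applying $\nabla^2$ to $\eqref{system}_2$, pairing with $\nabla^2\tau$ as in \eqref{eq_tau} and inserting the newly established $\|\nabla^3 u\|_{L^2}^2\lesssim(1+t)^{-9/2}$ on the right, Gronwall yields $\|\nabla^2\tau\|_{L^2}\lesssim(1+t)^{-9/4}$.

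For the lower bound I would follow Lemma \ref{nonlinear_lower_bound} verbatim with $k=3$ for $u$ and $k=2$ for $\sigma$. The Duhamel representation for the auxiliary system \eqref{u-sigma} gives
\[
\|\nabla^3 u(t)\|_{L^2}\ge\bigl\||\xi|^3\hat{\mathbb{G}}_{u,\sigma}^1(t)\hat{U}_0^\sigma\bigr\|_{L^2_{|\xi|\le R}}-\int_0^t\bigl\||\xi|^3\hat{\mathbb{G}}_{u,\sigma}^1(t-s)(\hat{\mathcal{F}}_1,\hat{\mathcal{F}}_2)^\top\bigr\|_{L^2_{|\xi|\le R}}\,ds,
\]
and analogously for $\sigma$. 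Lemma \ref{linear_lower_bound} bounds the linear parts from below by $c(1+t)^{-9/4}$; the Duhamel contributions, split at $t/2$ and controlled with Proposition \ref{theorem-Green}, the refined bound \eqref{lemma_lower_eq2}, Lemma \ref{nonlinear-estimates} and the upper decay from part (i), turn out to be $O((1+t)^{-5/2})$, strictly smaller than the linear piece for $t$ large enough. This yields $\|\nabla^3 u(t)\|_{L^2}\ge c(1+t)^{-9/4}$, and the bound on $\|\nabla^2\tau\|_{L^2}$ follows by the same argument applied to $\sigma$ with $k=2$ together with the pointwise inequality $\|\nabla^2\sigma\|_{L^2}\le\|\nabla^2\tau\|_{L^2}$ coming from $\sigma=\Lambda^{-1}\mathbb{P}\,\mathrm{div}\,\tau$.

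The main obstacle, and the reason the theorem is limited to Case II, is the coercivity of $\mathcal{H}_\ast$: absorbing the low-frequency source into the energy relies on the genuine viscous dissipation $\alpha\epsilon\|\nabla^4 u\|_{L^2}^2$, which is absent when $\epsilon=0$. The same mechanism rules out the expected sharper rate $(1+t)^{-11/4}$ for $\|\nabla^3\tau\|_{L^2}$: strengthening the Gronwall step of the top-order $\tau$-estimate would require pointwise decay of $\|\nabla^4 u\|_{L^2}$, which the $H^3$ regularity of the solution does not supply.
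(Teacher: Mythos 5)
Your strategy is the same as the paper's (top-order energy with the cancellation and the absorption of $\|\tau\|_{L^\infty}\|\nabla^4u\|_{L^2}\|\nabla^3\tau\|_{L^2}$ into $\alpha\epsilon\|\nabla^4u\|_{L^2}^2$, frequency splitting plus Gronwall, the $\beta$-damping trick for $\nabla^2\tau$, and the Duhamel lower-bound argument with $k=3$ for $u$ and $k=2$ for $\sigma$), but there is a genuine gap at the decisive quantitative step of the upper bound. You assert that Lemma \ref{nonlinear-estimates} with $k=3$, $p=1$, $q=\infty$ controls the low-frequency source $\int_{|\xi|\le R}|\xi|^6|\hat u(t)|^2\,d\xi$ by $(1+t)^{-9/2}$. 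This is not true with that choice of exponents alone: by Part (i) of Theorem \ref{theorem_decay} the best available bound is $\|(\mathcal{M}_1,\mathcal{M}_2)(s)\|_{L^1}\lesssim\|u\|_{L^2}\|\nabla u\|_{L^2}+\cdots\lesssim(1+s)^{-2}$, and since $2<\tfrac94$ the Duhamel convolution $\int_0^t(1+t-s)^{-\frac94}(1+s)^{-2}\,ds$ is dominated by the contribution near $s=t$ and is only of order $(1+t)^{-2}$ (indeed it is bounded below by $c(1+t)^{-2}$). Hence your source term is only $O((1+t)^{-4})$, and the Gronwall step yields $\|\nabla^3(u,\tau)\|_{L^2}\lesssim(1+t)^{-2}$, which falls short of the claimed optimal rate $-\tfrac94$. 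Note that for $k\le 2$ this problem does not occur because $\tfrac34+\tfrac k2\le\tfrac74<2$; $k=3$ is exactly the crossover, which is why the appendix needs an extra device.

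The paper closes this gap (see the computation leading to \eqref{4.363}) by splitting the time integral in \eqref{4.3} at $s/2$: on $[0,s/2]$ it keeps the $L^1$ bound of the nonlinearity and pulls out the kernel factor $(1+s-s')^{-\frac94}\sim(1+s)^{-\frac94}$, while on $[s/2,s]$ it switches to the $L^2$--$L^2$ version of Lemma \ref{nonlinear-estimates} ($p=q=2$, kernel factor $(1+s-s')^{-\frac32}$, which is integrable in $s'$) combined with $\|(\mathcal{M}_1,\mathcal{M}_2)(s')\|_{L^2}\lesssim(1+s')^{-\frac52}$ from \eqref{F21}--\eqref{F22}. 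This gives the weighted low-frequency norm the rate $(1+t)^{-\frac94}$, hence the source decays like $(1+t)^{-\frac92}$ and the Gronwall argument produces \eqref{3rd}. With this modification inserted, the rest of your proposal is correct and essentially identical to the paper: the improvement of $\|\nabla^2\tau\|_{L^2}$ via the damping and the newly obtained decay of $\|\nabla^3u\|_{L^2}$, and the lower bounds, where the nonlinear Duhamel contributions are $O((1+t)^{-\frac52})$ and are beaten by the linear lower bound $c(1+t)^{-\frac94}$ for large $t$, using $\|\nabla^2\sigma\|_{L^2}\lesssim\|\nabla^2\tau\|_{L^2}$.
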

\pf
To get (\ref{app1}), applying $\nabla^3$ to (\ref{system}), and then multiplying (\ref{system})$_1$ by $\alpha\nabla^3 u$ and (\ref{system})$_2$ by $\kappa\nabla^3 \tau$, we have from integration by parts and the cancellation relation that
\begin{equation} \label{3.2}
\begin{split}
     &\frac12\frac{d}{dt}\left(\alpha\|\nabla^3u\|_{L^2}^2+\kappa\|\nabla^3\tau\|_{L^2}^2\right)+\alpha\epsilon\|\nabla^4 u\|_{L^2}^2+\kappa\mu\|\nabla^4\tau\|_{L^2}^2+\kappa\beta\|\nabla^3\tau\|_{L^2}^2
     \\ =&
     -\alpha<\nabla^3(u\cdot\nabla u),\nabla^3 u>-\kappa<\nabla^3(u\cdot\nabla \tau),\nabla^3\tau>
     \\ &
     +\kappa<\nabla^3Q(\nabla u,\tau),\nabla^3\tau>=\sum_{p=1}^3\mathcal{K}_p,
\end{split}
\end{equation} where the incompressible condition, the H\"older's inequality and estimates on the commutator imply that,
\begin{equation*}
\begin{split}
    \mathcal{K}_1&=-\alpha\left(<\nabla^3(u\cdot\nabla u),\nabla^3 u>-<(u\cdot\nabla)\nabla^3 u,\nabla^3 u>\right)
    \\ & \lesssim
    \alpha\left\|\nabla^3(u\cdot\nabla u)-(u\cdot\nabla)\nabla^3 u\right\|_{L^2}\|\nabla^3u\|_{L^2}
    \lesssim
    \alpha\|\nabla u\|_{L^\infty}\|\nabla^3 u\|_{L^2}^2
    \\ & \lesssim
    \alpha\delta\|\nabla^3 u\|_{L^2}^2.
    \\
    \mathcal{K}_2&=-\kappa\left(<\nabla^3(u\cdot\nabla \tau),\nabla^3 \tau>-<(u\cdot\nabla)\nabla^3 \tau,\nabla^3 \tau>\right)
    \\ & \lesssim
    \left\|\nabla^3(u\cdot\nabla \tau)-(u\cdot\nabla)\nabla^3 \tau\right\|_{L^2}\|\nabla^3\tau\|_{L^2}
    \\ & \lesssim
    \kappa\left(\|\nabla u\|_{L^\infty}\|\nabla^3 \tau\|_{L^2}+\|\nabla\tau\|_{L^\infty}\|\nabla^3 u\|_{L^2}\right)\|\nabla^3\tau\|_{L^2}
    \\ & \lesssim
    \kappa\delta\left(\|\nabla^3 u\|_{L^2}^2+\|\nabla^3 \tau\|_{L^2}^2\right).
    \\
    \mathcal{K}_3&\lesssim \kappa\left(\|\nabla u\|_{L^\infty}\|\nabla^3\tau\|_{L^2}+\|\tau\|_{L^\infty}\|\nabla^4 u\|_{L^2}\right)\|\nabla^3\tau\|_{L^2}
    \\ & \lesssim
    \kappa\delta\|\nabla^3\tau\|_{L^2}^2+\kappa\delta\|\nabla^4 u\|_{L^2}\|\nabla^3\tau\|_{L^2}
    \\ & \leq
    \frac12\alpha\epsilon\|\nabla^4 u\|_{L^2}^2+C\left(\kappa\delta+\alpha^{-1}\epsilon^{-1}\kappa^2\delta^2\right)\|\nabla^3\tau\|_{L^2}^2.
\end{split}
\end{equation*}
In conclusion, by choosing $\delta=\delta(\kappa,\alpha,\beta,\epsilon)$ small enough, we have
\begin{equation*}
\begin{split}
     &\frac12\frac{d}{dt}\left(\alpha\|\nabla^3u\|_{L^2}^2+\kappa\|\nabla^3\tau\|_{L^2}^2\right)+\frac12\alpha\epsilon\|\nabla^4 u\|_{L^2}^2+\kappa\mu\|\nabla^4\tau\|_{L^2}^2+\frac12\kappa\beta\|\nabla^3\tau\|_{L^2}^2
     \\ \leq &
    C\delta(\alpha+\kappa)\|\nabla^3 u\|_{L^2}^2.
\end{split}
\end{equation*} Then we have from a similar discussion as (\ref{4.34}) that
\begin{equation*}
\begin{split}
\frac{\rm d}{{\rm d}t}\|\nabla^3(u,\tau)\|_{L^2}^2+D_2\|\nabla^3(u,\tau)\|_{L^2}^2
\lesssim
\int_{|\xi|\leq R}
|\xi|^6|\hat u(t)|^2
{\rm d}\xi,
\end{split}
\end{equation*} for some positive constant $D_2=D_2(\alpha,\kappa,\beta,\epsilon)$. Then, it follows from Lemma \ref{nonlinear-estimates} that
\begin{equation}\label{4.363}
\begin{split}
\|\nabla^3(u,\tau)(t)\|_{L^2}^2
\lesssim
&
{\rm e}^{-D_2 t}
\|\nabla^3U_0\|_{L^2}^2
+
\int_0^t
{\rm e}^{-D_2(t-s)}
\left(
\int_{|\xi|\leq R}
|\xi|^6|\hat{u}(s)|^2
{\rm d}\xi
\right)
{\rm d}s\\
\lesssim
&
{\rm e}^{-D_2t}
\|\nabla^3U_0\|_{L^2}^2
+
\int_0^t
{\rm e}^{-D_2(t-s)}
(1+s)^{-\frac92}
\|\hat{U}_0\|_{L^\infty_\xi}^2
{\rm d}s
\\ &
+\int_0^t {\rm e}^{-D_2(t-s)}\Bigg[ \int_0^{\frac{s}{2}}(1+s-s^\prime)^{-\frac92}\left\|\left(\mathcal{M}_1,\mathcal{M}_2\right)^\top(s)\right\|_{L^1}{\rm d}s^\prime \\
&+ \int_{\frac{s}{2}}^s(1+s-s^\prime)^{-3}\left\|\left(\mathcal{M}_1,\mathcal{M}_2\right)^\top(s)\right\|_{L^2}{\rm d}s^\prime\Bigg]{\rm d}s
\\
\leq
&
C_2(1+t)^{-\frac{9}{2}},
\end{split}
\end{equation}
which implies that
\begin{equation}\label{3rd}
\|\nabla^3(u,\tau)(t)\|_{L^2}\leq C_2(1+t)^{-\frac94}.
\end{equation}

Finally, \eqref{3rd} enables us to improve the decay rate of $\|\nabla^2\tau\|_{L^2}$ in Case II. In fact, similar to \eqref{eq_tau}, and using \eqref{3rd}, we have
	 \begin{equation*}
	 	\begin{split}
	 	&\frac{1}{2}\frac{\mathrm{d}}{\mathrm{d}t} \|\nabla^2 \tau\|_{L^2}^2 + \mu \|\nabla^{3} \tau \|_{L^2}^2 + \frac{\beta}{2}\|\nabla^2 \tau\|_{L^2}^2\\
	 	\lesssim & \alpha \|\nabla^{3}u\|_{L^2}^2 + \|\nabla^2{Q}(\nabla u,\tau)\|_{L^2}^2 + \|\nabla^{2}(u\cdot \nabla\tau)\|_{L^2}^2\\
	 	\lesssim & \alpha \|\nabla^{3}u\|_{L^2}^2 + \|\nabla^{3}u\|_{L^2}^2\|\tau\|_{L^\infty}^2 + \|\nabla  u\|_{L^\infty}^2\|\nabla^2\tau\|_{L^2}^2\\&+ \|\nabla^{3}\tau\|_{L^2}^2\|u\|_{L^\infty}^2 + \|\nabla \tau\|_{L^\infty}^2\|\nabla^{2}u\|_{L^2}^2\\
	 	\lesssim & (1+ t)^{-\frac92}.
	 	\end{split}
	\end{equation*}
	Consequently,
	\begin{equation}\label{2nd-tau}
		\|\nabla^2 \tau\|_{L^2} \leq C_2 (1+t)^{-\frac{9}{4}}.
	\end{equation}

Next, we will justify (\ref{eq_lower_case2}). Taking $k =3$ in the nonlinear term of \eqref{nonlinear_lower_1}, we have
\begin{equation}\label{nonlinear_lower_u3}
\begin{split}
&  \int_0^t \||\xi|^3\hat{\mathbb{G}}^1_{u,\sigma}(t-s)\left(\hat{\mathcal{F}}_1,\hat{\mathcal{F}}_2\right)^\top(s) \|_{L^2(|\xi|\leq R)}	{\rm d}s \\
= &\int_0^t \|\left(|\cdot|^3\mathcal{G}_3(t-s)-\epsilon|\cdot|^{5}\mathcal{G}_1(t-s)\right)\hat{\mathcal{F}}_1 (s) + \kappa|\cdot|^{4}\mathcal{G}_1(t-s)\hat{\mathcal{F}}_2 (s)\|_{L^2(|\xi|\leq R)}	{\rm d}s \\
\lesssim &\int_0^{\frac{t}{2}} \left(\left\|\frac{1}{|\cdot|} \hat{\mathcal{F}}_1 (s)\right\|_{L^\infty_\xi}   + \|\hat{\mathcal{F}}_2 (\cdot,s)\|_{L^\infty_\xi}\right)\left(\int_{|\xi|\leq R} |\xi|^{8}e^{-2\theta|\xi|^2(t-s )} {\rm d} \xi\right)^{\frac{1}{2}} {\rm d}s \\
 &+\int_{\frac{t}{2}}^t \left(\|( \hat{\mathcal{F}}_1 ,\hat{\mathcal{F}}_2 )(\cdot,s)\|_{L^2_\xi}\right)(1+t-s)^{-\frac{3}{2}} {\rm d}s \\
\lesssim &\int_0^{\frac{t}{2}}(1+s)^{-\frac{3}{2}}(1+t-s)^{-\frac{11}{4}}{\rm d}s + \int_{\frac{t}{2}}^t(1+s)^{-\frac{5}{2}}(1+t-s)^{-\frac{3}{2}}{\rm d}s \lesssim (1+t)^{-\frac{5}{2}}.
\end{split}
\end{equation}
Then taking $k = 2$ in the nonlinear term of \eqref{nonlinear_lower_2}, we have
\begin{equation}\label{nonlinear_lower_4tau}
\begin{split}
&  \int_0^t \left\||\cdot|^2\hat{\mathbb{G}}_{u,\sigma}^2(t-s) \left(\hat{\mathcal{F}}_1,\hat{\mathcal{F}}_2\right)^\top(s) \right\|_{L^2_{|\xi|\leq R}}	{\rm d}s \\
= &\int_0^t \left\| -\frac{\alpha}{2}|\cdot|^3\mathcal{G}_1(t-s)\hat{\mathcal{F}}_1 (s)+\left(|\cdot|^2\mathcal{G}_2(t-s) +\epsilon |\cdot|^4\mathcal{G}_1(t-s)\right) \hat{\mathcal{F}}_2 (s)\right\|_{L^2_{|\xi|\leq R}}	{\rm d}s\\
\lesssim &\int_0^{\frac{t}{2}} \left(\left\|\frac{1}{|\cdot|}\hat{\mathcal{F}}_1 (s)\right\|_{L^\infty_\xi} + \|\hat{\mathcal{F}}_2 (s)\|_{L^\infty_\xi}\right) \left(\int_{|\xi|\leq R} (|\xi|^{6}e^{-2\theta|\xi|^2(t-s )} + e^{-\beta (t-s)}) {\rm d} \xi \right)^{\frac{1}{2}}{\rm d}s  \\
&+\int_{\frac{t}{2}}^t \left(\|( \hat{\mathcal{F}}_1 ,\hat{\mathcal{F}}_2 )(\cdot,s)\|_{L^2_\xi}\right)(1+t-s)^{-1} {\rm d}s\\
\lesssim&\int_0^{\frac{t}{2}}(1+s)^{-\frac{3}{2}}(1+t-s)^{-\frac{11}{4}}{\rm d}s+\int_{\frac{t}{2}}^t(1+s)^{-\frac{5}{2}}\left((1+t-s)^{-1}+e^{-\beta (t-s)}\right){\rm d}s\\
\lesssim& (1+t)^{-\frac{5}{2}}.
\end{split}
\end{equation}
Combining \eqref{nonlinear_lower_1},\eqref{nonlinear_lower_2},\eqref{linear_lower_fb},\eqref{nonlinear_lower_u3} and \eqref{nonlinear_lower_4tau}, using the fact that $\|\nabla^2\sigma\|_{L^2}\lesssim \|\nabla^2\tau\|_{L^2},$ we conclude that \eqref{eq_lower_case2} holds.

\section*{Acknowledgements}
J. Huang is partially supported by the National Natural Science Foundation of China (Grant No. 1197135\\7,  11771155 and 11571117), the Natural Science Foundation of Guangdong
Province (Grant No. 2019A1\\515011491), the Innovation Project of Department of Education of Guangdong Province of China (Grant No. 2019KTSCX183).
H. Wen is supported by the
National Natural Science Foundation of China (Grant No. 11722104, 11671150). R. Zi is partially  supported by the National Natural Science Foundation of China (Grant No. 11871236 and 11971193), the Natural Science Foundation of Hubei Province (Grant No. 2018CFB665), and the Fundamental Research Funds for the Central Universities (Grant No. CCNU19QN084).

\end{document}